	\newtheorem{thm}{Theorem}
	\newtheorem{lem}[thm]{Lemma}
	\newtheorem{cor}[thm]{Corollary}
	\newtheorem{prop}[thm]{Proposition} 
	\newtheorem*{definition*}{Definition}
	\newtheorem*{hyp}{Low Lying Zeros Hypothesis}
	\newtheorem*{shifted hyp E}{Hypothesis on Zeros for Twists of Elliptic Curves}
	\numberwithin{equation}{section}
	\numberwithin{thm}{section}
	\numberwithin{table}{section}
    \def\cF{{\mathcal F}}
\begin{document}

\title[Linear Combinations of Logarithms of $L$-functions over Function Fields]
{Linear Combinations of Logarithms of 
$L$-functions \\
over Function Fields 
 at Microscopic Shifts and Beyond
}

\author[F. \c{C}\.{\i}\c{c}ek] {Fatma \c{C}\.{\i}\c{c}ek}
\address{\.{I}stanbul, 34528, T\tiny{\"{U}}RK\.{I}YE}
\email{cicek.ftm@gmail.com}

\author[P. Darbar] {Pranendu Darbar}
\address{School of Mathematics and Statistics, University of New South Wales, Sydney NSW 2052, Australia}
\email{darbarpranendu100@gmail.com}

\author[A. Lumley]{Allysa Lumley}
\address{Department of Mathematics and Statistics, York University, N520 Ross,
	4700 Keele Street, Toronto, ON M3J 1P3, Canada}
\email{alumley2@yorku.ca}

\begin{abstract} 
In the function field setting with a fixed characteristic, it is known~\cite{DL} that the values $\log \big|L\big(\frac12, \chi_D\big)\big|$ as $D$ varies over monic and square-free polynomials are asymptotically Gaussian distributed on the assumption of a low lying zeros hypothesis as the degree of $D$ tends to $\infty$. For real distinct shifts $t_j$ all of microscopic size or all of nonmicroscopic size relative to the genus, we consider linear combinations of $\log\big|L\big(\frac12+it_j, \chi_D\big)\big|$ with real coefficients, and separately, of $\arg L\big(\frac12+it_j, \chi_D\big).$ We provide estimates for their distribution functions under the low lying zeros hypothesis. We similarly study distribution functions of linear combinations of $\log\big|L\big(\frac12+it_j, E\otimes \chi_D\big)\big|$, and separately $\arg L\big(\frac12+it_j, E\otimes\chi_D\big)$, for quadratic twists of elliptic curves $E$ with root number one as the conductor gets large. As an application, we prove a central limit theorem for the fluctuation of the number of nontrivial zeros of such $L$-functions from its mean, and thus recover previous results from  \cite{Faif} and \cite{FR}. Correlations of these fluctuations are in harmony with the results of~\cite{Bou}, \cite{CD} and \cite{Wie} for zeros of the Riemann zeta function and for eigenangles of unitary random matrices.
\end{abstract}

\keywords{Function fields, $L$-functions over function fields, Elliptic curves, Hyperelliptic curves, Zeros of $L$-functions}
\subjclass[2020]{Primary: 11M38, 11R59, Secondary: 11M50}

\maketitle

\section{Introduction}

Values of logarithms of $L$-functions play a fundamental role in analytic number theory, particularly for the zero distributions of  $L$-functions and moments of $L$-functions (see~\cite{Sound2009}).

Many a significant insight into the behavior of the logarithm of the Riemann zeta function on the critical line $\operatorname{Re}(s)=\tfrac12$ is provided through Selberg's central limit theorem, which states that as $t\to \infty$,
\[
 \frac{\log \zeta(\tfrac12+it)}{\sqrt{\log \log t}}\longrightarrow \mathcal{N},
\]
 where $\mathcal{N}$ is the standard complex Gaussian random variable. Selberg also extended his theorem to general $L$-functions~\cite{Selberg92}. 
 
 A natural object of further inquiry is the multidimensional distribution of logarithms of distinct $L$-functions. This brings about the study of the correlations of $L$-values of type 
 \begin{equation}\label{eq:log L values}
 \log L\big(\tfrac12+i(t+t_1)\big), \dots, \log L\big(\tfrac12+i(t+t_k)\big),
 \end{equation}
(further to be normalized as necessary) where $t_1, t_2, \dots, t_k$ are functions of $t$, as $t$ tends to $\infty$.

It was Bourgade \cite[Theorem 1.1]{Bou} who examined such local statistics for the Riemann zeta function in the case where the $t_j$ are close to one another. More precisely, Bourgade showed that for $\epsilon_t\to 0$ with $\epsilon_t\gg 1/\log t$ as $t\to\infty$, if the shifts are such that $0\leq t_1<\cdots< t_k<c<\infty$  as functions of $t$, and also 
  \[
  \frac{\log |t_{j_1}-t_{j_2}|}{\log \epsilon_t}\longrightarrow c_{j_1, j_2}\in [0, \infty]
  \quad \text{for all } \,  j_1\neq j_2,
  \]
then 
 \begin{align*}
 \frac{1}{\sqrt{-\log \epsilon_t}}\Big(\log \zeta\big(\tfrac12+\epsilon_t+i(t+t_1)\big), \ldots, \log \zeta\big(\tfrac12+\epsilon_t+i(t+t_k)\big)\Big) 
 \longrightarrow \big(\mathcal{N}_1, \ldots, \mathcal{N}_k\big).
 \end{align*}
Here, if $\epsilon_t\ll 1/\log t$, then the factor $-\log \epsilon_t$ is replaced by $\log \log t$.

Bourgade's proof uses Selberg’s approximation for $\frac{\zeta'}{\zeta}(s)$ (see \cite[p. 491]{Bou}) and an $L^2$-bound for 
\[
 \log \zeta\big(\tfrac12+\epsilon_t+i(t+t_j)\big)
 -\sum_{p\leq t}\frac{1}{p^{1/2+\epsilon_t+i(t+t_j)}}.
\]
These allow him to apply Slutsky’s lemma and the Cram\'{e}r--Wald device. He also used these results to observe the correlation structure in \eqref{correlation structure} for fluctuations of the number of zeta zeros from its expected value (see~\cite[Corollary 1.3]{Bou}). We note that this correlation result is the number-theoretic analogue to that of Diaconis and Evans~\cite{DE} in their work for fluctuations of the number of eigenangles of random unitary matrices from its expected value. Recently, extensions of Bourgade's result to Dirichlet $L$-functions and Dedekind zeta functions of quadratic fields were proven by Hsu and Wong~\cite{Hsu Wong}.

For this paper, we were motivated to study correlations of values in the form \eqref{eq:log L values} for of $L$-functions from symplectic or orthogonal families. A generalization of Selberg's theorem holds for logarithms of $L$-functions that form a unitary family. Since real parts of logarithms of distinct $L$-functions have approximately independent distributions on the critical line, one can obtain another central limit theorem for the distribution of linear combinations of such logarithms where coefficients are real numbers. This is a mirror image of the probabilistic fact that sums of independent Gaussian random variables are also Gaussian. This more general central limit theorem was cleverly exploited in \cite{Bom H} to prove that linear combinations of the $L$-functions themselves that share a functional equation, where the coefficients are again real, have a full proportion of all their nontrivial zeros on the critical line assuming suitable hypotheses on the zeros of individual $L$-functions including the Riemann hypothesis. A random matrices analogue for characteristic polynomials of independent random unitary matrices was later proven in \cite{Bar Hug}. 

However, adapting these arguments to the symplectic and orthogonal families of $L$-values is challenging even when one doesn't speak of linear combinations or shifts. This is due to the presence of low lying zeros, as predicted by Katz and Sarnak’s philosophy in \cite{KatzandSarnak, KatzSarnak2}. Establishing Selberg’s central limit theorem for these families requires a detailed understanding of the local statistics of their zeros, making the proof much more difficult. In the next sections, we will introduce two such families in the function field setting.

Inspired by the random matrix theory analogues, Keating and Snaith \cite{KeatingandSnaith} proposed conjectures about the asymptotic distribution of the real parts of logarithms of $L$-functions forming symplectic or orthogonal families in the spirit of Selberg's theorem. However, these conjectures remain out of reach, since the state of the art methods in analytic number theory have only been able to show that a positive proportion of critical $L$-values in a given family are nonzero. Moreover, even in the cases where the central value is known to be nonnegative, the logarithm remains highly sensitive to nearby low lying zeros.

In this paper, we explore the validity of a multidimensional version of Selberg's central limit theorem for symplectic and orthogonal families over function fields on the critical line where the  heights lie in three regimes to be categorized as one of microscopic, mesoscopic or macroscopic. Among important consequences are the emergence of the number of nontrivial zeros of the corresponding $L$-functions as Gaussian processes, and also the fact that correlations between these numbers match our expectations formed by corresponding results from the random matrix theory. In the next two sections, we define these symplectic and orthogonal families and present our results.


\subsection{Quadratic Dirichlet $L$-functions over Function Fields}\label{different regimes}


We begin by introducing key notations. Let $D\in \mathbb{F}_q[t]$  be a monic square-free polynomial, and define the associated primitive quadratic character using the Kronecker symbol $\left(\frac{D}{\cdot}\right)$.

The hyperelliptic ensemble $\mathcal{H}_{n,q}$, simplified as $\mathcal{H}_{n}$, is defined as
\[
\mathcal{H}_n :=\left\{ D\in \mathbb{F}_{q}[t] : \, D \text{ is monic, square-free, and} \, \deg(D)=n \right\}.
\] 
For each $D$ in $\mathcal{H}_n$, there is an associated hyperelliptic curve given by $C_D \,:\, y^2=D(t)$. Each such curve is nonsingular and of genus $g$ for 
\begin{align}\label{delta}
2g=n-1-\eta,
\end{align}
with
\begin{equation}\label{eq:eta}
\eta= \eta(D)=
\begin{cases}
1 \quad &\text{if } \,\, n=\operatorname{deg}(D) \text{ is even,}\\
0 \quad &\text{if } \,\, n=\operatorname{deg}(D) \text{ is odd.}
\end{cases}
\end{equation}
Note that $g\to \infty$ as $n$ does so. We will only work in the large degree aspect where $q$ fixed and $n$ varies. 


 Selberg's theorem for the family $\{L(\tfrac12, \chi_D)\}_{D\in \mathcal{H}_n}$ has been studied by the second and the third authors in \cite[Theorem 1.4]{DL}. They proved that as $D\in \mathcal{H}_n$ and $n\to \infty$,
\[
\frac{1}{\sqrt{\log n}}\left(\log\left| L\left(\tfrac{1}{2}, \chi_D\right)\right|-\tfrac{1}{2}\log n\right)  \longrightarrow N(0, 1)
\]
for the standard real Gaussian distribution $N(0,1)$, conditionally on the low lying zero hypothesis below.

\begin{hyp}
Let $\{\theta_{j, D}\}_{j=1}^{2g}$ be the eigenphases associated to a hyperelliptic curve $D$ of genus $g$. If $y=y(g)\to \infty$ then as $g\to \infty$,
	\[
	\frac{1}{| \mathcal{H}_{n}|}\#\Big\{D\in  \mathcal{H}_{n}: \min_{j} \big|\theta_{j, D}\big|<\frac{1}{yg}\Big\}=o(1).
	\]
\end{hyp}

We note that this hypothesis is motivated by the one-level density estimate \eqref{one level density},  as discussed in~\cite[Section~2.4]{DL}.

In our discussion, we want to study distributions of logarithms at points $\frac12+it$ for a real number $t$ away from the critical point. These shifts will provide insight into the statistical behavior of the logarithm of $L$-functions on the critical line where the height $t$ lies in various regimes.

We specify three regimes based on the size of this shift $t$ relative to the size of the genus $g$.

$\bullet$ {\it \bf{Microscopic regime:}} A shift 
$t$ is said to be in the {\it microscopic} if $|t|g<\infty \pmod{\frac{4\pi }{\log q}}$. This implies that $t$ is close to the height of a typical zero of $L(\frac12+it, \chi_D)$, meaning it is highly probable for $t$ to coincide with a zero. The behavior of the logarithm of $L$-functions in this regime is strongly influenced by low lying zeros. 

In~\cite[Theorem~2 and Eq.~(1.3)]{CCM}, the authors investigated the nonvanishing of various families of $L$-functions in the microscopic regime via one-level density estimates. We are thus justified in studying linear combinations of $L$-functions along microscopic shifts under the low lying zeros hypothesis.

$\bullet$ {\it \bf{Mesoscopic regime:}} A shift 
$t$ is {\it mesoscopic} if $|t|\to 0$ but $|t|g\to \infty \pmod{\frac{4\pi }{\log q}}$. In this regime, a low lying zeros hypothesis is not necessary to prove a result on the distribution of the logarithm.

$\bullet$ {\it \bf{Macroscopic regime:}}
A shift $t$ is said to be {\it macroscopic} if $|t|\in (0, 2\pi)$. Hence, $t$ is independent of the genus in this case.

Before stating our main results, we need to introduce further notations.

Let $k\geq 1$ be a fixed integer. For $\vec{a}=(a_1, \ldots, a_k), \vec{t}=(t_1, \ldots, t_k) \in \mathbb{R}^k, s\in \mathbb{C}$ and $D\in  \mathcal{H}_{n}$, let
\begin{equation}\label{def-L-linear combo}
\mathfrak{L}_{\vec{a}, \vec{t}}\left(s, \chi_D\right)
:=L\left(s+i t_1, \chi_D\right)^{a_1}\cdots L\left(s+it_k, \chi_D\right)^{a_k} .
\end{equation}
Additionally, we define terms to later stand for the mean of the real part of the logarithm
\begin{align}\label{mean for real}
\mathcal{M}(\vec{a}, \vec{t}, n)
:=\frac{a_1}{2} \log\Big(\min\big\{n, \tfrac{1}{2|t_1|}\big\}\Big)
+\cdots+\frac{a_k}{2} \log\Big(\min\big\{n, \tfrac{1}{2|t_k|}\big\}\Big) ,
\end{align}
and the variances of the real and imaginary parts of the distribution of the logarithm.
\begin{equation}\label{variance for real}
\begin{split}
 \mathcal{V}_{\operatorname{Re}}(\vec{a}, \vec{t}, n)
:=&\,
\frac12\Big(\sum_{j=1}^ka_j^2\Big)\log n
+\frac12 \sum_{j=1}^k a_j^2 \log\Big(\min\big\{n, \tfrac{1}{2|t_j|}\big\}\Big)
\\
&+\sum_{\substack{j_1, j_2=1\\j_1<j_2}}^k a_{j_1} a_{j_2}
 \log\Big( \min\big\{n, \tfrac{1}{|t_{j_1}-t_{j_2}|}\big\}\cdot 
\min\big\{n, \tfrac{1}{|t_{j_1}+t_{j_2}|}\big\}\Big).
\end{split}
\end{equation}
        \begin{equation}\label{variance for imaginary}
	\begin{split}
	\mathcal{V}_{\operatorname{Im}}(\vec{a}, \vec{t}, n)
	:=&\, \frac12\Big(\sum_{j=1}^k a_j^2\Big)\log n
	-\frac12 \sum_{j=1}^k a_j^2 \log \Big(\min\big\{n, \tfrac{1}{2|t_j|}\big\}\Big)
	\\
	&+\sum_{\substack{j_1, j_2=1\\j_1<j_2}}^k a_{j_1} a_{j_2} \log \Big(\min\big\{n, \tfrac{1}{|t_{j_1}-t_{j_2}|}\big\} \Big/\min\big\{n, \tfrac{1}{|t_{j_1}+t_{j_2}|}\big\}\Big).
\end{split}
\end{equation}
The following are our main unconditional results for the symplectic family \emph{near} the critical line.


\begin{thm}\label{Unconditional CLT for real near to half line:re} 
Fix $b\in\mathbb{R}$, and let $\sigma_0=\sigma_0(g)$ be a function of $g$, tending to $\frac12$ as $g\to \infty$ in such a way that $g \big(\sigma_0-\tfrac12\big)\to \infty$, but $g\big(\sigma_0-\tfrac12\big)=o\left(\sqrt{\log n}\right)$. Then for $D\in  \mathcal{H}_{n}$, 
	\[
	\frac{1}{| \mathcal{H}_n|}	
	\# \bigg\{ D\in  \mathcal{H}_n:
	 \frac{1}{\sqrt{ \mathcal{V}_{\operatorname{Re}}(\vec{a}, \vec{t}, n)}}
	\Big(\log\big|\mathfrak{L}_{\vec{a}, \vec{t}}\big(\sigma_0, \chi_D\big)\big| 
	-\mathcal{M}(\vec{a}, \vec{t}, n)\Big) >b\bigg\}
	 \longrightarrow \frac1{\sqrt{2\pi}}\int_b^\infty e^{-\frac{u^2}{2}}\mathop{du} 
	\]	
as $n\to \infty$. Here, $\mathfrak{L}_{\vec{a}, \vec{t}}$, $\mathcal{M}(\vec{a}, \vec{t}, n)$ and $\mathcal{V}_{\operatorname{Re}}(\vec{a}, \vec{t}, n)$ are defined by \eqref{def-L-linear combo}, \eqref{mean for real} and \eqref{variance for real} respectively.

Note that $\frac{1}{\sqrt{2 \pi}}\int_b^{\infty}e^{-\frac{u^2}{2}}du$ is the probability distribution function of the standard Gaussian at $b$.	
\end{thm}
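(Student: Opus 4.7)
The plan is to establish the CLT by the method of moments, after replacing $\log|\mathfrak L_{\vec a,\vec t}(\sigma_0,\chi_D)|-\mathcal M(\vec a,\vec t,n)$ by a short ``prime sum'' over monic irreducibles in $\mathbb F_q[t]$ and then applying the Cramér--Wold device. Because $\sigma_0$ sits strictly to the right of the critical line with $g(\sigma_0-\tfrac12)\to\infty$, the logarithm is insulated from any low lying zeros, which is why no hypothesis on zeros is required here.

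\textbf{Step 1 (Prime-sum approximation).} Using the Euler product for $L(s,\chi_D)$ together with the truncated explicit-formula estimates developed in \cite{DL}, I would show that for each $j$,
\[
\log\bigl|L(\sigma_0+it_j,\chi_D)\bigr|
=\mathrm{Re}\sum_{\substack{P \text{ monic irreducible}\\ \deg P\leq N}}\frac{\chi_D(P)}{|P|^{\sigma_0+it_j}}+E_j(D),
\]
at a truncation level $N\asymp n$, with a mean-square error $\frac{1}{|\mathcal H_n|}\sum_{D\in\mathcal H_n}|E_j(D)|^2=o(\mathcal V_{\mathrm{Re}})$ uniformly in $j$. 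The contribution of prime powers $P^m$ with $m\geq 2$ produces, after averaging, the mean $\mathcal M(\vec a,\vec t,n)$. Linearly combining with the $a_j$'s thus reduces the theorem to a CLT for the real part of the single random character sum $Y_D:=\mathrm{Re}\sum_{\deg P\le N}\frac{\chi_D(P)}{|P|^{\sigma_0}}\sum_j a_j |P|^{-it_j}$.

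\textbf{Step 2 (Mean and variance).} The standard orthogonality for quadratic characters over $\mathcal H_n$ used in \cite{DL} makes $\frac{1}{|\mathcal H_n|}\sum_{D\in\mathcal H_n}\chi_D(f)$ negligible unless $f$ is a square. In the second moment of $Y_D$ this forces $P_1=P_2$ and yields
\[
\frac{1}{|\mathcal H_n|}\sum_{D\in\mathcal H_n} Y_D^2 \;=\; \tfrac12\!\!\sum_{\deg P\leq N}\!\!\frac{1}{|P|^{2\sigma_0}}\Bigl|\sum_j a_j|P|^{-it_j}\Bigr|^2+\tfrac12\!\!\sum_{\deg P\leq N}\!\!\frac{\mathrm{Re}}{|P|^{2\sigma_0}}\Bigl(\sum_j a_j|P|^{-it_j}\Bigr)^{\!2}+o(\mathcal V_{\mathrm{Re}}).
\]
Expanding both brackets, invoking the prime polynomial theorem $\#\{P:\deg P=m\}=q^m/m+O(q^{m/2}/m)$ and summing the resulting geometric series in $m$, each cross term $|P|^{-i(t_{j_1}\mp t_{j_2})}$ contributes $\log\min\{n,1/|t_{j_1}\mp t_{j_2}|\}$, while each ``diagonal'' $j_1=j_2$ contributes $\tfrac12\log n$ from the first bracket and $\tfrac12\log\min\{n,1/(2|t_j|)\}$ from the second. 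Collecting these recovers $\mathcal V_{\mathrm{Re}}(\vec a,\vec t,n)$ exactly.

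\textbf{Step 3 (Higher moments) and main obstacle.} To upgrade to the full Gaussian limit I would compute $\frac{1}{|\mathcal H_n|}\sum_D Y_D^{2r}$ for each fixed $r$ and match it against $(2r-1)!!\,\mathcal V_{\mathrm{Re}}^{\,r}$. Expanding $Y_D^{2r}$ into a $2r$-fold prime sum, the squareness constraint from orthogonality forces the $2r$ primes to pair off; the perfect-matching configurations produce the $(2r-1)!!$ factor, while unmatched or triple-collision configurations are $O(\mathcal V_{\mathrm{Re}}^{\,r-1/2})$ after bounding by the prime polynomial theorem. Odd moments vanish by the same pairing argument. The principal technical obstacle is the variance computation of Step 2: all four logarithmic quantities in $\mathcal V_{\mathrm{Re}}$ must emerge with the correct $\min\{n,\cdot\}$ cut-offs arising as the natural saturation point of the geometric sums once $m|t|\gtrsim 1$. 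The presence of the $|t_{j_1}+t_{j_2}|$ terms is a hallmark of the symplectic symmetry (since $\chi_D$ is real, both $\chi_D(P)^2$ and $|\chi_D(P)|^2$ survive averaging), and tracking this cleanly while remaining uniform across the microscopic and non-microscopic regimes for $\vec t$ is the delicate point of the argument.
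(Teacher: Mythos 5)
Your proposal follows essentially the same route as the paper: approximate $\log|L(\sigma_0+it_j,\chi_D)|$ by a short Dirichlet polynomial over irreducibles (with the $P^2$ contribution producing the deterministic mean $\mathcal M$), control the error in mean-square using orthogonality, compute variances and covariances of the prime sums via the prime polynomial theorem and a cosine-sum lemma to recover the $\min\{n,\cdot\}$ cut-offs and the $|t_{j_1}\pm t_{j_2}|$ structure, and conclude by the method of moments with the pairing argument. This is exactly the paper's strategy through Proposition~\ref{logL-as-PX}, Lemma~\ref{moments of lambda polyl}, and Propositions~\ref{moment computation 1 re} and \ref{Gaussian for real part of DP}, modulo minor imprecisions in your sketch (the truncation level is more precisely $X=c/(\sigma_0-\tfrac12)=o(n)$ with $\log X=\log n+O(\log\log n)$, and the orthogonality introduces factors $|P|/(|P|+1)$ that you omit but which only affect the $O(1)$ term).
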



\begin{thm}\label{Unconditional CLT for real near to half line:im}
Let $\sigma_0$ tend to $\tfrac12$ as $g\to \infty$ in such a way that $g\big( \sigma_0-\tfrac12\big)\to \infty$ but $g\big(\sigma_0-\tfrac12\big)=o\left(\sqrt{\log n}\right)$. Then for a real number $b$, 
	\[
	\frac{1}{| \mathcal{H}_{n}|}\# \bigg\{D\in  \mathcal{H}_n: \frac{\arg{\mathfrak{L}_{\vec{a}, \vec{t}}\big(\sigma_0, \chi_D \big)}}
	{\sqrt{ \mathcal{V}_{\operatorname{Im}}(\vec{a}, \vec{t}, n)}} > b
	\bigg\}
    \longrightarrow \frac1{\sqrt{2\pi}}\int_b^\infty e^{-\frac{u^2}{2}}\mathop{du}  \quad \text{as} \quad n\to \infty,
	\]
where $\mathfrak{L}_{\vec{a}, \vec{t}}$ and $\mathcal{V}_{\operatorname{Im}}(\vec{a}, \vec{t}, n)$ are defined by \eqref{def-L-linear combo} and \eqref{variance for imaginary} respectively. 	
\end{thm}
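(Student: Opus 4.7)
The plan is to mirror the strategy of Theorem~\ref{Unconditional CLT for real near to half line:re}, replacing the real part $\log|L|$ with $\arg L$ and tracking how the switch from cosines to sines modifies the variance structure. The crucial enabling fact is that $g(\sigma_0-\tfrac12)\to\infty$: the point $\sigma_0+it_j$ sits to the right of every low-lying zero at distance $\gg 1/g$, so the Euler product converges well enough to approximate $\arg L$ by a short Dirichlet polynomial over monic irreducibles, without invoking the Low Lying Zeros Hypothesis.

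Concretely, one expands
\[
\arg L(\sigma_0+it_j,\chi_D)
=-\operatorname{Im}\sum_{P\text{ monic irred}}\sum_{k\ge 1}\frac{\chi_D(P)^k}{k\,|P|^{k(\sigma_0+it_j)}},
\]
and shows via the standard truncated-Euler-product / approximate-functional-equation machinery that the tail $\deg P>X$ together with the $k\ge 2$ contributions is negligible in $L^2$-average over $D\in\mathcal{H}_n$ for a suitable cutoff $X=X(g,n)$. This reduces the task to proving that the short linear sum
\[
S(D):=-\sum_{\substack{P\text{ irred}\\ \deg P\le X}}\frac{\chi_D(P)}{|P|^{\sigma_0}}\sum_{j=1}^k a_j\sin\bigl(t_j\deg(P)\log q\bigr)
\]
is asymptotically Gaussian with variance $\mathcal{V}_{\operatorname{Im}}(\vec{a},\vec{t},n)$.

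The joint moments of $S(D)$ averaged over $D\in\mathcal{H}_n$ are then evaluated via quadratic-character orthogonality in $\mathbb{F}_q[t]$: the sum $\sum_{D\in\mathcal{H}_n}\chi_D(A)$ vanishes for non-square $A$ up to an acceptable error, which forces Wick pair-matchings of the primes and yields moment convergence to a centered Gaussian. For the variance, the product-to-sum identity
\[
\sin(t_{j_1}u)\sin(t_{j_2}u)=\tfrac12\bigl(\cos((t_{j_1}-t_{j_2})u)-\cos((t_{j_1}+t_{j_2})u)\bigr)
\]
together with the prime polynomial theorem $\#\{P:\deg P=\ell\}=q^\ell/\ell+O(q^{\ell/2}/\ell)$ reduces each second-moment contribution to a sum of the form $\sum_{\ell\le X}\cos(\ell\alpha\log q)/\ell=\log\min(n,1/|\alpha|)+O(1)$, with $\alpha\in\{2t_j,\ t_{j_1}\pm t_{j_2}\}$. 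Substituting these in produces exactly \eqref{variance for imaginary}; the characteristic minus sign in front of the $(t_{j_1}+t_{j_2})$ contribution is a direct consequence of the minus sign in the product-to-sum identity, which is precisely what distinguishes $\mathcal{V}_{\operatorname{Im}}$ from $\mathcal{V}_{\operatorname{Re}}$, while the absence of a centering term reflects the fact that $\mathbb{E}[\chi_D(P)]\approx 0$ makes the mean of $\arg L$ automatically negligible.

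The principal obstacle is the $L^2$-control of $\arg L(\sigma_0+it_j,\chi_D)-\text{(short prime sum)}$: unlike $\log|L|$, the argument is only defined modulo $2\pi$ and jumps at zeros, so the passage from the Euler product to a short Dirichlet polynomial must be made uniform in the shifts $t_j$ across the microscopic, mesoscopic and macroscopic regimes. The buffer $\sigma_0-\tfrac12\gg 1/g$ keeps $\sigma_0+it_j$ at a safe distance from low-lying zeros, and combined with a Hadamard-type factorization of $L(s,\chi_D)$ over its zeros, this should deliver the required uniform estimate; the condition $g(\sigma_0-\tfrac12)=o(\sqrt{\log n})$ then ensures that the bias introduced by this rightward shift is absorbed into the normalization $\sqrt{\mathcal{V}_{\operatorname{Im}}(\vec{a},\vec{t},n)}\asymp\sqrt{\log n}$ and vanishes in the limit.
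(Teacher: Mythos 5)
Your proposal matches the paper's approach: the paper proves this theorem exactly as Theorem~\ref{Unconditional CLT for real near to half line:re}, by combining Proposition~\ref{logL-as-PX} (Selberg's weighted Dirichlet-polynomial approximation of $\log L$ at $\sigma_0=\tfrac12+\tfrac cX$) with the moment computation in Proposition~\ref{Gaussian for imaginary part of DP}, whose variance comes precisely from the $\sin\sin$ product-to-sum identity you wrote down, and the error terms are controlled in $L^2$ via Lemmas~\ref{moments of lambda polyl} and~\ref{moments of lambda polyl2}. The one point where you overstate the difficulty is your "principal obstacle": since $\sigma_0>\tfrac12$ strictly, $L(\sigma_0+it_j,\chi_D)\neq 0$ always (all zeros lie on $|u|=q^{-1/2}$), so $\arg L$ is defined unambiguously by integrating $\operatorname{Im}(L'/L)$ from $\sigma=\infty$ to $\sigma_0$, there are no branch jumps, and the paper's explicit-formula argument (Lemma~\ref{bnd-sum-zeros}) supplies the required control without any extra uniformity issue.
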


On the critical line, we will prove three theorems for $t$ in different regimes. We say that a $k$-dimensional vector $\vec{t}$ lies in a given regime only if all of its coordinates lie in that regime. Our next theorem is conditional. 


\begin{thm}\label{main theorem conditional microscopic}
Let $\vec{t}=(t_1, \ldots, t_k)$ belong to the microscopic regime and $b\in \mathbb{R}$. Assume that the low lying zeros hypothesis is true. Then
	\[
	\frac{1}{| \mathcal{H}_n|}	
	\# \bigg\{ D\in  \mathcal{H}_n: \frac{1}{\sqrt{ \mathcal{V}_{\operatorname{Re}}(\vec{a}, \vec{t}, n)}}
	\Big(\log\big|\mathfrak{L}_{\vec{a}, \vec{t}}\big(\tfrac12, \chi_D\big)\big| -\mathcal{M}(\vec{a}, \vec{t}, n)\Big) >b\bigg\}
	 \longrightarrow \frac1{\sqrt{2\pi}}\int_b^\infty e^{-\frac{u^2}{2}}\mathop{du}.
	\]	
\end{thm}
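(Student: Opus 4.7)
I would deduce Theorem~\ref{main theorem conditional microscopic} from the unconditional Theorem~\ref{Unconditional CLT for real near to half line:re} by approximating $L(\tfrac12+it_l,\chi_D)$ by $L(\sigma_0+it_l,\chi_D)$ for $\sigma_0$ slightly to the right of the critical line. Fix a sequence $\xi_n\to\infty$ with $\xi_n=o(\sqrt{\log n})$ (for concreteness $\xi_n=\log\log n$) and put $\sigma_0=\tfrac12+\xi_n/g$; then $\sigma_0$ meets the hypotheses of Theorem~\ref{Unconditional CLT for real near to half line:re}, which supplies the Gaussian limit at $\sigma_0$ with the same centering $\mathcal{M}(\vec{a},\vec{t},n)$ and variance $\mathcal{V}_{\operatorname{Re}}(\vec{a},\vec{t},n)$. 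Since these two quantities do not depend on $\sigma_0$, Slutsky's theorem reduces matters to proving
\[
\Delta_D:=\log\bigl|\mathfrak{L}_{\vec{a},\vec{t}}(\tfrac12,\chi_D)\bigr|-\log\bigl|\mathfrak{L}_{\vec{a},\vec{t}}(\sigma_0,\chi_D)\bigr|=o_{\mathbb{P}}\bigl(\sqrt{\mathcal{V}_{\operatorname{Re}}(\vec{a},\vec{t},n)}\bigr).
\]

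\textbf{Comparison via eigenphases.} Using $L(s,\chi_D)=\prod_{j=1}^{2g}(1-e^{i\theta_{j,D}}q^{1/2-s})$, set $r=\xi_n/g$ and $\phi_{l,j}=\theta_{j,D}-t_l\log q$. A direct computation yields
\[
\Delta_D=\sum_{l=1}^k\frac{a_l}{2}\sum_{j=1}^{2g}\log\frac{4\sin^2(\phi_{l,j}/2)}{(1-q^{-r})^2+4q^{-r}\sin^2(\phi_{l,j}/2)}.
\]
I would split each inner sum at the threshold $|\phi_{l,j}|\asymp r\log q$. For \emph{far} eigenphases with $|\phi_{l,j}|\gg r\log q$, each summand is $O\bigl((r\log q)^2/\phi_{l,j}^2\bigr)$; using the bulk spacing $\sim 1/g$ between consecutive eigenphases, a dyadic count produces a total far contribution of size $O(\xi_n)$ deterministically in $D$. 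The \emph{close} eigenphases with $|\phi_{l,j}|\lesssim r\log q$ number at most $O(\xi_n)$ and each contributes $\approx 2\log(|\phi_{l,j}|/(r\log q))\le 0$; bounding this from below is where the low lying zeros hypothesis enters.

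\textbf{Use of LLZ and main obstacle.} Choosing $y=y_n\to\infty$ slowly relative to $\xi_n$, the LLZ hypothesis yields a set of $D$'s of density $1-o(1)$ on which $\min_j|\theta_{j,D}|\ge 1/(y_ng)$. Combined with the microscopic estimate $|t_l\log q|=O(1/g)$ and the triangle inequality, this gives a usable lower bound on $|\phi_{l,j}|$ in the close regime, so the close contribution is $O(\xi_n\log(y_n\xi_n))=o(\sqrt{\log n})$ on the good event. Since $\mathcal{V}_{\operatorname{Re}}(\vec{a},\vec{t},n)\asymp\log n$ in the microscopic regime (under the usual nondegeneracy on $\vec{a}$), this forces $\Delta_D/\sqrt{\mathcal{V}_{\operatorname{Re}}}\to 0$ in probability, and Slutsky closes the argument. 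The principal obstacle is this last lower bound: the stated LLZ hypothesis controls the distance from $\theta=0$, whereas what is really needed is the distance from the shifted point $t_l\log q$, which itself can be of order $1/g$. Transferring the bound between these two statistics without inflating the exceptional set beyond $o(1)$ is the delicate step, presumably handled by exploiting the strong symplectic repulsion near $\theta=0$ in a slightly enlarged window around the symmetry point.
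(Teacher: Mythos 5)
Your route matches the paper's: deduce the theorem by combining the off-critical-line CLT (Theorem~\ref{Unconditional CLT for real near to half line:re}) with a discrepancy bound showing $\log|\mathfrak{L}_{\vec a,\vec t}(\tfrac12,\chi_D)|-\log|\mathfrak{L}_{\vec a,\vec t}(\sigma_0,\chi_D)|$ is $o_{\mathbb P}(\sqrt{\log n})$ under the LLZ hypothesis; in the paper this is Proposition~\ref{difference between shift of logarithm} and Corollary~\ref{difference between logarithm micro}, and the final step is precisely your Slutsky argument. The implementations of the discrepancy bound differ in texture: your self-contained eigenphase far/close decomposition versus the paper's passage through the zero-sum identity \eqref{eq:at sigma0 at onehalf} and the bound \eqref{LprimeL-in primes} together with Lemma~\ref{moments of lambda polyl2}, following~\cite{DL}. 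The obstacle you flag is well placed: the proof of Proposition~\ref{difference between shift of logarithm} invokes LLZ to produce a good set on which the implication $\min_j|\theta_{j,D}|>\tfrac{1}{yX}\implies\min_j\bigl|\theta_{j,D}+\tfrac{t\log q}{2\pi}\bigr|>\tfrac{1}{yX}$ is asserted without further comment, and since the microscopic shift $\tfrac{t\log q}{2\pi}$ is of the same scale $\asymp 1/g$ as the eigenphase spacing, a triangle inequality does not give this. What is actually needed is a shifted low-lying-zeros statement; for a fixed nonzero scaled shift this would follow from the one-level density estimate \eqref{one level density} plus Markov rather than from symplectic repulsion at the origin, so your suggested resolution via "repulsion in an enlarged window around the symmetry point" is not quite the right mechanism, but your recognition that the stated LLZ hypothesis alone does not directly yield the shifted bound is accurate.
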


The second theorem, in contrast, is unconditional. It concerns a single $L$-function with a shift in the microscopic regime. 

\begin{thm}\label{mild conditioning lower bound}
	Assume that $t=\frac{2\pi\alpha}{n\log q}$ with $0\leq \alpha< \infty$, so that $t$ belongs to the microscopic regime. Let $b$ be a real number. Then as $n\to \infty$, 
	\begin{align*}
	 \frac{r(\alpha)}{\sqrt{2 \pi}}\int_b^{\infty}e^{-\frac{u^2}{2}}du+o_b(1)
	& \leq 
	 \frac1{| \CMcal{H}_{n}| }\#\bigg\{D\in  \CMcal{H}_{n} : 
	\frac{\log \left|L\left(\tfrac{1}{2}+i\frac{2\pi\alpha}{n\log q}, \chi_D\right)\right|
	-	\tfrac{ \log n}2}{\sqrt{\log n}}
	>b \bigg\} 
	\\
	& \leq  \frac{1}{\sqrt{2 \pi}}\int_b^{\infty}e^{-\frac{u^2}{2}}du+o_b(1),
	\end{align*}
where $r(\alpha)$ is defined by \eqref{values of nonvanishing} and it has the maximum  $\lim_{\alpha\to 0^+}r(\alpha)=0.9427\ldots$.
\end{thm}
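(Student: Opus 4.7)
The plan is to reduce to Theorem~\ref{Unconditional CLT for real near to half line:re} by comparing $\log|L(\tfrac12+it,\chi_D)|$ to $\log|L(\sigma_0+it,\chi_D)|$ for a slight shift $\sigma_0 = \tfrac12 + w(g)/g$, where $w(g)\to\infty$ is chosen so slowly that $w(g)=o(\sqrt{\log n})$. In the microscopic regime with $k=1$, $a_1=1$, $t_1 = \frac{2\pi\alpha}{n\log q}$, the quantities in \eqref{mean for real} and \eqref{variance for real} reduce to $\mathcal{M}(\vec a,\vec t, n) = \tfrac12\log n + O(1)$ and $\mathcal{V}_{\operatorname{Re}}(\vec a,\vec t, n) = \log n + O(1)$, so Theorem~\ref{Unconditional CLT for real near to half line:re} yields that $(\log|L(\sigma_0+it,\chi_D)|-\tfrac12\log n)/\sqrt{\log n}$ has the standard Gaussian limiting distribution.

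Using Weil's Riemann hypothesis over function fields, write $L(s,\chi_D)=\prod_{j=1}^{2g}(1-q^{1/2}e^{i\theta_{j,D}}q^{-s})$. Setting $\phi_j=\theta_{j,D}-t\log q$ and $\epsilon = \sigma_0-\tfrac12$, the elementary identity
\[
|1-q^{-\epsilon}e^{i\phi}|^2 - q^{-\epsilon}|1-e^{i\phi}|^2 = (1-q^{-\epsilon})^2 \geq 0
\]
yields, after taking logarithms and summing over all $2g$ zeros,
\[
\log|L(\tfrac12+it,\chi_D)| \leq \log|L(\sigma_0+it,\chi_D)| + g\epsilon\log q = \log|L(\sigma_0+it,\chi_D)| + o(\sqrt{\log n}).
\]
Combining this deterministic pointwise inequality with the Gaussian limit above immediately produces the upper bound stated in Theorem~\ref{mild conditioning lower bound}.

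For the lower bound I would introduce a good set $\mathcal{H}_n^* \subseteq \mathcal{H}_n$ of those $D$ whose eigenangles $\theta_{j,D}$ all stay at a prescribed distance from $t\log q = 2\pi\alpha/n \pmod{2\pi}$. The threshold must be picked large enough that, on $\mathcal{H}_n^*$, the quantity $\tfrac12\sum_j\log(1+\sinh^2(\epsilon\log q/2)/\sin^2(\phi_j/2))$ controlling the deficit $\log|L(\sigma_0+it,\chi_D)|-\log|L(\tfrac12+it,\chi_D)|$ is $o(\sqrt{\log n})$, so that $\log|L(\tfrac12+it,\chi_D)| = \log|L(\sigma_0+it,\chi_D)|+o(\sqrt{\log n})$ holds on $\mathcal{H}_n^*$; yet small enough that a symplectic one-level-density analysis in the spirit of \cite{CCM} yields $|\mathcal{H}_n^*|/|\mathcal{H}_n| \to r(\alpha)$ as defined by \eqref{values of nonvanishing}.

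The final and most delicate step is an asymptotic independence statement: the proportion of $D\in\mathcal{H}_n^*$ for which $(\log|L(\sigma_0+it,\chi_D)|-\tfrac12\log n)/\sqrt{\log n}>b$ equals $r(\alpha)$ times the unrestricted proportion, producing the factor $r(\alpha)\cdot\frac{1}{\sqrt{2\pi}}\int_b^\infty e^{-u^2/2}\,du$. I would attack this by approximating $\log|L(\sigma_0+it,\chi_D)|$ via an explicit-formula Selberg sum supported on primes of bounded degree, since such a sum is largely uncorrelated with the purely local nearest-zero condition defining $\mathcal{H}_n^*$. Making this decoupling rigorous via joint moment computations, of the sort underlying Theorem~\ref{Unconditional CLT for real near to half line:re}, is what I expect to be the main technical obstacle.
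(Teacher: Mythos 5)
Your upper-bound argument coincides with the paper's. Both exploit the factorization of $L$ over its zeros (via Weil) together with the elementary pointwise inequality $|1-q^{-\epsilon}e^{i\phi}|\geq q^{-\epsilon/2}|1-e^{i\phi}|$, summed over the $2g$ eigenangles, to obtain the deterministic bound $\log|L(\tfrac12+it,\chi_D)|\leq\log|L(\sigma_0+it,\chi_D)|+g(\sigma_0-\tfrac12)\log q+O(1)$; the paper's coefficient $\tfrac{\kappa_{\mathcal F}(\sigma_0-\frac12)\log q}{2}$ with $\kappa_{\mathcal F}=2g$ is exactly your $g\epsilon\log q$. Your reduction of $\mathcal{M}$ and $\mathcal{V}_{\operatorname{Re}}$ to $\tfrac12\log n+O(1)$ and $\log n+O(1)$ in the microscopic regime is also correct, so the invocation of Theorem~\ref{Unconditional CLT for real near to half line:re} is legitimate.

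For the lower bound, your framework has the same shape as the paper's — restrict to a good subfamily of proportion at least $r(\alpha)$ and apply the Gaussian law on it — but there are two genuine gaps. First, you take as good set the subfamily $\mathcal{H}_n^*$ of $D$ whose eigenangles all stay a prescribed positive distance from $t\log q$, and you assert $|\mathcal{H}_n^*|/|\mathcal{H}_n|\to r(\alpha)$. But Corollary~\ref{no-vanishing result}, which is what \cite{CCM} together with Rudnick's one-level density \eqref{one level density} actually delivers, is a \emph{lower bound} on the proportion of $D$ with $L(\tfrac12+it,\chi_D)\neq 0$, not an equality and not a distance-quantitative statement. Upgrading nonvanishing at a point to ``no eigenangle within $\delta$'' with only $o(1)$ loss of density is, unconditionally, precisely what the low lying zeros hypothesis supplies, and it is unavailable in this theorem. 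The paper's own proof sidesteps this by taking the good set to be $\mathcal{B}=\{D:L(\tfrac12+it,\chi_D)\neq 0\}$ directly and using $|\mathcal{B}|/|\mathcal{H}_n|\geq r(\alpha)+o(1)$; you would either need to do the same, or supply the missing density argument for $\mathcal{H}_n^*$.

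Second, you correctly flag the decoupling between the Dirichlet-polynomial distribution and the local nearest-zero (or nonvanishing) condition as the main obstacle, but you leave it unresolved. The moment computations (Propositions~\ref{moment computation 1 re} and \ref{moment computation 2}) run over all of $\mathcal{H}_n$, not over the restricted subfamily, and without an asymptotic-independence statement one only obtains $\mathbb{P}(\mathcal{A}_b)\geq G(b)-(1-r(\alpha))+o(1)$ rather than the claimed product $r(\alpha)\,G(b)$ (with $G(b)=\frac1{\sqrt{2\pi}}\int_b^\infty e^{-u^2/2}\,du$); for large $b$ the former is vacuous. The paper addresses this by conditioning on $\mathcal{B}$ and asserting $\mathbb{P}(\mathcal{A}_b\mid\mathcal{B})\to G(b)$ after invoking the moment propositions. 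As written, your proposal identifies the right difficulty but does not prove the lower bound.
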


The third theorem is multidimensional and unconditional, and applies to the other two regimes.   


\begin{thm}\label{main theorem conditional mesoscopic and macroscopic}
Let all the components of $\vec{t} \in \mathbb{R}^k$ lie in either the mesoscopic or the macroscopic regime. Then for a real number $b$, as $n\to \infty$,
       \[
	\frac{1}{| \mathcal{H}_n|}	
	\# \bigg\{ D\in  \mathcal{H}_n: \frac{1}{\sqrt{ \mathcal{V}_{\operatorname{Re}}(\vec{a}, \vec{t}, n)}}
	\Big(\log\big|\mathfrak{L}_{\vec{a}, \vec{t}}\big(\tfrac12, \chi_D\big)\big| -\mathcal{M}(\vec{a}, \vec{t}, n)\Big) >b\bigg\}
	 \longrightarrow \frac1{\sqrt{2\pi}}\int_b^\infty e^{-\frac{u^2}{2}}\mathop{du} 
	\]	
and 	
	\[
	\frac{1}{| \mathcal{H}_{n}|}\# \bigg\{D\in  \mathcal{H}_n: \frac{\arg{\mathfrak{L}_{\vec{a}, \vec{t}}\big(\tfrac12, \chi_D \big)}}{\sqrt{\mathcal{V}_{\operatorname{Im}}(\vec{a}, \vec{t}, n)}} > b
	\bigg\}
    \longrightarrow \frac1{\sqrt{2\pi}}\int_b^\infty e^{-\frac{u^2}{2}}\mathop{du},
	\]
where again the mean and variances are as given in \eqref{mean for real}, \eqref{variance for real} and \eqref{variance for imaginary}. 
\end{thm}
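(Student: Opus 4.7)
The plan is to derive Theorem \ref{main theorem conditional mesoscopic and macroscopic} from Theorems \ref{Unconditional CLT for real near to half line:re} and \ref{Unconditional CLT for real near to half line:im} by showing that, when every component of $\vec t$ lies in the mesoscopic or macroscopic regime, the evaluation at $s=\tfrac12+it_j$ may be replaced with $s=\sigma_0+it_j$ for a suitably chosen $\sigma_0\to\tfrac12$ without affecting the limiting distribution. Concretely, I would fix $\sigma_0=\tfrac12+\omega(g)/g$ for a slowly growing $\omega$ satisfying $\omega(g)\to\infty$ and $\omega(g)=o(\sqrt{\log n})$, which meets the hypotheses of Theorems \ref{Unconditional CLT for real near to half line:re} and \ref{Unconditional CLT for real near to half line:im}. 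Since $\mathcal M(\vec a,\vec t,n)$, $\mathcal V_{\operatorname{Re}}(\vec a,\vec t,n)$ and $\mathcal V_{\operatorname{Im}}(\vec a,\vec t,n)$ depend only on $n$ and $\vec t$, the Gaussian conclusions at $\sigma_0$ carry the normalizations claimed at $\sigma=\tfrac12$. It then suffices to prove that, for each $j$,
\[
\log L\bigl(\tfrac12+it_j,\chi_D\bigr)-\log L\bigl(\sigma_0+it_j,\chi_D\bigr)=o\bigl(\sqrt{\log n}\bigr)
\]
in probability as $D$ ranges over $\mathcal H_n$, with the analogous statement for $\arg L$; Slutsky's lemma and the Cram\'er--Wold device will then finish both parts.

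I would establish this transfer estimate by exploiting the factorization of $L(s,\chi_D)$ as a degree-$2g$ polynomial in $q^{-s}$ with inverse roots $\sqrt q\,e^{i\theta_{\ell,D}}$, giving
\[
\log L\bigl(\tfrac12+it_j,\chi_D\bigr)-\log L\bigl(\sigma_0+it_j,\chi_D\bigr)=\sum_{\ell=1}^{2g}\log\frac{1-e^{i(\theta_{\ell,D}-t_j\log q)}}{1-e^{i(\theta_{\ell,D}-t_j\log q)}\,q^{1/2-\sigma_0}}.
\]
Equivalently, rewriting this as $-\int_{1/2}^{\sigma_0}\tfrac{L'}{L}(\sigma+it_j,\chi_D)\,d\sigma$ reduces the task to an $L^2$-estimate on the logarithmic derivative uniformly for $\sigma\in[\tfrac12,\sigma_0]$, averaged over $D\in\mathcal H_n$. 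The crucial input is that in the mesoscopic or macroscopic regime the condition $|t_j|g\to\infty\pmod{4\pi/\log q}$ drives $t_j\log q$ outside the distinguished cluster of low-lying eigenphases near $0$ (whose spacing scale is $1/g$). Hence, for the overwhelming majority of $D$, the nearest eigenphase to $t_j\log q$ lies at distance $\gg 1/g$, and $(L'/L)(\sigma+it_j,\chi_D)$ does not acquire an outsized contribution from any single zero on the segment $\sigma\in[\tfrac12,\sigma_0]$.

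The main obstacle is bounding the contribution of the exceptional $D$ for which some eigenphase $\theta_{\ell,D}$ falls anomalously close to $t_j\log q$. Unlike the microscopic case, where the shift $t_j$ itself lies in the cluster near $0$ and the low lying zeros hypothesis is indispensable, here the cluster is displaced away from $t_j\log q$ and the bound can be obtained unconditionally. I would do this by a second-moment computation, bounding
\[
\frac{1}{|\mathcal H_n|}\sum_{D\in\mathcal H_n}\#\bigl\{\ell:\lVert\theta_{\ell,D}-t_j\log q\rVert_{2\pi}<g^{-1+\varepsilon}\bigr\}
\]
via the explicit formula over function fields together with the standard orthogonality relation for quadratic characters averaged over $\mathcal H_n$ and Weil's Riemann hypothesis, which pins every zero to the critical line. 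Combining this exceptional-set estimate with the pointwise bound on the well-separated part completes the transfer, and the argument for the $\arg L$ statement runs in parallel, the sign changes in the cross terms of the shifts being precisely those distinguishing $\mathcal V_{\operatorname{Im}}$ from $\mathcal V_{\operatorname{Re}}$.
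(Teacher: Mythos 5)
Your high-level architecture matches the paper: establish the central limit theorem at $\sigma_0=\tfrac12+c/X$ near the critical circle (the content of Theorems~\ref{Unconditional CLT for real near to half line:re} and~\ref{Unconditional CLT for real near to half line:im}, built on Propositions~\ref{Gaussian for real part of DP} and~\ref{Gaussian for imaginary part of DP}), then prove that $\log\mathfrak L_{\vec a,\vec t}(\tfrac12,\chi_D)-\log\mathfrak L_{\vec a,\vec t}(\sigma_0,\chi_D)=o(\sqrt{\log n})$ in probability, and close with Slutsky. Your observation that the normalizing mean and variances depend only on $n$ and $\vec t$ and therefore carry over is also correct.

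Where you diverge — and where the gap is — is in how you propose to prove the transfer estimate. You want to split $\mathcal H_n$ into a set where every eigenphase stays at distance $\gg g^{-1+\varepsilon}$ from $t_j\log q/(2\pi)$ and an exceptional set, and to bound the exceptional set by a first- or second-moment count of eigenphases near the shift via the explicit formula and orthogonality. This would require a one-level density (or pair correlation) estimate \emph{localized near the displaced point $t_j\log q/(2\pi)$ at scale $1/(yg)$ with $y\to\infty$}. The only unconditional input of this kind over $\mathbb F_q[t]$, Rudnick's result quoted in \eqref{one level density}, localizes eigenphases at scale $1/g$ and only with test functions whose Fourier transform is supported in $(-2,2)$; to resolve a window of size $1/(yg)$ you would need Fourier support of size $\asymp y$, which leaves the admissible range. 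Moreover that density is centered at the symmetry point $0$, not at a mesoscopic or macroscopic shift. So the key step of your proposal — "bound the exceptional set unconditionally by a second-moment computation" — is not available with the stated tools, and the phrase "$L^2$-estimate on the logarithmic derivative uniformly for $\sigma\in[\tfrac12,\sigma_0]$" is also problematic, since $L'/L(\sigma+it_j,\chi_D)$ has nonintegrable singularities as $\sigma\to\tfrac12^+$ exactly for those $D$ you cannot yet rule out.

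The paper's Proposition~\ref{difference between shift of logarithm2} avoids the exceptional set entirely. Starting from the explicit-formula representation \eqref{eq:log der symp zeros} and Lemma~\ref{lemma Altug Tsim}, it uses the positivity of $\operatorname{Re}\big(1/(1-\alpha_j q^{1/2-\sigma_0-it_j})\big)$ for each $j$ to dominate the full sum over zeros by $\big|\operatorname{Re}\tfrac{L'}{L}(\sigma_0+it_j,\chi_D)\big|$, and then controls $\tfrac{L'}{L}$ \emph{at the displaced abscissa $\sigma_0$} via the mollified Dirichlet polynomial identity \eqref{LprimeL-in primes} and the unconditional moment bound of Lemma~\ref{moments of lambda polyl2}. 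No zero-counting near the shift is needed, so the argument is unconditional without any one-level-density input. If you want to keep your exceptional-set philosophy, you would need to supply a new localized density estimate; the cleaner route is to replace your Step~4 with the Selberg approximation $\tfrac{L'}{L}(\sigma_0+it_j,\chi_D)\approx$ short Dirichlet polynomial and the one-sided positivity bound as in Proposition~\ref{difference between shift of logarithm2}.
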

   
This shows that for $D \in \mathcal{H}_n$, as $n \to \infty$, the quantities $\log \lvert L(\tfrac12 + it_j, \chi_D) \rvert$ for $j = 1,\ldots, k$ are approximately independent only when 
$
\lvert t_{j_1} \pm t_{j_2} \rvert \gg 1/g^{o(1)}
$ for all $1\leq j_1\neq j_2\leq k$.
The arguments $\arg L(\tfrac12 + it_j, \chi_D)$ are always approximately independent provided that the $t_j$ themselves lie in the mesoscopic regime.

\subsection{$L$-functions of Quadratic Twists of Elliptic Curves over Function Fields} 

Let $E$ be an elliptic curve defined over rational numbers. The distribution of the logarithm of the (nonzero) central $L$-values $L(\tfrac{1}{2}, E\otimes \chi_d)$, where $d$ varies over fundamental discriminants was studied by Radziwi\l\l{} and Soundararajan \cite[Theorem 2]{SR}. They established a one-sided central limit theorem for this orthogonal family, providing evidence for the Keating-Snaith conjectures.

A key feature of their approach is the use of truncated Euler products, which are particularly amenable to analysis. These are easily invertible, and the distribution of their logarithms can be effectively studied via the method of moments. They first demonstrated that, in most cases, the logarithms of these Euler products are small, and that in such cases, one can accurately approximate the full Euler product by short Dirichlet polynomials. Since $L(\tfrac12, E\otimes \chi_d)\geq 0$, this approach is quite effective. For nonzero heights $t$, however, analyzing $L(\tfrac12+it, E\otimes \chi_d)$ is more delicate, since then the imaginary part of its logarithm comes into play.

In~\cite{SRlower}, Radziwi\l\l{}  and Soundararajan established a general principle that lower bounds on the proportion of nonvanishing central $L$-values can be obtained by studying the one-level density of low lying zeros. Their work leads to a lower bound, conditionally on the generalized Riemann hypothesis for $L(s, E\otimes \chi)$ for every Dirichlet character $\chi$, for the count
\[
g(X; b) := 
\#\bigg\{
d \in \mathcal{E},\, X < |d| \leq 2X :
\frac{\log L\left( \tfrac{1}{2}, E\otimes\chi_d \right) + \tfrac{1}{2} \log \log |d|}{\sqrt{ \log \log |d| }}
\geq b
\bigg\},
\]
where $b$ is any fixed real number$, \varepsilon_E(d)$ denotes the root number, $N_E$ is the conductor of $E$, and
\[
\mathcal{E} = \left\{ d :\, d \text{ is a fundamental discriminant with } (d, 2N_E) = 1 \text{ and } \varepsilon_E(d) = 1 \right\}.
\]
In other words, by combining \cite[Theorem~2]{SR} with \cite[Theorem~1]{SRlower}, they established that
\begin{align*}
\#\{d\in \mathcal{E}:\, X<|d|\leq 2X\}
&\left(\frac14 \frac{1}{\sqrt{2\pi}}\int_b^\infty e^{-\frac{u^2}{2}}du+o(1)\right)
\\
& \leq g(X; b)
\leq \#\{d\in \mathcal{E}:\, X<|d|\leq 2X\}\left( \frac{1}{\sqrt{2\pi}}\int_b^\infty e^{-\frac{u^2}{2}}du+o(1)\right).
\end{align*}

In our work, $E$ is a fixed elliptic curve over the function field $\mathbb{F}_q[t]$.
For fixed vectors $\vec{a}, \vec{t} \in \mathbb{R}^k$, and for $D\in \mathcal{H}_{n}$, we set
\begin{equation}\label{def-L-linear comboe}
\mathfrak{L}_{\vec{a}, \vec{t}}\left(s, E\otimes\chi_D\right)
:=L\left(s+i t_1, E\otimes\chi_D\right)^{a_1}\cdots L\left(s+it_k, E\otimes\chi_D\right)^{a_k} .
\end{equation}
Define
\[
\mathcal{H}^{\Delta}_{n} := \left\{D \in \mathbb{F}_q[t] :  D \text{ is monic and square-free,} \deg D = n, 
 (D, \Delta) =1\right\},
\]
where $\Delta$ is the discriminant of the elliptic curve $E$ such that $\deg_t
(\Delta)$ is minimal. We will base our discussion on the set of polynomials, $ \mathcal{H}^{\Delta, +}_{n}$, which is the subset of  $D\in  \mathcal{H}^{\Delta}_{n}$ for which $L(s, E\otimes \chi_D)$ has root number $1$.  
Let $m$ be the conductor of $L(s, E\otimes \chi_D)$ defined as in \eqref{m and n connection for elliptic curve}. Note that $m\to \infty$ if and only if $n\to \infty$, given that $E$ is fixed. 

Analogously to the low lying zeros hypothesis in Section~\ref{different regimes}, one can formulate the following which will be needed to study the distribution at microscopic shifts. Note that the terms microscopic, mesoscopic and macroscopic are as defined in Section \ref{different regimes}, and a $k$-dimensional vector $\vec{t}$ lies in a given regime only if all of its coordinates lie in that regime. 

\begin{shifted hyp E}\label{low lying zeros for elliptic curve}
For each $(C, N_E)=1$, let $\{\theta_{j, C}(E\otimes \chi_D)\}_{j=1}^m$ be the eigenphases associated to $\{L(s, E\otimes \chi_D)\}_{D\in  \CMcal{H}_{n}^{\Delta}(C)}$. If $y=y(m)\to \infty$, then as $m\to \infty$
	\[
	\frac{1}{\big| \CMcal{H}_{n}^{\Delta,+}\big|}
	\# \bigg\{D\in  \CMcal{H}_{n}^{\Delta}(C): \min_{j}\Big|\theta_{j, C}(E\otimes \chi_D)\Big|< \frac{1}{ym} \bigg\} =o_E(1).
	\]
\end{shifted hyp E}

Conditionally on this type of low-lying zeros hypothesis, one can readily establish an analogue of Theorem~\ref{main theorem conditional microscopic} for $
\log\bigl|\mathfrak{L}_{\vec{a}, \vec{t}}\bigl(\tfrac12, E\otimes\chi_D\bigr)\bigr|$
with microscopic $\vec{t}$ and $D\in \mathcal{H}^{\Delta,+}_{n}$, using the material developed for the two families. We omit the statement in order to concentrate on the unconditional results, and also to avoid potential confusion arising from the presence of two similar hypotheses.

The theorem below is a function field analogue of the result by Radziwi\l\l{} and Soundararajan, where the point is close to $\frac12$ by a microscopic shift. 


\begin{thm}\label{unconditional result for orthogonal family}
	Assume that $\alpha\in \mathbb{R}$, so that $\frac{2\pi\alpha}{m\log q}$ falls into microscopic regime. For $b$ a real number, as $m\to \infty$,
	\begin{align*}
	\frac{r_E(\alpha)}{\sqrt{2 \pi}}\int_b^\infty e^{-\frac{u^2}{2}}du+o_{E, y}(1)
	&\leq \frac1{| \mathcal{H}^{\Delta, +}_{n}|} \# \bigg\{D\in \mathcal{H}^{\Delta, +}_{n}\, : \, \frac{\log\left|L\left(\tfrac12+i\frac{2\pi\alpha}{m\log q}, E\otimes\chi_D\right)\right| +\tfrac12 \log m}{\sqrt{\log m}}
	 >b\bigg\}
	 \\
	&\leq  \frac{1}{\sqrt{2 \pi}}\int_b^\infty e^{-\frac{u^2}{2}}du+o_{E, y}(1),
	\end{align*}
where $r_E(\alpha)$ is as defined in \eqref{r_E} and has the maximum $\lim_{\alpha\to 0^+}r_E(\alpha)=\frac14$. 
	\end{thm}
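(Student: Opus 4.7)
The argument mirrors the Radziwi\l\l{}--Soundararajan template adapted to this orthogonal family of elliptic-curve twists over $\mathbb{F}_q[t]$. The plan is to sandwich the distribution function between two one-sided Gaussian laws: an upper bound coming from a universal moment estimate for a short Dirichlet-polynomial approximation, and a lower bound obtained by combining that estimate with a positive-proportion nonvanishing result for the shifted central values, where the proportion $r_E(\alpha)$ emerges from a one-level density computation for the eigenphases at height $\alpha$.

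First I would introduce the short Euler-sum approximation
\[
P_X(D) := \operatorname{Re} \sum_{\substack{f \in \mathbb{F}_q[t] \text{ monic prime power}\\ \deg f \leq X}} \frac{\lambda_E(f)\,\chi_D(f)}{|f|^{1/2+i\frac{2\pi\alpha}{m\log q}}\,\deg f},
\]
for $X=X(m)$ a small power of $m$, where $\lambda_E(f)$ are the Frobenius trace coefficients of $E$. Via a Mertens/Rankin--Selberg input for $L(s, E\otimes\chi_D)$ together with orthogonality of quadratic characters on $\mathcal{H}^{\Delta,+}_n$, the method of moments yields that $(P_X(D) + \tfrac12 \log m)/\sqrt{\log m}$ converges in distribution to $N(0,1)$; the shift $-\tfrac12 \log m$ is the signature of orthogonal symmetry. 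For the upper bound, I would then use an effective approximate functional equation together with convexity of $\log|L|$ in a thin right-neighbourhood of the critical line to prove, outside an exceptional set of density $o(1)$, the one-sided inequality
\[
\log \bigl| L\bigl(\tfrac12 + i\tfrac{2\pi\alpha}{m\log q}, E\otimes \chi_D\bigr)\bigr| \leq P_X(D) + o\bigl(\sqrt{\log m}\bigr).
\]
Combined with the Gaussian limit for $P_X$, this delivers the upper bound $\tfrac{1}{\sqrt{2\pi}}\int_b^\infty e^{-u^2/2}\,du + o_{E,y}(1)$.

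For the matching lower bound, I would restrict to the subset $\mathcal{G}(y) \subset \mathcal{H}^{\Delta,+}_n$ consisting of those $D$ such that $L(s, E \otimes \chi_D)$ has no zero within distance $1/(ym)$ of the point $\tfrac12 + i\tfrac{2\pi\alpha}{m\log q}$. On $\mathcal{G}(y)$ one can run the explicit formula in both directions to upgrade the one-sided inequality to the asymptotic equality $\log|L| = P_X(D) + o(\sqrt{\log m})$, so that the Gaussian law transfers from $P_X$ to $\log|L|$ on $\mathcal{G}(y)$. The asymptotic relative density of $\mathcal{G}(y)$ as $y \to \infty$ is produced by an unconditional one-level density computation for the eigenphases of this orthogonal family near $\alpha$; following the recipe of \cite{SRlower} with test functions supported in the admissible window available over $\mathbb{F}_q[t]$, this computation yields precisely the constant $r_E(\alpha)$ defined in \eqref{r_E}, whose maximum value $\lim_{\alpha\to 0^+} r_E(\alpha) = 1/4$ is the Katz--Sarnak orthogonal prediction.

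The main obstacle will be producing the nonvanishing proportion $r_E(\alpha)$ unconditionally through the one-level density: unlike the symplectic Dirichlet family of Theorem \ref{mild conditioning lower bound}, where straightforward orthogonality of quadratic characters gives the full constant in Theorem \ref{mild conditioning lower bound} via \eqref{values of nonvanishing}, the orthogonal elliptic-curve family is constrained by the presence of forced low-lying zeros and by the admissible support of the available one-level density, which together cap the unconditional lower bound at the $1/4$ ceiling. Once the one-level density input is established with its admissible test-function support, combining it with the moment calculation for $P_X$ closes the sandwich and yields the stated two-sided bounds.
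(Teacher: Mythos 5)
Your overall architecture (sandwich between two one-sided Gaussian bounds, Dirichlet-polynomial approximation via moments, one-level density input for the nonvanishing proportion) is the right one, but both halves take a different technical route from the paper, and the lower bound as described has a genuine gap.

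For the upper bound, the paper does not invoke an approximate functional equation or convexity. It computes the difference $\log|L(\tfrac12+it,E\otimes\chi_D)| - \log|L(\sigma_0+it,E\otimes\chi_D)|$ exactly through the zero-sum as in \eqref{eq:at sigma0 at onehalf}, notes that the summands over eigenphases are all nonnegative (since $q^{1/2-\sigma_0}+q^{\sigma_0-1/2}\geq 2$), and thus obtains the deterministic inequality $\log|L(\tfrac12+it)| \leq \log|L(\sigma_0+it)| + \kappa_\cF(\sigma_0-\tfrac12)\tfrac{\log q}{2} + O(1)$ for every $D$, with no exceptional set; Theorem~\ref{Unconditional CLT for real near to half line:re} then finishes. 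Your convexity route would need to be made precise: standard Hadamard-type convexity of $\log|L|$ interpolates from the right of $\sigma_0$, which is the wrong direction for what is needed here.

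For the lower bound, the paper conditions on the nonvanishing event $\mathcal{B}=\{D: L(\tfrac12+it,E\otimes\chi_D)\neq 0\}$, whose density is $\geq r_E(\alpha)$ by Corollary~\ref{no-vanishing result o}, and argues via the factorization $\mathbb{P}(\mathcal{A}_b)=\mathbb{P}(\mathcal{A}_b\mid\mathcal{B})\,\mathbb{P}(\mathcal{B})$ following the proof of Theorem~\ref{mild conditioning lower bound}. You instead condition on the strictly smaller zero-free-window event $\mathcal{G}(y)$. Two problems. First, neither \eqref{one level density o} nor Corollary~\ref{no-vanishing result o} controls $\mathcal{G}(y)$: they concern $\mathcal{B}$, not $\mathcal{G}(y)\subsetneq\mathcal{B}$, and squeezing a window of width $1/(ym)$ with test functions of admissible Fourier support is a separate step that needs its own argument. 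Second, and more critically, even granting $\mathbb{P}(\mathcal{G}(y))\geq r_E(\alpha)-o_y(1)$, the inclusion-exclusion bound $\mathbb{P}(\mathcal{A}_b\cap\mathcal{G}(y)) \geq \tfrac{1}{\sqrt{2\pi}}\int_b^\infty e^{-u^2/2}\,du - (1-\mathbb{P}(\mathcal{G}(y)))$ yields an \emph{additive} lower bound $\tfrac{1}{\sqrt{2\pi}}\int_b^\infty e^{-u^2/2}\,du - (1-r_E(\alpha))$, not the \emph{multiplicative} bound $r_E(\alpha)\cdot\tfrac{1}{\sqrt{2\pi}}\int_b^\infty e^{-u^2/2}\,du$ asserted in the theorem; for $b$ large the additive bound turns negative and hence vacuous, whereas the multiplicative one does not. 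Recovering the stated constant requires controlling the distribution of $P_X(D)$ conditionally on $\mathcal{B}$ (or $\mathcal{G}(y)$), which your outline leaves unaddressed.
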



\begin{thm}\label{Uncon mes or micro}
Suppose that $\vec{t}$ lies in either of the mesoscopic or macroscopic regimes. Then for a real number $b$, as the conductor $m$ of $E$ tends to $\infty$, 
\begin{align*}
\frac1{|\mathcal{H}^{\Delta, +}_{n}|}\#\bigg\{D\in \mathcal{H}^{\Delta, +}_{n}\, : \, 
\tfrac{1}{\sqrt{ \mathcal{V}_{\operatorname{Re}}(\vec{a}, \vec{t}, m)}}
\Big(\log\left|\mathfrak{L}_{\vec{a}, \vec{t}}\left(\tfrac12, E\otimes\chi_D\right)\right| +\mathcal{M}(\vec{a}, \vec{t}, m)\Big)
>b\bigg\}
 \longrightarrow \frac{1}{\sqrt{2 \pi}}\int_b^\infty e^{-\frac{u^2}{2}}du,
	\end{align*}
and 
	\[
	\frac1{|\mathcal{H}^{\Delta, +}_{n}|}
	\#\bigg\{D\in \mathcal{H}^{\Delta, +}_{n}\, : \, \frac{\arg\mathfrak{L}_{\vec{a}, \vec{t}}\left(\tfrac12, E\otimes\chi_D\right)}{\sqrt{ \mathcal{V}_{\operatorname{Im}}(\vec{a}, \vec{t}, m)}}
	>b\bigg\}
	 \longrightarrow\frac{1}{\sqrt{2 \pi}}\int_b^\infty e^{-\frac{u^2}{2}}du,
	\]
where $\mathcal{M}(\vec{a}, \vec{t}, m)$, $\mathcal{V}_{\operatorname{Re}}(\vec{a}, \vec{t},m)$, and $\mathcal{V}_{\operatorname{Im}}(\vec{a}, \vec{t}, m)$ are defined by \eqref{mean for real}, \eqref{variance for real} and \eqref{variance for imaginary}, respectively.
\end{thm}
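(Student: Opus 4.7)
The plan is to run the argument of Theorem~\ref{main theorem conditional mesoscopic and macroscopic} in the orthogonal setting of quadratic twists of $E$, using that in the mesoscopic/macroscopic regime the shifts satisfy $|t_j|m \to \infty$ for each $j$. This keeps each point $\tfrac12+it_j$ at distance $\gg 1/m$ from the bulk of low-lying zeros of $L(s, E\otimes \chi_D)$, so no additional hypothesis on low-lying zeros is required: an explicit formula / Hadamard product truncation yields, for a suitable cutoff $X=X(m)$,
\begin{equation*}
\log L\bigl(\tfrac12+it_j, E\otimes\chi_D\bigr)
= -\sum_{\substack{P \text{ monic prime}\\ \deg P \leq X}}
\frac{\lambda_E(P)\,\chi_D(P)}{|P|^{1/2+it_j}\deg P}
+ E_j(D),
\end{equation*}
where $\lambda_E(P)$ denotes the normalized trace of Frobenius of $E$ at $P$ and $E_j(D)$ is negligible for all but an $o(1)$ proportion of $D\in\mathcal{H}^{\Delta,+}_n$.

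Next I would invoke the Cram\'er--Wold device to reduce the real-part claim to a one-dimensional CLT for $\sum_j a_j \log\bigl|L(\tfrac12+it_j, E\otimes \chi_D)\bigr|$ and the imaginary-part claim to one for $\sum_j a_j \arg L(\tfrac12+it_j, E\otimes \chi_D)$. Each becomes a real-valued Dirichlet polynomial in the characters $\chi_D(P)$, to which I would apply the method of moments. The key input is the character average
\begin{equation*}
\frac{1}{|\mathcal{H}^{\Delta,+}_n|}\sum_{D\in\mathcal{H}^{\Delta,+}_n} \chi_D(P_1\cdots P_r) = \mathbf{1}\{P_1\cdots P_r \text{ is a square}\} + o(1),
\end{equation*}
with the restriction to root number $+1$ contributing only a harmless factor of $\tfrac12$ absorbed by the normalization. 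Consequently only diagonal pairings survive, odd moments vanish, and the $(2k)$-th moment reduces to $\tfrac{(2k)!}{2^k k!}$ times the $k$-th power of a variance, which must be identified with $\mathcal{V}_{\operatorname{Re}}$ or $\mathcal{V}_{\operatorname{Im}}$ respectively.

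The variance calculation is the bookkeeping heart of the argument: the $(P,P)$ diagonal at shifts $t_{j_1}, t_{j_2}$ produces sums of the form $\sum_{\deg P \leq X} \lambda_E(P)^2 / |P|^{1+i(t_{j_1}\pm t_{j_2})}$, and by the Rankin--Selberg factorization $L(s, E\times E) = L(s, \operatorname{Sym}^2 E)\,\zeta_{\mathbb{F}_q[t]}(s)$ these partial sums are asymptotic to $\log \min\{m, 1/|t_{j_1}\pm t_{j_2}|\}$, reproducing exactly \eqref{variance for real} and \eqref{variance for imaginary}. The sign $+\mathcal{M}(\vec a, \vec t, m)$ in the centering (rather than $-$ as in the symplectic case) emerges from the $P^2$ terms of the logarithmic expansion: in the orthogonal family the $\lambda_E(P^2)$-sums give a secular drift of $-\tfrac12 \log \min\{m, 1/(2|t_j|)\}$ in the real part that must be absorbed into the mean, reflecting the $-\tfrac12 \log m$ orthogonal mean at the critical point. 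The main obstacle will be controlling the truncation error $E_j(D)$ in the mesoscopic regime, where $|t_j|$ is allowed to tend to zero: one must show, using only the unconditional one-level density of this family together with $|t_j|m \to \infty$, that the proportion of $D$ admitting a zero of $L(s, E\otimes\chi_D)$ within $o(1/m)$ of $\tfrac12+it_j$ is $o(1)$, so that the explicit-formula truncation is valid on a set of full density in $\mathcal{H}^{\Delta,+}_n$.
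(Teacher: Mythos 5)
Your proposal is correct in its identification of the moment machinery, but it deviates from the paper's route at exactly the point where a genuine gap appears. The diagonal moment computation, the Cram\'er--Wold reduction, the Rankin--Selberg evaluation of the variance terms $\sum_{d(P)\le X}\lambda_E(P)^2\chi_D(P)^2/|P|^{1+i(t_{j_1}\pm t_{j_2})}$ via $\mathcal{L}(u,E\otimes E)=\mathcal{L}(u,\operatorname{sym}^2E)\,\zeta_{\mathbb{F}_q[t]}(u)$, and the origin of the $+\mathcal{M}(\vec a,\vec t,m)$ centering from the $\lambda_E(P^2)$ secondary terms all agree with what the paper actually does in Propositions~\ref{moment computation 1 re}--\ref{moment computation 2}. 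The problem is your plan to validate a truncated explicit-formula expansion directly at $s=\tfrac12+it_j$ by showing, via the unconditional one-level density, that for almost all $D$ no zero of $L(s,E\otimes\chi_D)$ lies within $o(1/m)$ of $\tfrac12+it_j$. The one-level density \eqref{one level density o} controls the eigenphases $\theta_{j}$ only in a window of \emph{fixed} rescaled size around the symmetry point $\theta=0$ (the test functions $\phi$ have $\hat\phi$ of compact support), i.e., zeros within $O(1/m)$ of $s=\tfrac12$. In the mesoscopic regime $m|t_j|\to\infty$, so $\tfrac12+it_j$ is unboundedly many mean spacings away from the symmetry point, and the one-level density asserts nothing about zero clustering near that point; no unconditional replacement is available for this family.

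The paper never faces this obstacle because it never evaluates directly on the critical line. Proposition~\ref{logL-as-PX} approximates $\log L(\sigma_0+it_j,\cF)$ at $\sigma_0=\tfrac12+c/X>\tfrac12$ by the Dirichlet polynomial $\mathcal{D}_X(\sigma_0+it_j,\cF)$, with all error terms controlled by $|L'/L(\sigma_0+it_j,\cF)|$ and related averaged quantities (Lemmas~\ref{moments of lambda polyl} and~\ref{moments of lambda polyl2}) --- these are finite off the line regardless of where the zeros lie. A separate bridge, Proposition~\ref{difference between shift of logarithm2}, then shows that the discrepancy $\bigl|\log L(\sigma_0+it_j,\cF)-\log L(\tfrac12+it_j,\cF)\bigr|$ is $o(\sqrt{\log\kappa_\cF})$ \emph{on average over} $D$, again by reducing to averaged bounds on $L'/L(\sigma_0+it_j,\cF)$, so that no low-lying- or shifted-zeros hypothesis is needed in the mesoscopic/macroscopic regimes. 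This off-the-line-then-bridge structure is precisely what your proposal is missing: either adopt it, or supply a genuine replacement for the zero-avoidance step, which the one-level density cannot do.
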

	
Note that, our approach in proving Theorems \ref{unconditional result for orthogonal family} and \ref{Uncon mes or micro} differs fundamentally from that of results of Radziwi\l\l{} and Soundararajan \cite{SR, SRlower}. We employ Selberg’s approximation method, which enables us to study the distribution of $\arg L\big(\frac12, E\otimes \chi_D\big)$ in addition to the real part of the logarithm. A study of the imaginary part of the logarithm has been out of reach via the techniques used by Radziwi\l\l{} and Soundararajan.

We will now add some general remarks about the proofs of our theorems. Our approach follows a different path than Bourgade’s method in \cite{Bou}. The proofs of our results rely on Selberg’s approximation of  $\log L(s, \mathcal{F})$ by a Dirichlet polynomial, where $\mathcal{F}$ is either $\chi_D$ or $E\otimes \chi_D$, and $s$ is slightly away from the critical line due to the influence of low lying zeros. We begin by computing large moments of linear combinations of these Dirichlet polynomials at points with different heights. One of the key steps is to show that the discrepancy between the logarithm of $L$-functions near and on the critical line is small. The analysis varies based on the regime that the heights belong to, and requires separate treatment for the real and imaginary parts of the linear combinations of logarithms of $L$-functions.

Notably, low lying zeros are significant only at the average height of the zeros, which is within the microscopic regime. In contrast, the distribution at the mesoscopic regime seems to remain unaffected by low lying zeros, allowing for a well-structured correlation of fluctuations, as described in \eqref{correlation structure}. Within the microscopic regime, we cannot draw conclusions about the distribution of the imaginary part of the linear combinations of logarithms of $L$-functions, as the variance of the linear combinations of the suitable Dirichlet polynomials tends to zero.


\subsection{Applications}\label{Application}


\subsubsection{Gaussian Processes}
\begin{definition*}
Let $G=\{(w_x)_{x\in I}\}$ be a stochastic process for a continuous time or location, where $I$ is a nonempty subset of $\mathbb{R}$. 

G is called a Gaussian process if for any finite collection of  times or locations $x_1, \ldots, x_\ell\in I$, the vector $ (w_{x_1}, \ldots, w_{x_\ell})$ is a multivariate Gaussian random variable. That is, every linear combination of $(w_{x_1}, \ldots, w_{x_\ell})$ has a Gaussian distribution. A Gaussian process is said to be mean stationary if the mean values are in the form $\mathbb{E}(w_{x})=\mu$ for all $x$.
\end{definition*}

We consider the stochastic processes
\begin{align*}
\mathfrak{R}=\bigg\{\bigg(\frac{\log |\mathcal{L}(q^{-1/2}e(\theta_{\delta}), \cF)|-(\epsilon_{\cF}/2) \log(\kappa_\cF)}{\sqrt{1/2(1+\delta)\log(\kappa_\cF)}}: \theta_\delta=\frac{1}{\kappa_\cF^\delta}\bigg)_{0\leq \delta < 1}\bigg\}
\end{align*}
and 
\[
\mathfrak{I}=\bigg\{\bigg(\frac{ \arg \mathcal{L}(q^{-1/2}e(\theta_\delta), \cF)}{\sqrt{1/2(1-\delta)\log(\kappa_\cF)}}: \theta_\delta=\frac{1}{\kappa_\cF^\delta}\bigg)_{0\leq \delta < 1}\bigg\},
\]
where $\kappa_\cF$ represents either $2g$ or $m$ depending on the family under consideration. Also, the sign of the mean value is given by
\begin{align}\label{sign of MV}
\epsilon_{\mathcal{F}} =
\begin{cases}
1  & \text{if } \mathcal{F} = \chi_D, \\[6pt]
-1  & \text{if } \mathcal{F} = E \otimes \chi_D.
\end{cases}
\end{align}

The following corollary is a direct application of Theorems \ref{Unconditional CLT for real near to half line:re} and \ref{Unconditional CLT for real near to half line:im}.


\begin{cor}\label{Gaussian process}
Both $\mathfrak{R}$ and $\mathfrak{I}$ form mean stationary Gaussian processes as $\kappa\to \infty$. Moreover, the covariance matrix of $\mathfrak{I}$ is an identity matrix while the entries of covariance matrix for $\mathfrak{R}$ are given by 
	\begin{align*}
	\operatorname{Cov}(\mathfrak{R}_{j_1}, \mathfrak{R}_{j_2})=
	\begin{cases}
	1 & \text{ if }\,\, j_1=j_2, \\ 
	\tfrac{2 \left(\delta_{j_1}\wedge \delta_{j_2}\right)}{\sqrt{1+\delta_{j_1}} \sqrt{1+\delta_{j_2}}} & \text{ otherwise}. 
	\end{cases} 
	\end{align*}
Here for $i\neq j$,
\[
x_i\wedge x_j := \min\big\{x_i, x_j\big\}.
\]		 
Thus, the covariance matrix of $\mathfrak{R}$ has nonzero entries except its first row and column, and all of its diagonal entries are 1. 
\end{cor}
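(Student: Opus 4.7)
The plan is a two-step reduction: joint Gaussian convergence is obtained from the single-linear-combination CLTs already in hand via the Cram\'er--Wold device, and then the limiting covariance matrix is read off from the explicit variance formulas \eqref{variance for real} and \eqref{variance for imaginary} by inserting the specific shifts that define the process. Fix any finite sample $0\le \delta_1<\delta_2<\cdots<\delta_\ell<1$ and set $t_j:=-2\pi/(\kappa_\cF^{\delta_j}\log q)$, so that the point $q^{-1/2}e(\theta_{\delta_j})$ corresponds to the shift $t_j$ on the critical line; each $t_j$ lies in the mesoscopic or macroscopic regime. For any real coefficients $\vec a=(a_1,\ldots,a_\ell)$, Theorems~\ref{main theorem conditional mesoscopic and macroscopic} and~\ref{Uncon mes or micro} (respectively for $\cF=\chi_D$ and $\cF=E\otimes \chi_D$), supplemented by the near-critical-line Theorems~\ref{Unconditional CLT for real near to half line:re} and~\ref{Unconditional CLT for real near to half line:im}, imply that each of $\sum_j a_j\mathfrak R_{\delta_j}$ and $\sum_j a_j\mathfrak I_{\delta_j}$ converges in distribution to a centred Gaussian whose variance is $\mathcal V_{\operatorname{Re}}(\vec a,\vec t,\kappa_\cF)$ or $\mathcal V_{\operatorname{Im}}(\vec a,\vec t,\kappa_\cF)$ divided by the corresponding normalization. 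The Cram\'er--Wold theorem then upgrades this to joint multivariate Gaussian convergence of $(\mathfrak R_{\delta_1},\ldots,\mathfrak R_{\delta_\ell})$ and $(\mathfrak I_{\delta_1},\ldots,\mathfrak I_{\delta_\ell})$, and mean stationarity at $\mu=0$ follows because the centrings in the definitions of $\mathfrak R_\delta$ and $\mathfrak I_\delta$ match the mean $\mathcal M(\vec a,\vec t,\kappa_\cF)$ appearing in those CLTs.

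It remains to identify the limiting covariance matrix, which is read off from \eqref{variance for real} and \eqref{variance for imaginary} by polarization. For $j_1<j_2$ all shifts have the same sign and $|t_{j_1}|\ge |t_{j_2}|$, so both $|t_{j_1}-t_{j_2}|$ and $|t_{j_1}+t_{j_2}|$ are of order $|t_{j_1}|\asymp \kappa_\cF^{-(\delta_{j_1}\wedge\delta_{j_2})}$. Since $\delta_{j_1}\wedge\delta_{j_2}<1$, the two minima $\min\{\kappa_\cF,1/|t_{j_1}\pm t_{j_2}|\}$ are each equal to $1/|t_{j_1}\pm t_{j_2}|$ and hence are of order $\kappa_\cF^{\delta_{j_1}\wedge\delta_{j_2}}$; consequently the cross-term in \eqref{variance for real} equals $2(\delta_{j_1}\wedge\delta_{j_2})\log\kappa_\cF+O(1)$. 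Polarizing yields
\begin{equation*}
\operatorname{Cov}\bigl(\log |\mathcal L_{j_1}|,\log |\mathcal L_{j_2}|\bigr)=(\delta_{j_1}\wedge\delta_{j_2})\log\kappa_\cF+O(1),
\end{equation*}
and dividing by $\sqrt{\tfrac12(1+\delta_{j_1})\log\kappa_\cF}\cdot\sqrt{\tfrac12(1+\delta_{j_2})\log\kappa_\cF}$ produces exactly the stated value $\tfrac{2(\delta_{j_1}\wedge\delta_{j_2})}{\sqrt{(1+\delta_{j_1})(1+\delta_{j_2})}}$. The diagonal entries tend to $1$ because the normalization factor is by construction the asymptotic standard deviation.

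For $\mathfrak I$, the same size comparison shows $\min\{\kappa_\cF,1/|t_{j_1}-t_{j_2}|\}/\min\{\kappa_\cF,1/|t_{j_1}+t_{j_2}|\}\to 1$ as $\kappa_\cF\to\infty$, so the cross-term in \eqref{variance for imaginary} is $o(1)$; after division by the normalizing denominator of order $\sqrt{\log\kappa_\cF}$ this contribution vanishes in the limit, leaving off-diagonal covariances equal to $0$ and the diagonal equal to $1$, i.e.\ the identity matrix. The main technical content is thus confined to the size analysis of $|t_{j_1}\pm t_{j_2}|$ relative to $\kappa_\cF$, so that the $\min$'s in \eqref{variance for real}--\eqref{variance for imaginary} resolve consistently for every pair of indices; once that is in place, Gaussianity of all finite-dimensional distributions is automatic from the Cram\'er--Wold reduction.
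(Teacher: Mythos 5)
Your strategy --- finite-dimensional Gaussian convergence via Cram\'er--Wold on top of the linear-combination CLTs, then polarizing the variance formulas \eqref{variance for real} and \eqref{variance for imaginary} to read off the limiting covariances --- is the natural path, and the paper itself gives no separate proof (it merely cites Theorems~\ref{Unconditional CLT for real near to half line:re} and~\ref{Unconditional CLT for real near to half line:im}). Your size analysis $|t_{j_1}\pm t_{j_2}|\asymp\kappa_\cF^{-(\delta_{j_1}\wedge\delta_{j_2})}$ and the resulting covariance computations for both $\mathfrak R$ and $\mathfrak I$ are correct and mirror the computations carried out in Sections~\ref{correlation of reals} and~\ref{subsection 6.2}.

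There is, however, a gap in the centring step. You assert that the subtraction $(\epsilon_\cF/2)\log\kappa_\cF$ in the definition of $\mathfrak R_\delta$ matches the mean $\mathcal M(\vec a,\vec t,\kappa_\cF)$ from the CLTs. It does not: with a single shift $t_\delta\asymp \kappa_\cF^{-\delta}$ and $0\le\delta<1$, one has $\min\{\kappa_\cF,1/(2|t_\delta|)\}\asymp\kappa_\cF^{\delta}$, so \eqref{mean for real} gives $\mathcal M=\tfrac{\delta}{2}\log\kappa_\cF+O(1)$, not $\tfrac12\log\kappa_\cF$. The mismatch $\tfrac{1-\delta}{2}\log\kappa_\cF$, after dividing by the normalizer $\sqrt{\tfrac12(1+\delta)\log\kappa_\cF}$, is of order $\sqrt{\log\kappa_\cF}$, so $\mathfrak R_\delta$ as literally written does not have a bounded, $\delta$-independent mean. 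The denominator, by contrast, is consistent with \eqref{variance for real}--\eqref{variance for imaginary}, so the issue is confined to the additive centring of $\mathfrak R$; the intended subtraction is evidently $(\epsilon_\cF\delta/2)\log\kappa_\cF$, i.e.\ $\epsilon_\cF\,\mathcal M$. Since covariance is unaffected by recentring, the rest of your argument survives, but the proof should record the corrected centring (or carry the shift explicitly) rather than assert an equality that fails as stated.
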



\subsubsection{Fluctuations of the Number of Zeros}

In this section, our goal is to understand the fluctuations of the number of zeros of $\mathcal{L}(u, \mathcal{F})$ on a circular arc of radius $q^{-1/2}$.
Define 
\[
S(\theta, \mathcal{F}):=\frac{1}{\pi} \arg \mathcal{L}\big(q^{-1/2}e(\theta), \mathcal{F}\big)=\frac{1}{\pi} \operatorname{Im} \log \mathcal{L}\big(q^{-1/2}e(\theta), \mathcal{F}\big).
\]
Set $e(x) = \exp(2\pi i x)$. Let $N(\theta, \mathcal{F})$ denote the number of zeros of $\mathcal{L}(u, \mathcal{F})$ on the circular arc $q^{-1/2}e(\psi)$ with $0\leq \psi \leq \theta\leq 1$. That is,
\[
N(\theta, \mathcal{F})=\sum_{\theta_j\leq \theta}1.
\]
Let $\theta \in [0,1)$. Following the approach in \cite[Theorem 7]{AGK} for $\mathcal{F} = \chi_D$, and applying a similar argument together with the functional equation in \eqref{eq:functional_eq} for $\mathcal{F} = E \otimes \chi_D$, we obtain 
\[
N(\theta, \mathcal{F})=\kappa_\cF \theta+ S(\theta, \mathcal{F}). 
\]

We define for $0\leq \theta_1 < \theta_2$,
\[
\Delta(\theta_1, \theta_2, \mathcal{F})
=N(\theta_2, \mathcal{F})-N(\theta_1, \mathcal{F})+\kappa_\cF(\theta_2-\theta_1),
\]
which represents the number of zeros of $\mathcal{L}(u, \mathcal{F})$ with the angles between $\theta_1$ and $\theta_2$. As a direct consequence of second parts of Theorems \ref{main theorem conditional mesoscopic and macroscopic} and \ref{Uncon mes or micro}, we have the following corollary.


\begin{cor}
	Let $\delta_1, \delta_2\in [0, 1)$ such that $ \theta_1\asymp \frac{1}{\kappa_{\mathcal{F}}^{\delta_1}}, \theta_2 \asymp \tfrac{1}{\kappa_{\mathcal{F}}^{\delta_2}}$. Then as $\kappa\to \infty$,
	\[
	\frac{\Delta(\theta_1, \theta_2, \mathcal{F})}{\frac{1}{\pi}\sqrt{\log (\kappa_{\mathcal{F}})}}\longrightarrow \sqrt{1-\frac{\delta_1+\delta_2}{2}}N(0, 1).
	\]
	in probability law.
\end{cor}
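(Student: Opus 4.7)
The plan is to reinterpret the fluctuation $\Delta(\theta_1,\theta_2,\mathcal{F})$ as the argument of a linear combination of $L$-values and then apply the second parts of Theorems~\ref{main theorem conditional mesoscopic and macroscopic} and~\ref{Uncon mes or micro}. Using the relation $N(\theta,\mathcal{F})=\kappa_{\cF}\theta+S(\theta,\mathcal{F})$ and reading $\Delta$ as the fluctuation from its mean (i.e.\ $\Delta=S(\theta_2,\mathcal{F})-S(\theta_1,\mathcal{F})$), one has
\[
\Delta(\theta_1,\theta_2,\mathcal{F})
=\tfrac{1}{\pi}\bigl(\arg\mathcal{L}(q^{-1/2}e(\theta_2),\mathcal{F})-\arg\mathcal{L}(q^{-1/2}e(\theta_1),\mathcal{F})\bigr).
\]
Passing from $u$ to $s$ via $u=q^{-s}$, the point $u=q^{-1/2}e(\theta_j)$ corresponds to the shift $t_j=-2\pi\theta_j/\log q$, so $|t_j|\asymp\kappa_{\cF}^{-\delta_j}$. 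Since $\delta_j<1$, we have $|t_j|\kappa_{\cF}\to\infty$; moreover, $|t_j|\to 0$ when $\delta_j>0$ and $|t_j|\asymp 1$ when $\delta_j=0$. Thus each $t_j$ lies in either the mesoscopic or macroscopic regime.

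Setting $\vec{a}=(-1,1)$ and $\vec{t}=(t_1,t_2)$ in \eqref{def-L-linear combo} and \eqref{def-L-linear comboe}, we obtain $\arg\mathfrak{L}_{\vec{a},\vec{t}}(\tfrac12,\mathcal{F})=\pi\,\Delta(\theta_1,\theta_2,\mathcal{F})$. The second parts of Theorem~\ref{main theorem conditional mesoscopic and macroscopic} (for $\mathcal{F}=\chi_D$) and Theorem~\ref{Uncon mes or micro} (for $\mathcal{F}=E\otimes\chi_D$) then yield
\[
\frac{\pi\,\Delta(\theta_1,\theta_2,\mathcal{F})}{\sqrt{\mathcal{V}_{\operatorname{Im}}(\vec{a},\vec{t},\kappa_{\cF})}}\longrightarrow N(0,1),
\]
reducing the corollary to the asymptotic evaluation of $\mathcal{V}_{\operatorname{Im}}$.

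Plugging $a_1=-1$, $a_2=1$ into \eqref{variance for imaginary} and using $\log\kappa_{\cF}\sim\log n$ (resp.\ $\log m$) together with $\log\min\{\kappa_{\cF},1/(2|t_j|)\}=\delta_j\log\kappa_{\cF}+O(1)$, I would arrive at
\[
\mathcal{V}_{\operatorname{Im}}(\vec{a},\vec{t},\kappa_{\cF})
=\Bigl(1-\tfrac{\delta_1+\delta_2}{2}\Bigr)\log\kappa_{\cF}
-\log\!\left(\frac{\min\{\kappa_{\cF},1/|t_1-t_2|\}}{\min\{\kappa_{\cF},1/|t_1+t_2|\}}\right)+O(1).
\]
In the generic case $\delta_1\neq\delta_2$, one has $|t_1\pm t_2|\asymp\kappa_{\cF}^{-\min(\delta_1,\delta_2)}$, the cross-term ratio is bounded, and the leading asymptotic becomes $(1-(\delta_1+\delta_2)/2)\log\kappa_{\cF}$. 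Substituting this back yields the claimed convergence to $\sqrt{1-(\delta_1+\delta_2)/2}\cdot N(0,1)$.

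The principal (albeit minor) subtlety lies in the borderline case $\delta_1=\delta_2$, where $|t_1-t_2|$ can be considerably smaller than $|t_1+t_2|$ so that the ratio in the cross term need not be bounded. Since the hypothesis $\theta_j\asymp\kappa_{\cF}^{-\delta_j}$ only pins down $\theta_j$ up to fixed positive constants, a mild separation condition on $\theta_1,\theta_2$ is needed to ensure that this contribution is $o(\log\kappa_{\cF})$; under any such condition (implicit in the use of $\asymp$ in the statement), the argument goes through unchanged.
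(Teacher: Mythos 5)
Your proof is correct and matches the paper's intended route: the paper presents this corollary as a ``direct consequence of second parts of Theorems~\ref{main theorem conditional mesoscopic and macroscopic} and~\ref{Uncon mes or micro},'' and your reduction of $\Delta(\theta_1,\theta_2,\mathcal{F})$ to $\tfrac1\pi\arg\mathfrak{L}_{\vec{a},\vec{t}}(\tfrac12,\mathcal{F})$ with $\vec{a}=(-1,1)$ together with the evaluation of $\mathcal{V}_{\operatorname{Im}}$ is exactly what the authors sketch (cf.\ the proof of Corollary~\ref{mesoscopic fluctuation}, where $\vec{a}=(1,-1)$ plays the same role). Your observation about the borderline case $\delta_1=\delta_2$ is a genuine subtlety in the way $\asymp$ is used in the statement, and the implicit separation assumption you identify ($|\theta_1-\theta_2|\asymp\kappa_{\cF}^{-\min(\delta_1,\delta_2)}$, as made explicit by the parametrization $\theta_j=\alpha_j/\kappa_{\cF}^{\delta}$ in Corollary~\ref{mesoscopic fluctuation}) is indeed what is needed for the cross term in $\mathcal{V}_{\operatorname{Im}}$ to be $O(1)$.
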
 

The above result recovers a result of Faifman--Rudnick \cite[eq. (1.3)]{FR} for the family $\mathcal{H}_{2g+2}$ by using the explicit formula. It was also obtained independently by Faifman \cite[Theorem 3 (i)]{Faif} via Selberg's technique for the symplectic family.

The following result is another application of Theorems \ref{main theorem conditional mesoscopic and macroscopic} and \ref{Uncon mes or micro}. It can be seen as evidence towards repulsion of the zeros of $L(s, \chi_D)$ (and $L(s, E\otimes \chi_D)$) over functions fields in the symplectic (and orthogonal) family in the mesoscopic regime. The correlations appear when counting the zeros of $\mathcal{L}(u, \mathcal{F})$ that fall in distinct intervals. The result can be compared to  \cite[Corollary 1.3]{Bou}, \cite[Theorem 3.1]{CD} and \cite[Theorem 1 and (3)]{Wie}.


\begin{cor}\label{mesoscopic fluctuation}
	Let $\delta\in [0, 1)$. Consider the stochastic process 
	\[
	\mathcal{W}=\left(\frac{\Delta\left(\theta_1, \theta_2, \mathcal{F}\right)}{\frac{1}{\pi}\sqrt{(1-\delta)\log (\kappa_{\mathcal{F}})}}: \theta_j=\frac{\alpha_j}{\kappa_{\mathcal{F}}^{\delta}}, j=1, 2\right)_{0\leq \alpha_1<\alpha_2<\infty}.
	\]
	Then $\mathcal{W}$ follows a Gaussian process $(Z(\alpha_1, \alpha_2): 0\leq \alpha_1<\alpha_2<\infty)$ as $\kappa_{\mathcal{F}}\to \infty$ with the covariance function
	\begin{align}\label{correlation structure}
	\operatorname{Cov}\left(Z(\alpha_1, \alpha_2), Z(\alpha_3, \alpha_4)\right)=\begin{cases}
	1 & \text{ if } \alpha_1= \alpha_3,  \alpha_2=\alpha_4\\ 
	\frac12 & \text{ if }  \alpha_1= \alpha_3,  \alpha_2\neq\alpha_4,\\
	\frac12 & \text{ if }  \alpha_1\neq \alpha_3,  \alpha_2=\alpha_4,\\
	-\frac12 & \text{ if }  \alpha_2=\alpha_3, \text{ or }  \alpha_1=\alpha_4,\\
	0 & \text{ otherwise}.
	\end{cases}
	\end{align}
\end{cor}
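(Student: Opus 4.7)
The plan is to establish convergence of the finite-dimensional distributions of $\mathcal{W}$ by the Cram\'er--Wold device, then match the limiting bilinear form against the table in \eqref{correlation structure}. The relation $N(\theta,\mathcal{F})=\kappa_\cF\theta+S(\theta,\mathcal{F})$ identifies $\Delta(\theta_1,\theta_2,\mathcal{F})$ with the fluctuation $S(\theta_2,\mathcal{F})-S(\theta_1,\mathcal{F})=\tfrac{1}{\pi}\bigl[\arg\mathcal{L}(q^{-1/2}e(\theta_2),\mathcal{F})-\arg\mathcal{L}(q^{-1/2}e(\theta_1),\mathcal{F})\bigr]$, and the parametrization $\mathcal{L}(q^{-s},\mathcal{F})=L(s,\mathcal{F})$ with $u=q^{-1/2}e(\theta)$ gives
\[
\arg\mathcal{L}\bigl(q^{-1/2}e(\theta),\mathcal{F}\bigr) \;=\; \arg L\Bigl(\tfrac12+i\tfrac{2\pi\theta}{\log q},\,\mathcal{F}\Bigr).
\]
Thus every linear combination of our $\Delta$'s is, up to the factor $\tfrac{1}{\pi}$, a linear combination of $\arg L$-values of the form governed by Theorems \ref{main theorem conditional mesoscopic and macroscopic} and \ref{Uncon mes or micro}.

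For the Cram\'er--Wold step, fix $\ell$ pairs $(\alpha_1^{(i)},\alpha_2^{(i)})$ and real coefficients $c_1,\ldots,c_\ell$, and enumerate the distinct values occurring among the $\alpha_j^{(i)}$ as $\beta_1,\ldots,\beta_K$. Set $\tau_k=\tfrac{2\pi\beta_k}{\kappa_\cF^\delta\log q}$ and
\[
b_k \;:=\; \sum_{i\,:\,\alpha_2^{(i)}=\beta_k} c_i \;-\; \sum_{i\,:\,\alpha_1^{(i)}=\beta_k} c_i,
\]
so that $\pi\sum_{i=1}^\ell c_i\,\Delta(\theta_1^{(i)},\theta_2^{(i)},\mathcal{F}) \;=\; \sum_{k=1}^{K} b_k \arg L\bigl(\tfrac12+i\tau_k,\mathcal{F}\bigr)$. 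Since $\delta\in[0,1)$, each $\tau_k$ lies in the mesoscopic regime (or macroscopic if $\delta=0$), so Theorem \ref{main theorem conditional mesoscopic and macroscopic} for $\mathcal{F}=\chi_D$ and Theorem \ref{Uncon mes or micro} for $\mathcal{F}=E\otimes\chi_D$ yield asymptotic Gaussianity of this sum with variance $\mathcal{V}_{\operatorname{Im}}(\vec{b},\vec{\tau},\kappa_\cF)$ as in \eqref{variance for imaginary}.

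The main computation is then to verify that under the normalization $\tfrac{1}{\pi}\sqrt{(1-\delta)\log\kappa_\cF}$ in $\mathcal{W}$, the limiting variance of the linear combination matches $\sum_{i,i'} c_i c_{i'}\operatorname{Cov}(Z_i,Z_{i'})$. Because all $\tau_k$ and pairwise combinations $\tau_{k_1}\pm\tau_{k_2}$ are of size $\asymp\kappa_\cF^{-\delta}$, each of $\log\min\{\kappa_\cF,\tfrac{1}{2|\tau_k|}\}$ and $\log\min\{\kappa_\cF,\tfrac{1}{|\tau_{k_1}\pm\tau_{k_2}|}\}$ equals $\delta\log\kappa_\cF+O(1)$. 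Substituting into \eqref{variance for imaginary}, the cross-term ratios $\log(\cdot/\cdot)$ collapse to $O(1)$ and the diagonal part consolidates to
\[
\mathcal{V}_{\operatorname{Im}}(\vec{b},\vec{\tau},\kappa_\cF) \;=\; \tfrac{1-\delta}{2}\Bigl(\sum_{k=1}^{K}b_k^2\Bigr)\log\kappa_\cF + O(1).
\]
Hence the normalized linear combination converges to $N\bigl(0,\tfrac12\sum_k b_k^2\bigr)$, and a direct expansion of $\tfrac12\sum_k b_k^2$ in the $c_i$'s reproduces \eqref{correlation structure}: identical pair $\mapsto 1$, shared left endpoint $\mapsto \tfrac12$, shared right endpoint $\mapsto \tfrac12$, adjacent intervals ($\alpha_2=\alpha_3$ or $\alpha_1=\alpha_4$) $\mapsto -\tfrac12$, and fully disjoint pairs $\mapsto 0$. (This matches the realization of $Z(\alpha_1,\alpha_2)$ as $Y_{\alpha_2}-Y_{\alpha_1}$ for i.i.d.\ $N(0,\tfrac12)$ random variables $Y_\alpha$.)

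The principal technical hurdle is the variance calculation above: one must verify that the differences of the $\log\min\{\cdot\}$-terms genuinely cancel to $O(1)$. This rests on all $\beta_k$ being positive and, for distinct indices, on $\beta_{k_1}+\beta_{k_2}$ as well as $|\beta_{k_1}-\beta_{k_2}|$ being of order $1$, so that $|\tau_{k_1}+\tau_{k_2}|^{-1}$ and $|\tau_{k_1}-\tau_{k_2}|^{-1}$ lie on the same $\kappa_\cF^{\delta}$ scale. The edge case $\beta_k=0$, which arises when some $\alpha_1^{(i)}=0$, is harmless since then $\arg L\bigl(\tfrac12,\chi_D\bigr)$ (respectively $\arg L(\tfrac12,E\otimes\chi_D)$) vanishes by the reality of the central value, so that $b_k$-component drops out and the covariance table is checked on the remaining indices.
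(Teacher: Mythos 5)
Your proposal is correct and follows essentially the same route as the paper: reduce $\Delta(\theta_1,\theta_2,\mathcal{F})=S(\theta_2,\mathcal{F})-S(\theta_1,\mathcal{F})$ to a linear combination of $\arg L$-values, invoke Theorem~\ref{main theorem conditional mesoscopic and macroscopic} (respectively Theorem~\ref{Uncon mes or micro}) for Gaussianity of any finite linear combination, and read off the limiting covariance from the variance/covariance formulas built on \eqref{variance for imaginary} and \eqref{covariance for imaginary}. You make the Cram\'er--Wold step explicit by collecting endpoints into distinct values $\beta_k$ with coefficients $b_k$, whereas the paper directly expands $\operatorname{Cov}\bigl(\Delta(\theta_1,\theta_2,\mathcal{F}),\Delta(\theta_3,\theta_4,\mathcal{F})\bigr)$ as eight $\log\min\{\cdot\}$ terms via the computation of Section~\ref{subsection 6.2}; these are equivalent since both amount to checking that $\mathcal{V}_{\operatorname{Im}}(\vec b,\vec\tau,\kappa_\cF)=\tfrac{1-\delta}{2}\bigl(\sum_k b_k^2\bigr)\log\kappa_\cF+O(1)$. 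The realization $Z(\alpha_1,\alpha_2)=Y_{\alpha_2}-Y_{\alpha_1}$ with $Y_\alpha$ i.i.d.\ $N(0,\tfrac12)$ is a clean way of packaging the verification of \eqref{correlation structure} that the paper does not record explicitly. One small point worth flagging: your handling of $\beta_k=0$ (dropping that coefficient because $\arg L(\tfrac12,\mathcal{F})$ is deterministic) in fact shows that for $\alpha_1=\alpha_3=0$ with $\alpha_2\neq\alpha_4$ the normalized covariance tends to $0$, not $\tfrac12$, so the table in \eqref{correlation structure} should be understood as applying when the shared endpoint is strictly positive --- your version is the correct one, and the paper's proof, which treats $\theta_1=\theta_3=0$ identically to $\theta_1=\theta_3>0$, glosses over this.
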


The above theorem can be interpreted as follows. $Z(\alpha_1, \alpha_2)$ and $Z(\alpha_3, \alpha_4)$ are independent as long as the interval $[\alpha_1, \alpha_2]$ is strictly included in $[\alpha_3, \alpha_4]$, but they correlate if the inclusion is not strict.


\subsubsection{Future direction of research on extreme values}\label{subsubsec:future}

The preceding theorems reveal a log-correlated Gaussian field structure for the orthogonal and symplectic families of $L$-functions over function fields. The covariance structure is likely governed by the real part of the Dirichlet polynomial given in \eqref{correlation of reals} at various shifts. For instance, due to our computations in the proof of Proposition \ref{Gaussian for real part of DP}, the covariance between
$\log |L(1/2+it_1, \mathcal{F})|$ and
$\log |L(1/2+it_2, \mathcal{F})|$ is expected to be
\begin{equation}\label{cov struc}
\frac12
\log \min\left\{\kappa_{\mathcal{F}}, \frac{1}{|t_1 - t_2|}\right\}
+
\frac12\log \min\left\{\kappa_{\mathcal{F}}, \frac{1}{|t_1 + t_2|}\right\}
,
\end{equation}
whereas for the unitary family of Riemann zeta functions,  as shown in \cite{Bou}, one has for $T$ is chosen uniform from the interval $t\in [T, 2T]$, that
\[
\mathrm{Cov}\!\left(\log\big|\zeta\big(\tfrac12+it+it_1\big)\big|,\; \log\big|\zeta\big(\tfrac12+it+it_2\big)\big|\right)
=
\frac12 \log \min\left\{\log T,\; \frac{1}{|t_1 - t_2|}\right\}.
\]
 The appearance of $|t_1 + t_2|$ in~\eqref{cov struc} reflects the symmetry induced by the conjugate-pair structure of zeros in the orthogonal and symplectic families. In contrast, Bourgade’s covariance formula for the Riemann zeta function depends solely on $|t_1 - t_2|$ and such covariance structures are characteristic of log-correlated Gaussian fields.

Using this log-correlated framework for $\zeta\big(\tfrac12+it\big)$, Arguin, Belius, Bourgade, Radziwi{\l\l} and Soundararajan initiated a rigorous investigation of a conjecture of Fyodorov, Hiary and Keating (FHK) from \cite{FHK1, FHK2} concerning the maximum of $\zeta\big(\tfrac12+it\big)$ on random intervals of fixed length. They established the main term predicted by the conjecture~\cite{ABBRS}. The conjecture in its full form was later proven in the works of Arguin, Bourgade and Radziwi{\l \l} \cite{ABR20, ABR23}.

A key conceptual ingredient in these developments is the branching random walk analogy, which arises from the log-correlated covariance structure as depicted in \cite[Figure~1]{ABBRS}. A study of the maximum of the characteristic polynomials of random unitary matrices towards the FHK conjecture was conducted in \cite{ABB}. In contrast to the unitary case, for each of the orthogonal and symplectic random matrix ensembles, the corresponding branching random walk is expected to differ substantially, and may take the form of an inhomogeneous branching random walk arising from Dyson Brownian motion as described in \cite[Section~1.2.1]{CipLan} by Cipolloni and Landon. In \emph{loc cit}, they prove a result towards the FHK conjecture for the maximum of the characteristic polynomials of i.i.d. random nonHermitian matrices. This is a recent development that makes it compelling to study the FHK conjecture for the orthogonal and symplectic families of $L$-functions that we study in this paper.

In our setting, in view of \cite{CMN}, a natural formulation of the FHK conjecture is as follows.  
Let $\Delta$ be fixed. As $n\to\infty$, for $D$ ranging over either $\mathcal{H}_n$ or $\mathcal{H}_n^{\Delta}$,
\[
\max_{\theta\in[0,\pi)} 
\log \bigl|\mathcal{L}(q^{-1/2} e(\theta), \mathcal{F})\bigr|
=\sqrt{\frac{2}{\beta}}
\Big(\log n - \frac{3}{4}\log\log{n} + o(1)\Big),
\]
where $\beta$ denotes the Dyson index which is $4$ for the symplectic family, and $1$ for the orthogonal family. It is worth noting that, for the families we consider, establishing such a result for microscopic shifts likely requires the assumption of a low lying zeros hypothesis for the real part of the logarithm. For the imaginary part, at present it is not known whether Gaussian fluctuations are exhibited at microscopic shifts. This stands in contrast with the work of Najnudel~\cite{Naj}, who obtained the leading term for the maximum as proposed by the FHK conjecture both for the real and imaginary parts of $\log \zeta\big(\tfrac12+it\big)$.

\subsubsection{Towards ratio conjecture} 
The ratio conjecture for the symplectic family of quadratic Dirichlet L-functions associated with hyperelliptic curves was formulated in \cite[Conjecture 6]{AK}. Our main theorem allows one to directly obtain a Selberg-type central limit theorem for the ratio of these L-functions. This, in turn, provides a useful framework for studying the ratio conjecture, since the analysis requires upper bounds for the large deviations associated with the central limit theorem (see \cite{Sound2009}). In particular, our result gives a heuristic prediction for the order of magnitude appearing in the conjecture using \eqref{variance for real} and \eqref{variance for imaginary}, in accordance with the discussion on page 2 of \cite{Sound2009}.  
\subsection{Random Matrix Theory Perspective}\label{RMT side}

Connections between the fields of analytic number theory and random matrix theory equips us with more tools and conjectures to study logarithms of $L$-functions. These connections are based on Montgomery's work~\cite{Mont} on pair correlation of ordinates of nontrivial zeros of the Riemann zeta function $\zeta(s)$ and an observation of Dyson that the same pair correlation structure is exhibited by eigenangles of random unitary matrices in $\mathcal{U}(N)$ as $N$ gets large.

Keating and Snaith \cite{KeatingandSnaith} studied the correspondence between nontrivial zeros of $\zeta(s)$ with ordinates of size $T$ and the eigenvalues of $n\times n$ unitary matrices in $\mathcal{U}(N)$, where $N$ is of order $\log T$. In particular, letting $Z(U_n, \theta)$ denote the characteristic polynomial of a matrix $U$ in $\mathcal{U}(N)$, they showed that the real and imaginary parts of $\log Z(U, \theta)/(1/2)\log N$ have independent standard Gaussian distributions as $N\to \infty$. This coincides precisely with Selberg’s theorem. In the theory of spectral statistics, the natural parameter is the mean eigenvalue spacing. For the eigenphases $\theta_n$ of $U$, this is $2\pi/N$.
Similarly, the mean spacing between the zeros $t_n$ of $\zeta(s)$ at height $T$ along the critical line is $2\pi/ \log(T/2\pi)$. 

Various statistical tests based on random matrix theory have been applied to both local properties (such as eigenvalue spacings) and global properties (such as moments and variances). As we will see, for local properties, when suitably normalized, eigenphases of random unitary matrices exhibit an excellent fit to the ordinates of zeta zeros. However, for global properties, this correspondence can sometimes break down. Still, Conrey et al. \cite{Conreyetal} were able to derive detailed heuristic asymptotic expansions for moments of $\zeta(s)$ via a random matrix theory approach.

As for the local properties, Wieand \cite{Wie, Wiethesis} investigated the joint distribution of eigenvalues of random elements in $\mathcal{U}(N)$ across different intervals on the unit circle. She found that, when normalized by mean and variance, the number of eigenvalues in these intervals follows a characteristic correlation structure given in \eqref{correlation structure}. Building on this work, Coram and Diaconis \cite{CD} conducted numerical tests on the zeros of the zeta function, which Bourgade \cite{Bou} later confirmed. 

Later in \cite[Theorem 6.1]{DE}, Diaconis and Evans revisited Wieand’s results on the number of eigenvalues within a fixed arc and reaffirmed the correlation structure in \eqref{correlation structure} by using an elegant and elementary approach based on linear combinations of traces.

The statistical properties of eigenvalues of random matrices from classical compact groups, particularly the unitary, symplectic and orthogonal groups, exhibit striking similarities to those of zeros of families of $L$-functions (see~\cite{KatzSarnak2}). In addition to the unitary family, Keating and Snaith \cite{KeatingandSnaith} also studied the distribution of the logarithm of characteristic polynomials at the symmetry point $(\theta=0)$ for symplectic and orthogonal families of random matrices. Regarding the local eigenvalue statistics of symplectic and orthogonal matrices, Diaconis and Evans~\cite[Section~8]{DE} suggested, and provided a sketch of the argument, that the correlation structure \eqref{correlation structure} should hold. Wieand’s approach~\cite{Wie} does not apply in this setting because of the complicated behavior of the corresponding Toeplitz matrices. Motivated by these insights from the random matrix theory side, we were motivated to investigate the $L$-functions side and determine whether similar local statistics of zeros arise. In fact, in Corollary \ref{mesoscopic fluctuation}, we confirm that the same phenomenon holds for our symplectic and orthogonal families of $L$-functions over function fields which will be introduced in Sections \ref{sec:L fnc s} and \ref{sec:L fnc o}. 

Let $A$ be a random matrix chosen from the set of symplectic matrices $\text{USp(2N)}$ equipped with the Haar measure. Its eigenvalues are known to lie on the unit circle and to come in conjugate pairs as 
\[
e^{i\theta_1}, e^{-i\theta_1}, \ldots, e^{i\theta_N}, e^{-i\theta_N}.
\]
The characteristic polynomial of $A$ is given
by
\begin{equation}\label{characterristic poly}
Z_{A}(\theta)=\prod_{j=1}^N\left(1-e^{i(\theta_j-\theta)}\right)\left(1-e^{-i(\theta_j+\theta)}\right).
\end{equation}
Supposing that $Z_{A}$ is a good model for an $L$-function from a symplectic family, we will compute some mean values related to its logarithm in order to later compare them with those in our theorems in Section \ref{different regimes}.

By Mercator series expansion and \cite[Lemma 2]{KeatingOdgers}, we obtain the following.
\begin{align*}
\mathbb{E}_{\text{USp(2N)}}\big(\log Z_A(\theta)\big)
&=-\sum_{k=1}^{\infty}\frac{e^{-ik\theta}}{k}\mathbb{E}_{\text{USp(2N)}}\sum_{j=1}^N\big(e^{ik\theta_j}+e^{-ik\theta_j}\big)\\
&=-\sum_{k=1}^{\infty}\frac{e^{-ik\theta}}{k}\mathbb{E}_{\text{USp(2N)}}\big(\operatorname{Tr}(A^k)\big)
=\frac12\sum_{k=1}^{N}\frac{e^{-2i\theta k}}{k}.
\end{align*}
Then, Lemma \ref{logarithmic cosine sum} directly implies that 
\[
\mathbb{E}_{\text{USp(2N)}}\big(\operatorname{Re} \log Z_A(\theta)\big)
=\frac{1}{2}\log \min\Big\{N, \frac{1}{2|\theta|}\Big\}
\quad \text{and} \quad
 \mathbb{E}_{\text{USp(2N)}}\big(\operatorname{Im} \log Z_A(\theta)\big)=O(1).
\]
These coincide with the {\it means $($averages$)$} appearing in Theorem~\ref{main theorem conditional mesoscopic and macroscopic} for the real and imaginary parts of the logarithm of $L$-functions over function fields. From the above, we also observe that computing the mean of the logarithm of characteristic polynomials over the family $\mathrm{USp}(2N)$ amounts to computing the mean of the trace of any power of the matrix $A$. Now, on the $L$-function side, we have an analogous product expression to~\eqref{characterristic poly}, which is $\eqref{eq:L as polyl}$, but it lacks a well-structured description of the zeros of $L$-functions. So, we will use Dirichlet polynomial approximations of $\log L(s,\chi_D)$ and compute their moments instead.  

Now, we proceed to compute the joint mean of the characteristic polynomial at two different shifts, say $t_1, t_2\in [0, \pi)$. Using \cite[Lemma 2]{KeatingOdgers} about the mean of the product of trace of different powers of $A$, we find that
\begin{align*}
&\mathbb{E}_{\text{USp(2N)}}\big(\operatorname{Re}\left( \log Z_A(t_1) \log Z_A(t_2)\right)\big)
\\
&=\frac14 \sum_{\epsilon_1, \epsilon_2\in \{-1, +1\}}\sum_{j, k=1}^{\infty}\frac{e^{i(\epsilon_1 jt_1+ \epsilon_2 k t_2)}}{jk}\mathbb{E}_{\text{USp(2N)}}\big(\operatorname{Tr}(A^j)\operatorname{Tr}(A^k)\big)
\\
&=\frac14 \sum_{\epsilon_1, \epsilon_2\in \{-1, +1\}}\bigg[\sum_{j=1}^{2N}\frac{e^{ij(\epsilon_1 t_1+ \epsilon_2 t_2)}}{j}+2N\sum_{j>2N}\frac{e^{ij(\epsilon_1 t_1+ \epsilon_2 t_2)}}{j^2}-\sum_{N+1\leq j\leq 2N}\frac{e^{i(\epsilon_1 t_1+ \epsilon_2 t_2)}}{j^2}\\
& \hspace{3cm} +\sum_{\substack{1\leq j\leq k\leq N}}\frac{e^{i(\epsilon_1 t_1 j+ \epsilon_2 t_2 k)}}{jk}-\sum_{\substack{j, k=1\\ j-k>N+1\\ k-j\leq N}}^{\infty}\frac{e^{i(\epsilon_1 t_1 j+ \epsilon_2 t_2 k)}}{jk}\bigg],
\end{align*}
where the sums are coming from correlation of traces. One can show that contributions of the second, third and fifth sums are $O(1)$. Therefore, at the end, again using Lemma \ref{logarithmic cosine sum}, we obtain
\begin{align*}
&\mathbb{E}_{\text{USp(2N)}}\big(\operatorname{Re}\left( \log Z_A(t_1) \log Z_A(t_2)\right)\big)
\\
&=\frac12 \sum_{j\leq 2N}\frac{\cos ((t_1+t_2)j) +\cos ((t_1-t_2)j)}{j}
+\bigg(\sum_{j\leq N}\frac{\cos(jt_1)}{j}\bigg)\bigg(\sum_{j\leq N}\frac{\cos (jt_2)}{j}\bigg)+O(1)
\\
&=\frac{1}{2}\log\bigg( \min\Big\{N, \tfrac{1}{|t_1-t_2|}\Big\} 
\min\Big\{N, \tfrac{1}{|t_1+t_2|}\Big\}\bigg)
+\frac14 \log \min\bigg\{N, \frac{1}{2|t_1|}\bigg\}\log \min\bigg\{N, \frac{1}{2|t_2|}\bigg\}+O(1).
\end{align*}
In a similar manner, we have
\[
\mathbb{E}_{\text{USp(2N)}}\big(\operatorname{Im}\left(\log Z_A(t_1) \log Z_A(t_2)\right)\big)
=\frac{1}{2}\log\Big( \min\big\{N, \tfrac{1}{|t_1-t_2|}\big\}\Big/
\min\big\{N, \tfrac{1}{|t_1+t_2|}\big\}\Big)+O(1).
\]
Likewise, one finds
\[
\mathbb{E}_{\text{USp(2N)}}\big(\operatorname{Re} \log^2 Z_A(t)\big)
=\frac12 \log N+\frac12 \log \min\big\{N, \tfrac{1}{2|t|}\big\}
+\frac14 \log^2 \min\big\{N, \tfrac{1}{2|t|}\big\}+O(1), 
\]
and 
\[
\mathbb{E}_{\text{USp(2N)}}\big(\operatorname{Im} \log^2 Z_A(\alpha)\big)
=\frac12 \log N-\frac12 \log \min\big\{N, \tfrac{1}{2|\alpha|}\big\}+O(1).
\]
The above computations provide the following expressions for the variance and covariance.  
\[
\text{Var}\big(a_1\operatorname{Re} \log Z_A(t_1)+ a_2 \operatorname{Re} \log Z_A(t_2)\big)
=\mathcal{V}_{\operatorname{Re}}(\vec{a}, \vec{t}, N)+O(1),
\]
and 
\[
\operatorname{Cov}\big(\operatorname{Re} \log Z_A(t_1), \operatorname{Re} \log Z_A(t_2)\big)
=\frac{1}{2}\log\left( \min\left\{N, \tfrac{1}{|t_1-t_2|}\right\} 
\min\left\{N, \tfrac{1}{|t_1+t_2|}\right\}\right),
\]
where $\vec{a} = (a_1, a_2)$, $\vec{t} = (t_1, t_2)$, and $\mathcal{V}_{\operatorname{Re}}(\vec{a}, \vec{t}, N)$ is given as in \eqref{variance for real}. This covariance also matches the covariance found on the $L$-function side, given by \eqref{eq:cov RXtjD}. A similar result holds for the imaginary parts of the logarithms of linear combinations of characteristic polynomials at different shifts as shown in \eqref{covariance for imaginary}.

For a sketch of the proof of the multidimensional Gaussian behavior of the logarithms of characteristic polynomials, we refer the reader to \cite[Section~8]{DE}, which uses a different method.

 We note that following the arguments above, one can also derive the same structure for the orthogonal family.

\subsection{Notation}

Throughout, the following notations apply. $k$ will denote a positive fixed integer that is the number of shifts $t_j$. The $k$-dimensional real vector $\vec{a}$ will be fixed and consist of coefficients of the linear combination. $n$ will be used for the degree of a monic and square-free polynomial $D$ that belongs to $\mathcal{H}_n$. $m$ will denote the conductor of $E\otimes \chi_D$. $q$ will denote a power of an odd prime and it will be fixed. We will use the commonplace notation $s=\sigma+it$ for a complex variable $s$, and sometimes will use $\sigma_0$ to denote a real number. The ordinate $t$ unfortunately clashes with the $t$ in $\mathbb{F}_q[t]$, and both notations are standard. Note that $\mathcal{F}$ will denote a quadratic character $\chi_D$ or the twisted elliptic curve $E\otimes \chi_D$ according to the context.


\subsection{Plan of the Paper}

In the next section, we will provide more detailed information about the $L$-functions in our families. Section \ref{technical lemmas} contains several technical lemmas for both families, which will be used later in the proofs of the main theorems. In Section \ref{discrepancy}, we investigate the difference between the values at critical line and near the critical line of the real (and imaginary) part of $\log L(s, \mathcal{F})$.

In Section \ref{moments of lin comb}, we estimate the moments of linear combinations of Dirichlet polynomials associated with both families of $L$-functions over $\mathbb{F}_q[t]$. We prove Theorems \ref{Unconditional CLT for real near to half line:re}--\ref{main theorem conditional microscopic}, \ref{main theorem conditional mesoscopic and macroscopic}, and \ref{Uncon mes or micro} in Section \ref{proof of main theorems}. Finally, in the last section, we establish unconditional upper and lower bounds for the distribution of the logarithms of such $L$-functions, and prove Corollary \ref{mesoscopic fluctuation}.


\section{Background}\label{prelimsec}

\subsection{Set up} Let $q=p^e$ be fixed for an odd prime number $p$ and a positive integer $e$. The corresponding hyperelliptic ensemble $ \CMcal{H}_{n, q} $ or  $ \CMcal{H}_{n} $ is defined as
\[
\CMcal{H}_n = \{D \in \mathbb{F}_q[t]: D \text{ is monic and square-free with degree }n\}.
\]
It is not hard to show that $|\CMcal{H}_n| = q^{n-1}(q-1)$. 

Note that we sometimes will refer to the set $\CMcal{H}= \bigcup_{n\geq 1} \CMcal{H}_n $.

For each polynomial $D \in \CMcal{H}_n$, there is a nonsingular curve $y^2 = D(t)$ of genus $g=\frac{n-1-\eta}{2}$, where $\eta$ is defined based on the parity of $n$ as in \eqref{eq:eta}.

We also define the following sets of polynomials.
\[
\CMcal{M}_n= \{f \in \mathbb{F}_q[t]: f \text{ is monic and has degree }n\}
\]
and 
\[
\CMcal{P}_n = \{P \in \mathbb{F}_q[t]: P \text{ is monic, irreducible and has degree }n\}.
\]
We will use the notations
\[
\CMcal{M}_{\leq n} = \bigcup_{j\leq n} \CMcal{M}_j , \quad \CMcal{M}= \bigcup_{j\geq 1} \CMcal{M}_j,
\quad \text{and} \quad
\CMcal{P}_{\leq n} = \bigcup_{j\leq n} \CMcal{P}_j , \quad \CMcal{P}= \bigcup_{j\geq 1} \CMcal{P}_j.  
\]
For convenience, for each $f \in \CMcal{M}$, we set the notations
\[
d(f) := \deg{f} \quad \text{and} \quad |f|:= q^{d(f)}.
\]
In this setting, the von Mangoldt function is defined by
\[
\Lambda(f) :=
\begin{cases}
d(P) \quad &\text{if } \, f=P^k \text{ for some } P\in \CMcal{P},\\
0 \quad &\text{else}.
\end{cases}
\]
One easily sees that $|\CMcal{M}_n|=q^n$. By the prime polynomial theorem~\cite[Theorem 2.2]{Rosen},
\[
|\CMcal{P}_n| 
= \frac{q^n}{n}+ O \Big( \frac{q^{n/2}}{n}\Big).
\]
The zeta function of the ring of polynomials $\mathbb{F}_q[t]$ is denoted by $\zeta_{\mathbb{F}_q[t]}$ and
\[
\zeta_{\mathbb{F}_q[t]}(s)= \sum_{f\in \CMcal{M}} \frac{1}{|f|^s}
=\prod_{P\in \CMcal{P}} \Big( 1-\frac{1}{|P|^s} \Big)^{-1}  \quad \text{for }\, \operatorname{Re}(s)>1.
\]

For each $D\in \mathbb{F}_q[t]$, one can define a quadratic character on $ \mathbb{F}_q[t]$ as follows. For each $P\in \CMcal{P}$, there is a quadratic character $\left( \frac{f}{P}\right)$ given by
\[
\Big( \frac{f}{P}\Big)
=
\begin{cases}
1  \,\, &\text{if } f \text{ is a square modulo } \, P \,\, \text{and } \, P \nmid f,\\
-1  \,\, &\text{if }  f \text{ is not a square modulo } \, P \,\, \text{and } \, P \nmid f,\\
0 \,\, &\text{if } \, P \mid f.
\end{cases}
\]
This can be extended multiplicatively to define $\big( \tfrac{D}{f}\big)$ for each $D\in  \CMcal{H}$. We set
\[
\chi_D(f):= \Big( \frac{D}{f}\Big).
\]


\subsection{$L$-functions over Hyperelliptic Curves} 
\label{sec:L fnc s}

Throughout for simplicity, we assume $q \equiv 1 \pmod{4}$ in order to make use of quadratic reciprocity.
 Given a character $\chi_D$ for $D\in \CMcal{H}_n $ as in the above, the corresponding $L$-function is defined as
\begin{equation}\label{eq:Euler product symplectic}
	L(s, \chi_D) = 
	\sum_{f\in \CMcal{M}} \frac{\chi_D(f)}{|f|^s}
	=\prod_{P\in \CMcal{P}} \Big( 1-\frac{\chi_D(P)}{|P|^s} \Big)^{-1}  \quad \text{for } \,  \operatorname{Re}(s)>1.
\end{equation}
By introducing the variable $u=q^{-s}$, we instead write
\[
\mathcal{L}(u, \chi_D)
= \sum_{f \in \CMcal{M}} \chi_D(f) u^{d(f)}
= \prod_{P \in \CMcal{P}} \big(1-\chi_D(P) u^{d(P)}  \big)^{-1} \quad  \text{for }\, |u| < \frac{1}{q}. 
\]
Let $D$ be a monic square-free polynomial. Then $\mathcal{L}(u, \chi_D)$ has a zero at $u=1$, which is called the trivial zero, if and only if $d(D)$ is even. With the notation in \eqref{eq:eta}, 
we can write
\begin{equation}\label{eq: L and L ast}
	L(s, \chi_D)
	= \mathcal{L}(u, \chi_D)
	= (1-u)^\eta  \mathcal{L}^*(u, \chi_D)
	= (1-q^{-s})^\eta L^*(s, \chi_D).
\end{equation}
Here $\mathcal{L}^*(u, \chi_D)$ is a polynomial of degree $2g=d(D)-1-\eta$ (see~\cite[Proposition 4.3]{Rosen}), which we write as
\begin{equation}\label{eq:L as polyl}
	\mathcal{L}^*(u, \chi_D)
	= \prod_{j=1}^{2g} \big( 1-u\sqrt{q} \alpha_j \big).
\end{equation}
By the Riemann hypothesis, proven by Weil~\cite{Weil}, all zeros of $\mathcal{L}^*$ are on the circle $|u|=1/\sqrt{q}$.

We may therefore set, for $e(x) = \exp(2\pi i x)$, 
\[
\alpha_j = e(-\theta_{j, D}) \quad \text{for each } \, j=1, 2, \dots, 2g .
\]
The quantities $\theta_{j, D}$ are referred to as the eigenphases of $\mathcal{L}^*$.


\subsection{$L$-functions of Quadratic Twists of Elliptic Curves}\label{sec:L fnc o}

Throughout our discussion exclusively for this family, we consider $q$ is a prime power with $q \equiv 1 \pmod 4$ and $(q, 6) = 1$. We refer the reader to \cite{BFKRG, BH, CL} for further details in this direction.

Given an elliptic curve $E: y^2=x^3+Ax+B$ and a polynomial $D\in \mathbb{F}_q[t]$, the elliptic curve twisted by $D$ is defined as 
\[
E_D: y^2=x^3+AD^2 x+BD^3.
\]
As $D$ varies, these equations give distinct elliptic curves if and only if $D$'s are square-free and $(D, \Delta)=1$, where $\Delta=4A^3+27B^2$ is discriminant of the elliptic curve $E$. We thus define the set
\[
\mathcal{H}^{\Delta}_{n} := \left\{ D \in \mathbb{F}_q[t] :  
\begin{array}{l}
D \text{ is monic and square-free,} \deg D = n, 
(D, \Delta) =1 
\end{array} 
\right\}.
\]

The conductor of the curve $E$ is defined by
\[
N_E=\prod_{P \text{ irreducibles }}P^{f_P(E)},
\]
where 
\[
f_P(E)=\begin{cases}
0 & \mbox{if } \, E \text{ has good reduction at } P,\\
1 & \mbox{if } \, E \text{ has multiplicative reduction at } P,\\
2 & \mbox{if } \, E \text{ has additive reduction at } P,
\end{cases}
\]
including the prime at infinity. Write $N_E=M_E A_E^2$, where $M_E$ is the product of primes of multiplicative bad reduction and $A_E$ is the product of primes of additive bad reduction.
 
The {\it normalized} $L$-function associated to the elliptic curve $E/K$ has the following 
Dirichlet series.
\[
L(s, E) := \mathcal{L}(u, E) = \sum_{f \in \mathcal{M}} \lambda(f) u^{d(f)}
\quad \text{for } \, \operatorname{Re}(s) > 1, \, \text{ with } \, u = q^{-s}. 
\]
This $L$-function is a polynomial in $u$ with integer coefficients 
of degree
\[
 \deg \big( \mathcal{L}(u, E) \big) 
= d(M_E) + 2 d(A_E) - 4.
\]

Given an elliptic curve $E$, a twisted elliptic curve $L$-function is defined via the Euler product 
\begin{equation}\label{eq:L defn elliptic}
L(s, E\otimes \chi_D)=\prod_{P}\left(1-\frac{\alpha(P)\chi_D(P)}{|P|^s}\right)^{-1}\left(1-\frac{\beta(P)\chi_D(P)}{|P|^s}\right)^{-1}  \quad \text{for }\, \operatorname{Re}(s)>1,
\end{equation}
where $\alpha(P)+\beta(P)=\lambda(P)$ with a complex number $\alpha(P)$ and $\beta(P)$ of magnitude $1$, and
\[
\alpha(P)\beta(P)=\begin{cases}
1 & \mbox{if } \, P\nmid N_E,\\
0 & \mbox{if } \, P\mid N_E.
\end{cases}
\]
Note that since $E$ has trivial nebentypus then $\alpha(P)$ and $\beta(P)$ comes either as purely complex conjugate pairs or both real numbers, so that $\lambda\in \mathbb{R}$.   
An equivalent form is
\begin{align*}
\mathcal{L}(u, E\otimes \chi_D)
&=\prod_{P\nmid N_E}
\left(1-\alpha(P)\chi_D(P)u^{d(P)}\right)^{-1}\left(1-\beta(P)\chi_D(P)u^{d(P)}\right)^{-1}
 \prod_{P\mid N_E} \left(1-\lambda(P)\chi_D(P)u^{d(P)}\right)^{-1}.
\end{align*}
This is a polynomial of degree 
\begin{align}\label{m and n connection for elliptic curve}
m=2d(D)+d(N_E)-4,
\end{align}
and satisfies the functional equation
\begin{equation} \label{eq:functional_eq}
\mathcal{L}(u, E \otimes \chi_D) 
= \epsilon\, (\sqrt{q} u)^{m}\, \mathcal{L}\Big(\frac{1}{qu}, E \otimes \chi_D\Big),
\end{equation}
where
\[
\epsilon = \epsilon(E_D) = \epsilon_{d(D)}\, \epsilon(E)\, \chi_D(M_E).
\]
Here $\epsilon_{d(D)} \in \{\pm 1\}$ is a sign depending on the parity of $d(D)$ and $\epsilon(E)$ is the root number of the elliptic curve $E$. See \cite[Lemma 2.3 and Proposition 4.3]{BH} for more details.

Now, taking logarithms of both sides of \eqref{eq:L defn elliptic} gives
\[
\log L\big(s, E\otimes \chi_D\big)
= -\sum_{P}\log \left(1-\frac{\alpha(P)\chi_D(P)}{|P|^s}\right)-\sum_{P}\log \left(1-\frac{\beta(P)\chi_D(P)}{|P|^s}\right).
\]
Differentiating both sides of the above, we obtain
\[
\frac{L'}{L}\big(s, E\otimes\chi_D\big)=-\log q\sum_{n\geq 0}\frac{\lambda_D(n)}{q^{ns}},
\]
where
\[
\lambda_D(n)=\sum_{j\mid n}\sum_{j d(P)=n}d(P)\left(\alpha(P)^j+\beta(P)^j\right)\chi_D(P)^j.
\]
It is further known that, see~\cite[Proposition 4.1]{CL},
\[
L\big(s, E\otimes \chi_D\big)
=\prod_{j=1}^m \big(1-\alpha_j q^{\frac12-s}\big),
\]
for some $\alpha_j$ of unit modulus. From this formula, the logarithmic derivative can be written as
\[
\frac{L'}{L}
\big( s, E\otimes \chi_D\big)
=  \bigg(
-\frac{m}{2}+
\sum_{j=1}^m \Big(\frac{1}{1-\alpha_j q^{\frac12-s}}-\frac12
\Big)\bigg) 
\log{q}.
\]
From Rankin-Selberg convolution and Fourier coefficients of modular forms, using information at irreducibles, for, say $P\in \mathcal{P}$,  $\lambda(P)^2=\lambda(P^2)+1$, one can find that  
\[
\mathcal{L}\big(u, E\otimes E\big)
=\sum_{f\in \mathcal{M}}\lambda(f)^2u^{d(f)}=\mathcal{L}(u, \text{sym}^2(E))\zeta_{\mathbb{F}_q[t]}(u),
\]
where $u=q^{-s}$ and $\mathcal{L}\big(u, \text{sym}^2(E)\big)=\sum_{f\in \mathcal{M}}\lambda(f^2)u^{d(f)}$ is the symmetric square $L$-function over function fields. This leads to the estimate
\[
\sum_{f\in \mathcal{M}_n}\lambda(f)^2=q^n+O(q^{n/2}).
\]
Therefore, by using the partial summation formula, we obtain
\begin{align}\label{lambda on primes}
\sum_{d(P)\leq X}\frac{\lambda(P)^2}{|P|}=\log X+O(1).
\end{align}

For any $D\in  \mathcal{H}^{\Delta}_{n}$, we see that
\[
\epsilon(E_D)=\epsilon_{n}\epsilon(E) \chi_D(M_E),
\]
 
By \eqref{eq:functional_eq}, $\epsilon(E_D)=-1$ implies that $L\big( \frac12, E\otimes \chi_D\big)=0$. In order to study the logarithmic behavior of these $L$-functions near the central point, we focus on the slightly modified version of $\mathcal{H}^{\Delta}_{n}$ below that consists of polynomials with root number $1$.
\begin{align}\label{positive family}
\mathcal{H}^{\Delta, +}_{n}:
=\{D\in  \mathcal{H}^{\Delta}_{n}: \chi_D(M_E)=\epsilon_{n}\epsilon(E)\}.
\end{align}
From \cite[Lemma 3.4]{MS}, we know that
\[
| \mathcal{H}^{\Delta, +}_{n}|=
\begin{cases}
\frac{1}{2}| \mathcal{H}^{\Delta}_{n}|+O_{\Delta}(q^{n/2}) & \mbox{if } \, M_E\neq 1,\\
| \mathcal{H}^{\Delta}_{n}| & \mbox{if } \, M_E=1 .
\end{cases}
\]
However, directly working with $\mathcal{H}^{\Delta, +}_{n}$ is challenging. To overcome this, we decompose the family as
\begin{align}
&\mathcal{H}^{\Delta}_{n}
=\bigcup_{(C, N_E)=1} \mathcal{H}^{\Delta}_{n}(C), \notag
\\
&\mathcal{H}^{\Delta, +}_{n}=\bigcup_{\substack{(C, N_E)
		=1
		\\ 
		\chi_C(M_E)
		=\epsilon_{n}\epsilon(E)}} \mathcal{H}^{\Delta}_{n}(C), \label{relation bet. full fam and subfam}
\end{align}
where 
\[
\mathcal{H}^{\Delta}_{n}(C)=\{D\in  \mathcal{H}^{\Delta}_{n}: D\equiv C \pmod{N_E}\}.
\]
This decomposition of polynomials into arithmetic progressions allows us to apply orthogonality (see Lemmas \ref{sum over squares} and \ref{sum over non-squares}).

Since $\chi_{M_E}$ is a Dirichlet character of conductor  $M_{E}$, it is constant on each $ \mathcal{H}^{\Delta}_{n}(C)$ when $M_E\mid N_E$. Thus, it suffices to focus on these individual subsets of certain arithmetic progressions.

From~\cite[Lemma 6.2]{CL}, for $(C, N_E)=1$ and any $\epsilon>0$, one has
\begin{align}\label{cardinality in AP}
| \mathcal{H}^{\Delta}_{n}(C)|=\frac{| \mathcal{H}_{n}|}{\big|\left(\mathbb{F}_q[t]/N_E\right)^*\big|}\prod_{P\mid N_E}\frac{|P|}{|P|+1}+O_{E, q, \epsilon}\left(q^{(\frac14+\epsilon)n}\right),
\end{align}
where $\left(\mathbb{F}_q[t]/N_E\right)^*$ is the multiplicative subgroup of $\mathbb{F}_q[t]$ modulo $N_E$.


\section{Some Technical Lemmas}\label{technical lemmas}
In this section, we present some lemmas which will later be crucial to the proofs of the main theorems.
We first note the following lemma.


\begin{lem}\label{upper bound for truncated sum over primes }
	Let $K\geq 2$ and $\sigma_0 >\tfrac12$ be such that $K\big(\sigma_0-\tfrac12\big)<\frac{1}{2\log q}$. Then we have
	\[
	\sum_{d(P)\leq K}\frac{1}{|P|^{2\sigma_0}}=\log K +O(1).
	\]
\end{lem}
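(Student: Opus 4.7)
\medskip
\noindent
\textbf{Proof plan.} The strategy is to group the irreducible polynomials by their degree and apply the prime polynomial theorem stated earlier in the excerpt, namely $|\mathcal{P}_n| = q^n/n + O(q^{n/2}/n)$. Writing $\delta := \sigma_0 - \tfrac12 > 0$, we have
\[
\sum_{d(P)\le K}\frac{1}{|P|^{2\sigma_0}}
=\sum_{n=1}^{K}\frac{|\mathcal{P}_n|}{q^{2\sigma_0 n}}
=\sum_{n=1}^{K}\frac{1}{n\,q^{2\delta n}}+O\!\left(\sum_{n=1}^{K}\frac{1}{n\,q^{n/2+2\delta n}}\right).
\]
The error sum is bounded by $\sum_{n\ge 1} n^{-1} q^{-n/2} = O(1)$ since $q\ge 3$, which disposes of it immediately.

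For the main sum, the hypothesis $K\delta < 1/(2\log q)$ gives $0 \le 2\delta n\log q \le 2\delta K\log q < 1$ for every $n\le K$. Hence $q^{-2\delta n}\in [e^{-1},1]$, and moreover
\[
\bigl|q^{-2\delta n}-1\bigr|=1-e^{-2\delta n\log q}\le 2\delta n\log q .
\]
I would then write
\[
\sum_{n=1}^{K}\frac{1}{n\,q^{2\delta n}}
=\sum_{n=1}^{K}\frac{1}{n}+\sum_{n=1}^{K}\frac{q^{-2\delta n}-1}{n}.
\]
The first piece is the classical harmonic sum $\log K + \gamma + O(1/K) = \log K + O(1)$. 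The second piece is controlled by
\[
\sum_{n=1}^{K}\frac{|q^{-2\delta n}-1|}{n}\le \sum_{n=1}^{K}2\delta\log q = 2\delta K\log q < 1,
\]
so it is $O(1)$.

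Combining the two contributions yields the claimed asymptotic $\log K + O(1)$. The only delicate point is ensuring that the deviation of $q^{-2\delta n}$ from $1$ stays uniformly small across all $n\le K$, and that is precisely what the assumption $K(\sigma_0-\tfrac12)<1/(2\log q)$ buys us; everything else is bookkeeping.
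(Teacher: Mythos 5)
Your proof is correct and complete. The paper itself simply cites [DL, Lemma 3.4] without reproducing the argument, but the route you take — grouping by degree, applying the prime polynomial theorem to reduce to $\sum_{n\le K} n^{-1}q^{-2\delta n}$, and then using the hypothesis $K\delta<1/(2\log q)$ to bound the deviation from the harmonic sum via $1-e^{-x}\le x$ — is exactly the natural one and matches what that reference does.
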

\begin{proof}
	See \cite[Lemma 3.4]{DL}.
\end{proof}


\subsection{Orthogonality of characters over the families} 

Our computations for the symplectic family will need the following two lemmas. 


\begin{lem}\label{square term evaluation}
	Let $f\in \CMcal{M}_n$. Then 
	\[
	\frac{1}{|\CMcal{H}_{n}|}\sum_{D\in \CMcal{H}_{n}}\chi_D(f^2)
	=\prod_{P|f}\left(1+\frac{1}{|P|}\right)^{-1}+O(q^{-n}).
	\]
\end{lem}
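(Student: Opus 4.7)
The proof begins with the identity $\chi_D(f^2) = \chi_D(f)^2$. Since the Kronecker symbol $\chi_D(P)$ lies in $\{-1,0,1\}$ and vanishes precisely when $P \mid D$, we have $\chi_D(f)^2 = \mathbb{1}[(D,f) = 1]$. Hence the sum in the lemma reduces to a count of square-free monic polynomials of degree $n$ coprime to $f$:
\[
\sum_{D \in \mathcal{H}_n} \chi_D(f^2) = \#\{D \in \mathcal{H}_n : (D, f) = 1\}.
\]

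To evaluate this count, I would use the Euler-product identity
\[
\sum_{\substack{D \in \mathcal{H} \\ (D, f) = 1}} u^{d(D)} = \prod_{P \nmid f}\bigl(1 + u^{d(P)}\bigr) = \frac{1-qu^2}{1-qu}\,F(u), \qquad F(u) := \prod_{P \mid f}\bigl(1 + u^{d(P)}\bigr)^{-1},
\]
together with $\sum_{D \in \mathcal{H}} u^{d(D)} = \zeta_{\mathbb{F}_q[t]}(u)/\zeta_{\mathbb{F}_q[t]}(u^2) = (1-qu^2)/(1-qu)$. I would then extract the coefficient of $u^n$ via the decomposition
\[
\frac{(1-qu^2)F(u)}{1-qu} \;=\; F(1/q)\cdot\frac{1-qu^2}{1-qu} \;+\; \frac{(1-qu^2)\bigl(F(u)-F(1/q)\bigr)}{1-qu},
\]
where the second summand has a removable singularity at $u = 1/q$ and is therefore analytic in $|u| < 1$. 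The first summand contributes exactly $F(1/q)\cdot|\mathcal{H}_n|$ to the coefficient of $u^n$ for $n \geq 2$, and since $F(1/q) = \prod_{P \mid f}(1 + |P|^{-1})^{-1}$, this yields precisely the main term of the lemma after normalizing by $|\mathcal{H}_n| = q^{n-1}(q-1)$.

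For the error, I would bound the $n$-th Taylor coefficient of the analytic remainder via Cauchy's inequality on a circle $|u| = r$ with $r < 1$. Each local factor $(1 + u^{d(P)})^{-1}$ has Taylor coefficients in $\{-1, 0, 1\}$ and only $\omega(f)$ such factors appear, so the growth of the coefficients of $F$ is well-controlled; after subtracting the value $F(1/q)$ and multiplying by $1 - qu^2$, this yields the stated error $O(q^{-n})$ after dividing by $|\mathcal{H}_n|$.

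The main technical obstacle is extracting the sharp exponential error $O(q^{-n})$, since a naive Cauchy estimate on the analytic remainder produces only a polynomial-times-exponential bound of the form $O_f(n^{\omega(f)}/q^n)$. Sharpening this to a clean $O(q^{-n})$ requires exploiting the polynomial identity $\prod_P(1 - u^{d(P)}) = 1 - qu$ (a peculiarity of the function-field setting coming from $\zeta_{\mathbb{F}_q[t]}(u) = (1-qu)^{-1}$) and, if necessary, a partial-fraction analysis of $F$ at its singularities on the unit circle, to absorb the polynomial-in-$n$ factors into the implicit constant depending on $f$.
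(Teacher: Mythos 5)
The paper does not give a standalone proof of this lemma; it simply cites Lemma~3.7 of [BF] for odd $n$ and asserts the even case is analogous. Your direct generating-function argument is the natural one and almost certainly what underlies the cited result. The reduction $\chi_D(f^2)=\mathbb{1}[(D,f)=1]$, the Euler-product identity $\prod_{P\nmid f}(1+u^{d(P)})=\frac{1-qu^2}{1-qu}F(u)$ with $F(u)=\prod_{P\mid f}(1+u^{d(P)})^{-1}$, and the residue at $u=1/q$ producing the main term $F(1/q)\,|\mathcal{H}_n|$ are all correct.

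The concern you raise in your last paragraph is, however, a genuine one, and the partial-fraction / $\zeta_{\mathbb{F}_q[t]}$-identity trick you propose will not remove it. The remainder $\frac{(1-qu^2)(F(u)-F(1/q))}{1-qu}$ is analytic in $|u|<1$, but $F$ has poles accumulating on $|u|=1$, and whenever $\prod_{P\mid f}(1+u^{d(P)})$ has a root of multiplicity $m$ there (which happens, e.g., whenever $f$ has several prime divisors of odd degree, all of which kill $u=-1$), the corresponding partial fraction $c/(1-u/\zeta)^m$ contributes $\asymp n^{m-1}$ to the $n$-th coefficient. Equivalently, $[u^m]F=O(m^{\omega(f)-1})$, so the absolute error in extracting $[u^n]$ is $O(n^{\omega(f)-1})$, giving a relative error $O_f\big(n^{\omega(f)-1}q^{-n}\big)$ rather than a bare $O(q^{-n})$ with a universal constant. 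This polynomial loss is intrinsic to higher-order poles on $|u|=1$ and cannot be absorbed into a clean $O(q^{-n})$. The right reading is that the lemma's stated error suppresses an $f$-dependence; in the paper's only use of the bound (inside Lemma~\ref{moments of lambda polyl}, where $\omega(f)\le k\ll(\log n)^{1/3}$) the polynomial factor is comfortably swallowed, so nothing downstream is affected. But you should not expect to push Cauchy or partial fractions to the sharper constant-free $O(q^{-n})$; your instinct that something more is needed is better interpreted as evidence that the lemma's $O$-term is stated a bit loosely.
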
 

\begin{proof}
For $n=2g+1$, this is Lemma 3.7 of \cite{BF}. For even $n$, one can write down a similar proof. 
\end{proof}


\begin{lem}[P\'{o}lya--Vinogradov inequality]\label{polya-vinogradov inequality}
	For $\ell\in \CMcal{M}_n$ not a square polynomial, let $\ell=\ell_1 \ell_2^2$ with $\ell_1$ square-free. Then we get
	\[
\sum_{D\in \CMcal{H}_{n}}\chi_D(\ell)\ll_{\epsilon} q^{(\frac12+\epsilon)n}|\ell_1|^{\epsilon}.
	\]
\end{lem}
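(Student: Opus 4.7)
The plan is to combine three standard ingredients: M\"obius inversion to detect the square-freeness of $D$, quadratic reciprocity (clean because $q \equiv 1 \pmod 4$) to transfer the character, and Weil's Riemann Hypothesis for $L$-functions of quadratic characters over $\mathbb{F}_q[t]$ to bound the resulting coefficients. First I would decompose the character using multiplicativity of the Kronecker symbol: writing $\ell = \ell_1 \ell_2^2$ with $\ell_1 \neq 1$ square-free gives $\chi_D(\ell) = \chi_D(\ell_1)\,\mathbf{1}_{(D,\ell_2)=1}$, since $\chi_D(\ell_2)^2$ is the indicator of $(D,\ell_2)=1$.

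Next, detecting the square-freeness of $D$ via $\mu^2(D) = \sum_{B^2 \mid D} \mu(B)$ and writing $D = B^2 C$ with $C \in \mathcal{M}_{n-2d(B)}$, one notes that the character vanishes unless $(B,\ell)=1$, giving
\[
\sum_{D \in \mathcal{H}_n} \chi_D(\ell) = \sum_{\substack{B \in \mathcal{M}_{\leq n/2}\\ (B,\ell)=1}} \mu(B) \sum_{\substack{C \in \mathcal{M}_{n-2d(B)}\\ (C,\ell_2)=1}} \chi_C(\ell_1).
\]
Since $q \equiv 1 \pmod 4$, quadratic reciprocity gives $\chi_C(\ell_1) = \chi_{\ell_1}(C)$ for monic $C$, so the inner sum is the $(n-2d(B))$-th coefficient of the polynomial
\[
G(u) := \mathcal{L}(u,\chi_{\ell_1}) \prod_{P \mid \ell_2}\bigl(1 - \chi_{\ell_1}(P)\, u^{d(P)}\bigr).
\]
Since $\ell_1 \neq 1$ is square-free, $\mathcal{L}(u,\chi_{\ell_1})$ is a polynomial of degree $d(\ell_1)-1$, and by Weil's theorem its zeros lie on $|u|=q^{-1/2}$. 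Extracting the coefficient via Cauchy's formula on this circle, and using $|\mathcal{L}(u,\chi_{\ell_1})| \leq 2^{d(\ell_1)-1}$ (Weil) together with $\prod_{P\mid\ell_2}|1 - \chi_{\ell_1}(P)u^{d(P)}| \leq 2^{\omega(\ell_2)} \ll_\epsilon |\ell_2|^\epsilon$ (divisor-function bound on $\omega$), each inner term is $\ll q^{(n-2d(B))/2} \cdot 2^{d(\ell_1)} \cdot |\ell_2|^\epsilon$. Summing over $B$ introduces only an $O(n)$ factor, and the claimed $q^{(1/2+\epsilon)n}|\ell_1|^\epsilon$ follows by writing $2^{d(\ell_1)} = q^{d(\ell_1)\log_q 2}$ and absorbing this into $q^{\epsilon n}|\ell_1|^\epsilon$ via $d(\ell_1) \leq n$.

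The main obstacle is the pointwise Weil bound $|\mathcal{L}(u,\chi_{\ell_1})| \leq 2^{d(\ell_1)-1}$ on the critical circle: it is sharp in the worst case and amounts to $|\ell_1|^{\log_q 2}$ rather than $|\ell_1|^\epsilon$, and it cannot be improved by contour deformation alone. Absorbing it into the target $q^{\epsilon n}|\ell_1|^\epsilon$ therefore relies crucially on both $d(\ell_1) \leq n$ and the freedom to let the implicit constant depend on $\epsilon$.
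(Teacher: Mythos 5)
Your overall scheme --- M\"obius inversion to detect square-freeness, quadratic reciprocity to flip the character, rewriting the inner sum as a coefficient of $G(u)=\mathcal{L}(u,\chi_{\ell_1})\prod_{P\mid\ell_2}(1-\chi_{\ell_1}(P)u^{d(P)})$, and Cauchy extraction on $|u|=q^{-1/2}$ --- is the right skeleton and matches what the paper does for the orthogonal analogue in Lemma~\ref{sum over non-squares}. However, the absorption step fails, and the gap is exactly at the point you flag as ``the main obstacle.'' The pointwise Weil bound gives $|\mathcal{L}(u,\chi_{\ell_1})|\le 2^{d(\ell_1)-1}=q^{(d(\ell_1)-1)\log_q 2}$ on $|u|=q^{-1/2}$, and the inequality $2^{d(\ell_1)}\ll_\epsilon q^{\epsilon n}|\ell_1|^\epsilon$ you need does \emph{not} hold for small $\epsilon$: taking $\ell=\ell_1$ square-free of degree $n$, it reads $q^{n\log_q 2}\ll_\epsilon q^{2\epsilon n}$, which forces $\epsilon\ge\tfrac12\log_q 2$, a fixed positive constant (about $0.215$ when $q=5$). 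The lemma asserts the bound for every $\epsilon>0$, and the paper in fact invokes it with $\epsilon$ as small as roughly $1/12$ in the proof of Lemma~\ref{moments of lambda polyl}, so the gap is material for all but rather large $q$.

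What is needed in place of the raw Weil bound is the Lindel\"of-type estimate $\mathcal{L}(u,\chi_{\ell_1})\ll_\epsilon|\ell_1|^\epsilon$ on $|u|=q^{-1/2}$. Over function fields this is an unconditional theorem (Altu\u g--Tsimerman, \cite[Theorem~3.4]{AT}), but it is a genuine input beyond Weil's Riemann Hypothesis: RH only pins the zeros to the circle $|u|=q^{-1/2}$, and the resulting $2^{\deg-1}$ is sharp when the zeros cluster at one point. The paper's own proof of the companion Lemma~\ref{sum over non-squares} cites exactly this Lindel\"of bound, and the Bui--Florea lemma to which the present statement is deferred does the same. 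With that substitution, the rest of your argument --- the $2^{\omega(\ell_2)}\ll_\epsilon|\ell_2|^\epsilon\ll q^{o(n)}$ handling of the finite Euler factor and the $O(n)$ loss from summing over $B$ --- goes through.
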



\begin{proof}
	The proof follows from that of Lemma 3.5 of \cite{BF2018 Hybrid}. 
\end{proof}

For the orthogonal family, we focus on character sums over arithmetic progressions. In particular, for any $(C, N_E)=1$, our goal to evaluate  
$
\sum_{D\in  \mathcal{H}^{\Delta}_{n}(C)}\chi_D(f)
$
depending on whether $f$ is square-free or not.


\begin{lem}\label{sum over squares}
Let $\ell\in \mathbb{F}_q[t]$. For polynomials $C$ with $(C, N_E)=1$ and any $\epsilon>0$, 
	\[
	\sum_{D\in  \CMcal{H}^{\Delta}_{n}(C)}\chi_D(\ell^2)
	=\frac{| \CMcal{H}_{n}|}{\big|\left(\mathbb{F}_q[t]/N_E\right)^*\big|}\prod_{P\mid \ell N_E}\left(1+\frac{1}{|P|}\right)^{-1}
	+O\big(q^{(\frac14+\epsilon)n}\big).
	\]
\end{lem}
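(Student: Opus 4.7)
\medskip

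\noindent\textbf{Proof plan.}
The starting point is the elementary observation that for a square-free $D$, $\chi_D(\ell^2) = \chi_D(\ell)^2$ equals $1$ when $(D,\ell)=1$ and $0$ otherwise. Since $(C,N_E)=1$ and every prime dividing the discriminant $\Delta$ of $E$ also divides the conductor $N_E$, the congruence $D\equiv C\pmod{N_E}$ already forces $(D,N_E\Delta)=1$. The sum in question therefore counts square-free monics $D\in\mathcal{M}_n$ with $(D,\ell N_E)=1$ and $D\equiv C\pmod{N_E}$. Applying orthogonality of Dirichlet characters modulo $N_E$ reduces the problem to evaluating, for each $\chi\bmod N_E$, the character sum
\[
S_n(\chi,\ell) := \sum_{\substack{D\in\mathcal{M}_n,\,D\text{ sqfr}\\(D,\ell)=1}}\chi(D),
\]
then summing $\bar\chi(C)S_n(\chi,\ell)/|(\mathbb{F}_q[t]/N_E)^*|$ over $\chi$.

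Using $\mu^2(D)=\sum_{a^2\mid D}\mu(a)$, writing $D=a^2b$, and factoring over Euler products, together with the local simplification $(1-\chi(P)u^{d(P)})/(1-\chi^2(P)u^{2d(P)})=(1+\chi(P)u^{d(P)})^{-1}$, the generating function takes the compact closed form
\[
\sum_{n\ge 0} S_n(\chi,\ell)\, u^n
 = \frac{L(u,\chi)}{L(u^2,\chi^2)} \prod_{\substack{P\mid\ell\\P\nmid N_E}} \bigl(1+\chi(P)u^{d(P)}\bigr)^{-1},
\]
where $L(u,\chi)=\prod_{P\nmid N_E}(1-\chi(P)u^{d(P)})^{-1}$ is the Dirichlet $L$-function modulo $N_E$. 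For $\chi=\chi_0$ this simplifies to $\frac{1-qu^2}{1-qu}\prod_{P\mid\ell N_E}(1+u^{d(P)})^{-1}$, whose simple pole at $u=1/q$ yields the main term $|\mathcal{H}_n|\prod_{P\mid\ell N_E}(1+|P|^{-1})^{-1}$; the remaining poles of this rational function lie on $|u|=1$ and contribute only $O_{E,\ell}(q^{\epsilon n})$ (polynomial in $n$) to $[u^n]$.

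For each non-principal $\chi\bmod N_E$, Weil's Riemann hypothesis \cite{Weil} over $\mathbb{F}_q[t]$ makes $L(u,\chi)$ a polynomial whose zeros all lie on $|u|=q^{-1/2}$. The heart of the argument is a dichotomy on $\chi^2$. If $\chi^2$ is non-principal, $L(u^2,\chi^2)$ is a polynomial whose zeros lie on $|u|=q^{-1/4}$, so the rational function above has its dominant singularities on that circle, giving $S_n(\chi,\ell)=O_{E,\ell,\epsilon}\!\left(q^{(1/4+\epsilon)n}\right)$ by Cauchy's formula (or partial fractions). If $\chi$ is a non-principal quadratic character, then $\chi^2=\chi_0$ and the pole of $L(u^2,\chi_0)$ at $u^2=1/q$ pushes all surviving singularities out to $|u|=1$, producing the stronger bound $O(q^{\epsilon n})$. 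Summing the $O(q^{(1/4+\epsilon)n})$ contribution over the finitely many non-principal $\chi$, dividing by $|(\mathbb{F}_q[t]/N_E)^*|$, and combining with the main term from $\chi_0$ yields the claimed asymptotic. The main hurdle is precisely the case where $\chi^2$ is non-principal: invoking Weil's bound for $\chi^2$ rather than only for $\chi$ is what pins the dominant poles of the generating function to $|u|=q^{-1/4}$ instead of $|u|=q^{-1/2}$, forcing the error exponent down from $1/2$ to $1/4$.
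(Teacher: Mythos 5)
Your proof is correct, but it takes a genuinely different route from the paper's. The paper handles a prime power $\ell=P^m$ by an inclusion--exclusion identity: it removes polynomials divisible by $P$, rewrites that count over $\mathcal{H}^\Delta_{n-d(P)}(CP^{-1})$, iterates, and at each step substitutes the cardinality estimate \eqref{cardinality in AP}, which it cites as a black box from \cite{CL}; the $O(q^{(1/4+\epsilon)n})$ error comes entirely from that cited estimate. You instead expand over Dirichlet characters $\chi$ modulo $N_E$, write the twisted square-free count as a rational generating function $L(u,\chi)/L(u^2,\chi^2)$ times a finite Euler product, and read off the asymptotics from the singularity structure, with Weil's RH placing the poles of $1/L(u^2,\chi^2)$ at $|u|=q^{-1/4}$ when $\chi^2$ is non-principal. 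Your route is more self-contained (it re-derives \eqref{cardinality in AP} as the $\ell=1$ case rather than citing it), and it exhibits exactly which characters are responsible for the $q^{n/4}$ exponent, namely those $\chi\bmod N_E$ with $\chi^2$ non-principal; the paper's route is shorter given the cited lemma. One small imprecision in your writeup that does not affect the conclusion: for an imprimitive non-principal $\chi$, $L(u,\chi)$ also has roots on $|u|=1$ coming from the ramified Euler factors (and possibly a trivial zero), so not all of its zeros lie on $|u|=q^{-1/2}$; but since these are zeros of the numerator, not poles of the generating function, they play no role in your coefficient bound, which depends only on the poles of $1/L(u^2,\chi^2)$ and of the finite local factors at $P\mid\ell$.
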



\begin{proof}
We will prove the lemma for prime powers $\ell = P^m$ for some $m\geq 0$. It can then be seen to also hold for other $\ell$. Write $n=k d(P)+r$, where $0\leq r< d(P)$. If $(P, N_E)=1$, then we have
	\[
	\sum_{D\in  \CMcal{H}^{\Delta}_{n}(C)}\chi_D(\ell^2)=\sum_{\substack{D\in  \CMcal{H}^{\Delta}_{n}(C),\\ (D, P)=1}}1=| \CMcal{H}^{\Delta}_{n}(C)|-\sum_{\substack{D\in  \CMcal{H}^{\Delta}_{n-d(P)}(CP^{-1}),\\ (D, P)=1}}1,
	\]
where $P^{-1}$ is the inverse of $P$ modulo $N_E$. Repeating this process and using \eqref{cardinality in AP} many times, 
	\begin{align*}
	\sum_{D\in  \CMcal{H}^{\Delta}_{n}(C)}\chi_D(\ell^2)
	&=| \CMcal{H}^{\Delta}_{n}(C)|-| \CMcal{H}^{\Delta}_{n-d(P)}(CP^{-1})|+\cdots+(-1)^k | \CMcal{H}^{\Delta}_{n-kd(p)}(CP^{-k})|\\
	&=| \CMcal{H}^{\Delta}_{n}(C)|-\frac{| \CMcal{H}^{\Delta}_{n}(C)|}{|P|}+\cdots +(-1)^k \frac{| \CMcal{H}^{\Delta}_{n}(C)|}{|P|^k}+O\left(q^{(\frac14+\epsilon)n}\sum_{k\geq 1}q^{-kd(P)}\right)\\
	&= | \CMcal{H}^{\Delta}_{n}(C)|\sum_{j=0}^{k}(-1)^j\frac{1}{|P|^j}+O\left(q^{(\frac14+\epsilon)n}\right)=| \CMcal{H}^{\Delta}_{n}(C)|(1+|P|^{-1})^{-1}+O\left(q^{(\frac14+\epsilon)n}\right) .
	\end{align*}
Here, the last line follows from the choice of $k$. In the case where $P\mid N_E$, notice that 
	\[
	\sum_{D\in  \CMcal{H}^{\Delta}_{n}(C)}\chi_D(\ell^2)=| \CMcal{H}^{\Delta}_{n}(C)|,
	\]
	which completes the proof.
\end{proof}


\begin{lem}\label{sum over non-squares}
Assume that $\ell_1$ is square-free and $\ell_2 \in \mathbb{F}_q[t]$. Then for polynomials $C$ with $(C, N_E)=1$ and any $\epsilon>0$,
	\[
	\sum_{D\in  \CMcal{H}^{\Delta}_{n}(C)}\chi_D(\ell_1 \ell_2^2)
	\ll_{E, q, \epsilon}|\ell_1|^{\epsilon} q^{(1/2+\epsilon)n}.
	\]
\end{lem}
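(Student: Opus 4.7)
The plan is to reduce the sum over the arithmetic progression $\mathcal{H}_n^\Delta(C)$ to a finite family of unrestricted character sums over $\mathcal{H}_n$ via orthogonality modulo $N_E$, and then bound each of those by an adaptation of the proof of Lemma \ref{polya-vinogradov inequality}, i.e.\ by Möbius inversion, quadratic reciprocity, and the Weil bound.

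\textbf{Step 1: remove the progression by character orthogonality.} Since the primes dividing $\Delta$ coincide with those dividing $N_E$, the condition $(D,\Delta)=1$ is automatic once $D\equiv C\pmod{N_E}$ with $(C,N_E)=1$. Orthogonality of Dirichlet characters modulo $N_E$ then gives
\[
\sum_{D\in\mathcal{H}_n^\Delta(C)}\chi_D(\ell_1\ell_2^2)
=\frac{1}{|(\mathbb{F}_q[t]/N_E)^*|}\sum_{\psi\bmod N_E}\bar\psi(C)\sum_{D\in\mathcal{H}_n}\psi(D)\chi_D(\ell_1\ell_2^2),
\]
where we interpret $\psi(D)=0$ if $(D,N_E)>1$. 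There are $O_E(1)$ characters $\psi$, so it suffices to bound each twisted inner sum by $|\ell_1|^\epsilon q^{(1/2+\epsilon)n}$.

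\textbf{Step 2: reduce the twisted sum to a genuine Dirichlet-character sum.} Fix $\psi$. Following the pattern of the proof of Lemma \ref{polya-vinogradov inequality}, remove the squarefreeness of $D$ by Möbius, writing $D=a^2D'$ with $a$ monic and $D'\in\mathcal{M}_{n-2d(a)}$. Detect the coprimality $(D,\ell_2)=1$ (implicit in $\chi_D(\ell_2^2)\neq 0$) by a second Möbius inversion, writing $D'=bD''$ with $b\mid\operatorname{rad}(\ell_2)$. Standard properties of the Kronecker symbol give $\chi_{a^2D'}(\ell_1)=\chi_{D'}(\ell_1)\mathbf{1}_{(a,\ell_1)=1}$. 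Because $q\equiv 1\pmod 4$, quadratic reciprocity in $\mathbb{F}_q[t]$ yields $\chi_{D'}(\ell_1)=\chi_{\ell_1}(D')=\chi_{\ell_1}(b)\chi_{\ell_1}(D'')$ on coprime arguments, thereby converting the Kronecker symbol into a genuine Dirichlet character modulo $\ell_1$.

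\textbf{Step 3: apply the Weil bound and assemble.} After the above manipulations, the inner sum is a finite combination of character sums
\[
\sum_{D''\in\mathcal{M}_m}(\psi\chi_{\ell_1})(D''),\qquad m=n-2d(a)-d(b),
\]
indexed by $a$, $b$, $\psi$. The combined character $\psi\chi_{\ell_1}$ is a Dirichlet character modulo $\operatorname{lcm}(N_E,\ell_1)$ of conductor degree at most $d(N_E)+d(\ell_1)$. Since $\ell_1$ is a nonconstant squarefree polynomial, $\chi_{\ell_1}$ is nontrivial, and in the generic regime (where $\chi_{\ell_1}$ does not descend to a character modulo $N_E$) the combined character is nonprincipal. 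The Riemann hypothesis for $L$-functions of Dirichlet characters over $\mathbb{F}_q[t]$ (Weil) then gives
\[
\Bigl|\sum_{D''\in\mathcal{M}_m}(\psi\chi_{\ell_1})(D'')\Bigr|\ll\bigl(d(N_E)+d(\ell_1)\bigr)q^{m/2}.
\]
Summing this bound over $a$ with $2d(a)\le n$, over $b\mid\operatorname{rad}(\ell_2)$, and over the $O_E(1)$ characters $\psi$, and absorbing divisor-type factors and the $d(N_E)$ into the implicit constant and into $|\ell_1|^\epsilon q^{\epsilon n}$, delivers the claimed bound $\ll_{E,q,\epsilon}|\ell_1|^\epsilon q^{(1/2+\epsilon)n}$.

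\textbf{Main obstacle.} The technical heart is the careful bookkeeping of the nested Möbius/character sums: the auxiliary variables $a$ and $b$, together with the conductor-degree factor and the number of characters $\psi$, must all be absorbed either into the $O_E$ constant (for everything depending only on $N_E$) or into the factor $|\ell_1|^\epsilon q^{\epsilon n}$. A secondary subtlety is the degenerate case in which $\psi\chi_{\ell_1}$ fails to be nonprincipal; this can occur only when $\chi_{\ell_1}$ factors through the modulus $N_E$, forcing $\ell_1\mid N_E$ and hence $|\ell_1|=O_E(1)$, in which case the exceptional contribution is controlled by the $O_E$ implicit constant combined with cancellation coming from the outer sum over $\psi$.
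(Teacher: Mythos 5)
Your strategy — strip off the arithmetic progression by orthogonality mod $N_E$, remove the squarefree and coprimality constraints on $D$ by M\"obius inversion, and convert $\chi_D(\ell_1)$ to a genuine Dirichlet character in $D$ via reciprocity — is a legitimate, more combinatorial alternative to the paper's generating-function route: the paper encodes the M\"obius sieving implicitly inside the Euler product $A_\psi(u)$ and extracts the $n$-th coefficient by a contour integral, while you do the sieving explicitly and then want to bound coefficients of $L(u,\psi\chi_{\ell_1})$ directly. Structurally these are the same reduction.

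The gap is in Step 3. The asserted estimate
\[
\Bigl|\sum_{D''\in\mathcal{M}_m}(\psi\chi_{\ell_1})(D'')\Bigr|\ll\bigl(d(N_E)+d(\ell_1)\bigr)\,q^{m/2}
\]
does \emph{not} follow from Weil's Riemann Hypothesis alone. The left side is the $m$-th coefficient of the $L$-polynomial $L(u,\psi\chi_{\ell_1})=\prod_{i\le r}(1-\alpha_i u)$; RH gives $|\alpha_i|\le q^{1/2}$, and the elementary symmetric function bound is then $\binom{r}{m}q^{m/2}$, where $r$ is of order $d(N_E)+d(\ell_1)$. The binomial factor can be as large as $2^{r}\approx |N_E\ell_1|^{\log 2/\log q}$, and since $\log 2/\log q$ is a fixed positive constant depending on $q$, this is \emph{not} $\ll|\ell_1|^{\epsilon}$ for $\epsilon<\log 2/\log q$, so the lemma's conclusion (which requires every $\epsilon>0$) is out of reach this way. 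What is actually needed, and what the paper invokes, is the Lindel\"of bound for function-field $L$-functions (Altu\u{g}--Tsimerman, \cite[Theorem 3.4]{AT}), namely $|L(u,\psi\chi_{\ell_1})|\ll_\epsilon|N_E\ell_1|^{\epsilon}$ on $|u|=q^{-1/2}$; combined with Cauchy's integral formula on a circle of radius $q^{-1/2-\epsilon}$ this gives the genuine $|\ell_1|^{\epsilon}q^{m/2}$ coefficient bound, which repairs your Step 3.

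One further point: the degenerate case you raise, where $\psi\chi_{\ell_1}$ is principal, does not occur, because the lemma is only applied with $(\ell_1,N_E)=1$ (the paper's proof states this explicitly), which forces $\chi_{\ell_1}$ to have conductor coprime to $N_E$ and hence $\psi\chi_{\ell_1}$ to be nonprincipal for every $\psi$. If $\ell_1\mid N_E$ were allowed, then for $D\in\mathcal{H}^{\Delta}_{n}(C)$ the value $\chi_D(\ell_1)=\left(\tfrac{C}{\ell_1}\right)$ would be a fixed nonzero sign and there would be no cancellation at all; the suggested rescue via cancellation over $\psi$ would not save the bound, so it is important to note that this case is excluded by hypothesis rather than handled.
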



\begin{proof}
Note that if the power series $\sum_{f\in \CMcal{M}_n} a(f)u^{d(f)}$ converges absolutely for $|u| \leq  R < 1$, then
	\[
	\sum_{f\in \CMcal{M}_n}a(f)
	=\frac{1}{2\pi i}\oint_{|u|=R} \left(\sum_{f\in \CMcal{M}_n} a(f)u^{d(f)}\right)\frac{du}{u^{n+1}}.
	\]
Using the orthogonality of characters, we see that
	\begin{align}\label{use of character orthogonality}
	\sum_{D\in  \CMcal{H}^{\Delta}_{n}(C)}\chi_D(\ell_1\ell_2^2)
	=\frac{1}{\big|(\mathbb{F}_q[t]/N_E)^*\big|}\sum_{\psi \text{ (mod }N_E)}\overline{\psi(C)}\sum_{D\in  \CMcal{H}_{n}}\chi_D(\ell_1\ell_2^2)\psi(D).
	\end{align}
Since $q\equiv 1 \pmod{4}$, by the quadratic reciprocity
	\begin{equation}\label{eq:quad rec}
	\sum_{D\in \CMcal{H}_{n}} \chi_D(\ell_1\ell_2^2) \psi(D)
	=\frac{1}{2\pi i}\oint_{|u|=r}A_{\psi}(u)\frac{du}{u^{n+1}},
	\end{equation}
where $r<1$ and the generating series can be written as an Euler product as follows. For $F= \ell_1\ell_2^2$, 
	\begin{align*}
	A_{\psi}(u)
	&=\prod_P\left(1+\psi(P)\chi_F(P)u^{d(P)}\right)
	=\frac{\prod_{P\nmid F}\left(1-\psi^2 (P)u^{2d(P)}\right)}{\prod_{P}\left(1-\psi(P)\chi_F(P)u^{d(P)}\right)}\\
	&=\frac{\prod_{P}\big(1-\widetilde{\psi} (P)\chi_{\ell_1}(P)u^{d(P)}\big)^{-1} 
	\prod_{P\mid Q_{\psi}}\big(1-\widetilde{\psi}(P)\chi_{\ell_1}(P)u^{d(P)}\big)}{\prod_{P}\big(1-\widetilde{\psi}^2(P)u^{2d(P)}\big)^{-1} \prod_{P\mid F R_{\psi}}\big(1-\widetilde{\psi}^2(P)u^{2d(P)}\big)}.
	\end{align*}
Here, we used the unique decomposition $\psi=\widetilde{\psi}\psi_0$, $\psi^2=\widetilde{\psi}^2 \psi^2_0$, where $\widetilde{\psi}, \widetilde{\psi}^2$ are primitive characters and $\psi_0, \psi_0^2$ are principal characters with minimal moduli $Q_{\psi}$ and $R_{\psi}$, respectively. 
	
\noindent If $\widetilde{\psi}^2$ is trivial, then 
	\[
	\frac{1}{\prod_{P}\big(1-\widetilde{\psi}^2(P)u^{2d(P)}\big)^{-1}}=1-qu^2.
	\]
If $\widetilde{\psi}^2$ is nontrivial, then 
	\[
	\frac{1}{\prod_{P}\big(1-\widetilde{\psi}^2(P)u^{2d(P)}\big)^{-1}}
	=(1-u^2)^{-\eta}\prod_{j=1}^{M}\left(1-\sqrt{q}e^{i\theta_j}u^2\right)^{-1},
	\]
where $M\leq d(N_E)-1$, and therefore it has poles on the disc $|u|=q^{-\frac14}$. 
	
Notice that $(\ell_1, N_E)=1$. So $\widetilde{\psi} \chi_{\ell_1}$ is also a primitive and non-principal character modulo $N_E \ell_1$, since $\chi_{\ell_1}$ is always a non-principal character. We use the Lindel\"{o}f bound (see \cite[Theorem 3.4]{AT}) on $|u|=q^{-1/2}$ to get 
	\[
	\prod_{P}\left(1-\widetilde{\psi} (P)\chi_{\ell_1}(P)u^{d(P)}\right)^{-1}\ll |N_E \ell_1|^{\epsilon}.
	\]
These computations allow us to bound
	\[
	A_{\psi}(u)
	\ll_{\ell_1,\ell_2, E, q, \epsilon} |\ell_1|^{\epsilon}.
	\]
Using the above estimates, we shift the contour of integration to $|u|=q^{-\frac12-\epsilon}$ to obtain
	\[
	\left|\frac{1}{2\pi i}\oint_{|u|
	=q^{-1/2-\epsilon}}A_{\psi}(u)\frac{du}{u^{n+1}}\right|
	\ll_{\ell_1,\ell_2, E, q, \epsilon} |\ell_1|^{\epsilon}q^{(\frac12+\epsilon)n},
	\]
since on $|u|=q^{-\frac12-\epsilon}$, we have $u^{-n-1}\ll q^{(\frac12+\epsilon)n}$.
We substitute this bound in \eqref{eq:quad rec}. The result then follows by \eqref{use of character orthogonality}.
\end{proof}


\subsection{Sums Involving Trigonometric Functions}

The following mean values will appear in our calculation of moments of real parts of linear combinations of Dirichlet polynomials. 


\begin{lem}\label{logarithmic cosine sum}
	We have
	\begin{align*}
	\sum_{n\leq X}\frac{\cos (2nt)}{n}
	=\log \Big(\min\big\{X, \tfrac{1}{2|t|}\big\}\Big)+O(1),
	\quad \text{ and } \quad \sum_{n\leq X}\frac{\sin (2nt)}{n}
	\ll 1. 
	\end{align*}
\end{lem}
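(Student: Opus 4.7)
The plan is to prove both estimates by the same case split, $X \le 1/(2|t|)$ versus $X > 1/(2|t|)$, using Taylor expansion in the first case and a geometric series / Abel summation argument in the second. Throughout, I will work modulo $\pi$ and reduce to the range $0 < |t| \le \pi/2$ (if $|t|$ is bounded away from $0$, then $\log(\min\{X, 1/(2|t|)\}) = O(1)$ regardless, and both sums are trivially $O(\log X)$, but actually one still needs the argument below to see the main term is correct; the reduction $|t| \le \pi/2$ just avoids the denominator $\sin t$ becoming small for bad reasons).

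In the \emph{first case}, $X \le 1/(2|t|)$, I use $\cos(2nt) = 1 + O((nt)^2)$ and $\sin(2nt) = 2nt + O((nt)^3)$. Summing,
\[
\sum_{n\le X}\frac{\cos(2nt)}{n} = \sum_{n\le X}\frac{1}{n} + O\Bigl(t^2\sum_{n\le X} n\Bigr) = \log X + O(1) + O(X^2 t^2),
\]
and since $X^2 t^2 \le 1/4$, this is $\log X + O(1) = \log(\min\{X, 1/(2|t|)\}) + O(1)$. Likewise
\[
\sum_{n\le X}\frac{\sin(2nt)}{n} = 2tX + O(X^3 t^3) = O(1).
\]

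In the \emph{second case}, $X > 1/(2|t|)$, I set $N_0 = \lfloor 1/(2|t|)\rfloor$ and split the sum at $N_0$. The initial piece $\sum_{n\le N_0}\cos(2nt)/n$ is handled by the first case and equals $\log N_0 + O(1) = \log(1/(2|t|)) + O(1)$. For the tail $\sum_{N_0 < n \le X}\cos(2nt)/n$, I apply Abel summation with the partial sums $S(N) = \sum_{n\le N} \cos(2nt) = \Re \sum_{n\le N} e^{2int}$. Summing the geometric series gives the key bound $|S(N)| \le 1/|\sin t| \ll 1/|t|$ for $|t|\le \pi/2$, so Abel summation yields
\[
\Bigl|\sum_{N_0 < n \le X}\frac{\cos(2nt)}{n}\Bigr|
\ll \frac{1}{|t|\,N_0} + \frac{1}{|t|}\sum_{N_0 < n < X}\frac{1}{n(n+1)} \ll \frac{1}{|t|\,N_0} = O(1).
\]
Adding the two pieces gives $\log(\min\{X,1/(2|t|)\}) + O(1)$. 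The argument for $\sin(2nt)$ is identical: the initial piece is $O(1)$ from the first case, and the tail is $O(1)$ from the same Abel summation with the bound $|\sum_{n\le N}\sin(2nt)| \ll 1/|t|$.

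There is no real obstacle here; the only thing to be a bit careful about is the implicit reduction to $|t| \le \pi/2$, and recognizing that the threshold $N_0 \asymp 1/|t|$ is precisely the point at which the geometric‐series bound on the partial sums $S(N)$ starts to beat the trivial bound $|S(N)| \le N$. That is the mechanism that turns the truncated harmonic-like sum into $\log \min\{X, 1/(2|t|)\}$ rather than $\log X$.
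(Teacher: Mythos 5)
Your argument is correct and follows the same skeleton as the paper's proof: the same case split at $X\lessgtr 1/(2|t|)$, and the same Taylor expansions $\cos(2nt)=1+O((nt)^2)$, $\sin(2nt)=2nt+O((nt)^3)$ for the initial piece (which is verbatim how the paper handles the range $n\le 1/(2|t|)$). The only difference is in the tail over $1/(2|t|)<n\le X$: you do an explicit Abel summation against the Dirichlet-kernel bound $\bigl|\sum_{n\le N}e^{2int}\bigr|\le 1/|\sin t|\ll 1/|t|$, whereas the paper compares the tail sum with the integral $\int_{1/(2|t|)}^{X}\cos(2ut)/u\,du$ and invokes integration by parts. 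These are two forms of the same partial-summation idea; your discrete version is somewhat more explicit and fully rigorous (the paper's sum-to-integral comparison is left to the reader), so the mechanism and the resulting bounds are identical.

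One small caveat worth noting: your opening remark about "working modulo $\pi$ and reducing to $0<|t|\le\pi/2$" does not actually effect a reduction, since the right-hand side $\log\min\{X,1/(2|t|)\}$ is not $\pi$-periodic in $t$ (indeed the stated identity fails as written at $t=\pi$, where the sum is $\log X+O(1)$). Both your proof and the paper's tacitly assume $t$ is not near a nonzero multiple of $\pi$ — which is harmless in the paper's applications, where $t$ is small — but it is better to state the restriction $|t|\le\pi/2$ outright rather than present it as a consequence of periodicity.
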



\begin{proof}
	The proof is analogous to that of Lemma 9.1 in~\cite{Florea 4th moment}. First, if $|t| \leq \frac{1}{2X}$, then since $\cos u=1+O(u^2)$ we have 
	\[
	\sum_{n\leq X}\frac{\cos (2nt)}{n}
	= \sum_{n\leq X} \frac{1}{n}+ O\bigg(t^2 \sum_{n\leq X} n\bigg).
	\]
	This is $\log X + O(1)$. 
	
	Now suppose that $|t| > \frac{1}{2X}$. Then we separate the sum as 
	\begin{equation}\label{eq:separate n}
	\sum_{n\leq X}\frac{\cos (2nt)}{n}
	= \sum_{n\leq \frac{1}{2|t|}}\frac{\cos (2nt)}{n}
	+
	\sum_{ \frac{1}{2|t|} < n \leq X}\frac{\cos (2nt)}{n}.
	\end{equation}
	By the first case, 
	\[
	\sum_{n\leq \frac{1}{2|t|}}\frac{\cos (2nt)}{n}
	= \log \Big(\frac{1}{2|t|}\Big)+O(1). 
	\]
	We compare the last sum in \eqref{eq:separate n} with the integral 
	\[
	\int_{\frac{1}{2|t|}}^X  \frac{\cos (2ut)}{u} \mathop{du},
	\]
	which is $O(1)$ by integration by parts. This proves our first claim. 
	
	The second claim follows similarly, the difference being the use of the bound $\sin(2nt)\leq 2n|t|$ for $|t|\leq \frac{1}{2X}$ and $n\leq X$. 
\end{proof}


\subsection{Dirichlet Polynomial Approximation for $\log{L(s, \mathcal{F})}$}\label{logformula}

In the rest of the paper, we let $\cF$ stand for either $\chi_D$ or $E\otimes \chi_D$, depending on whether we deal with the symplectic or the orthogonal family. Similarly, $\kappa:=\kappa_{\mathcal{F}}$ will be either $2g$ or $m$ depending on the family.


\begin{lem}\label{lemma Altug Tsim}
For $\sigma\ge \sigma_0>\frac12$,
	\[
	\bigg|\frac{{\alpha_j}^{-1}q^{\sigma-\frac12}}{(1-{\alpha_j}^{-1}q^{\sigma-\frac12})^2}\bigg|
	\leq \frac{1}{(\sigma_0-\frac12)\log q} \operatorname{Re}\Big(\frac{1}{1-\alpha_j q^{\frac12-\sigma_0}}\Big).
	\]
\end{lem}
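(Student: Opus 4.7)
The plan is to use $|\alpha_j|=1$ to reduce everything to a two-variable real inequality. Writing $\alpha_j = e^{i\phi}$ and setting $r := q^{1/2-\sigma}$, $r_0 := q^{1/2-\sigma_0}$, and $L_0 := (\sigma_0-\tfrac12)\log q = \log(1/r_0)$, we have $0 < r \leq r_0 < 1$. The elementary identity $\tfrac{1/v}{(1-1/v)^2}=\tfrac{v}{(1-v)^2}$ applied with $v=\alpha_j q^{1/2-\sigma}$ rewrites the left-hand side as
\[
\bigg|\frac{\alpha_j^{-1}q^{\sigma-1/2}}{(1-\alpha_j^{-1}q^{\sigma-1/2})^2}\bigg| = \frac{r}{|1-re^{i\phi}|^2}=\frac{r}{1-2r\cos\phi+r^2},
\]
while a direct computation gives $\operatorname{Re}\tfrac{1}{1-\alpha_j q^{1/2-\sigma_0}}=\tfrac{1-r_0\cos\phi}{1-2r_0\cos\phi+r_0^2}$, with $1-r_0\cos\phi>0$ since $r_0<1$. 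Clearing (positive) denominators, the desired inequality is equivalent to $h(r)\leq 0$ on $[0,r_0]$, where
\[
h(r) := r L_0\bigl(1-2r_0\cos\phi+r_0^2\bigr) - (1-r_0\cos\phi)\bigl(1-2r\cos\phi+r^2\bigr).
\]

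The key structural observation is that $h$ is a concave quadratic in $r$, since its leading coefficient is $-(1-r_0\cos\phi)<0$. Hence the maximum on $[0,r_0]$ is attained either at the unique critical point $r^*=\cos\phi+\tfrac{L_0(1-2r_0\cos\phi+r_0^2)}{2(1-r_0\cos\phi)}$ or at an endpoint. Using the condition $h'(r^*)=0$ to eliminate the factor $L_0(1-2r_0\cos\phi+r_0^2)$ from $h(r^*)$ yields the clean simplification $h(r^*)=(1-r_0\cos\phi)(r^{*2}-1)$. Consequently, whenever $r^*\in[0,r_0]$, the bound $r^*\leq r_0<1$ forces $h(r^*)\leq 0$ automatically, with no further work needed.

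It remains to handle the two endpoint cases. At $r=0$, one has $h(0)=-(1-r_0\cos\phi)\leq 0$ trivially. At $r=r_0$, a direct calculation gives $h(r_0)=|1-r_0 e^{i\phi}|^2\bigl(r_0 L_0-(1-r_0\cos\phi)\bigr)$, which using $\cos\phi\leq 1$ is bounded above by $|1-r_0 e^{i\phi}|^2\bigl(r_0 L_0-(1-r_0)\bigr)$. The \emph{main obstacle}, or really the one elementary input that drives the whole lemma, is therefore the real-variable inequality $r_0\log(1/r_0)\leq 1-r_0$ valid for $r_0\in(0,1)$. This will be derived in a single line from $\log(1+x)\leq x$ evaluated at $x=1/r_0-1>0$. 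Combined with the concavity analysis, this yields $h\leq 0$ on all of $[0,r_0]$ and completes the proof.
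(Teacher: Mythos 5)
Your argument is correct and complete. The reduction to the real two-variable inequality $h(r)\leq 0$ via $|\alpha_j|=1$ and the identity $\tfrac{1/v}{(1-1/v)^2}=\tfrac{v}{(1-v)^2}$ is sound, the identification of $h$ as a concave quadratic is right (leading coefficient $-(1-r_0\cos\phi)<0$), and the simplification $h(r^*)=(1-r_0\cos\phi)(r^{*2}-1)$ checks out. The three-way case split (critical point inside $[0,r_0]$, or max at $r=0$, or max at $r=r_0$) covers all possibilities for a concave quadratic, and the endpoint case at $r=r_0$ reduces exactly to the elementary bound $r_0\log(1/r_0)\leq 1-r_0$. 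The paper itself gives no argument, deferring to Lemma~3.2 of \cite{AT}, so a direct comparison is not possible, but your proof is self-contained and correct.

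One remark on economy: the concavity-and-critical-point analysis can be bypassed entirely. After the same reduction, the left-hand side equals $f(r):=\frac{r}{1-2r\cos\phi+r^2}$, and a one-line computation gives
\[
f'(r)=\frac{1-r^2}{(1-2r\cos\phi+r^2)^2}>0 \quad\text{for }0\le r<1,
\]
so $f$ is increasing on $[0,r_0]$ and it suffices to prove the inequality at $r=r_0$. There, after clearing the common factor $|1-r_0e^{i\phi}|^{-2}$, the claim becomes $r_0 L_0\leq 1-r_0\cos\phi$, which follows from $\cos\phi\leq 1$ together with the same inequality $r_0\log(1/r_0)\leq 1-r_0$ you use. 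This removes the critical-point case and the $r=0$ endpoint from the discussion altogether, isolating exactly the two ingredients your write-up correctly identifies as essential: monotonicity in the radial variable and the elementary log bound. Either way the proof is valid.
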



\begin{proof}
	This is a generalization of Lemma 3.2 in \cite{AT} to prime powers $q$. 
\end{proof}

Note that by using \eqref{eq: L and L ast} and \eqref{eq:L as polyl}, we can write
\begin{align}\label{eq:log der symp zeros}
\frac{L'}{L}(s, \mathcal{F})
= \log q \bigg(\eta_\cF\frac{q^{-s}}{1-q^{-s}}-\kappa_\cF 
+ \sum_{j=1}^{\kappa_\cF} \frac{1}{1-\alpha_j q^{1/2-s}}\bigg),
\end{align}
where
\begin{equation}\label{eq:defn eta}
\eta_{\mathcal{F}}=
\begin{cases}
\eta \quad &\text{if } \, \, \cF=\chi_D ,\\
0 \quad &\text{if } \cF=E\otimes \chi_D.
\end{cases}
\end{equation}
This holds for all $s$ except the zeros of the corresponding $L$-function.

\begin{lem}\label{bnd-sum-zeros}
For $\sigma\ge \sigma_0 > \frac12$,
	\[
	\sum_{j=1}^{\kappa_\CMcal{F}}\bigg|\frac{(\alpha_jq^{\frac12-\sigma-it})^X(1-(\alpha_jq^{\frac12-\sigma-it})^X)^2}{(1-\alpha_j^{-1}q^{\sigma-\frac12+it})^3}\bigg|
	\le 
	\frac{4q^{X(\frac12-\sigma)}}{(\sigma_0-\frac12)^2\log^3 q}\bigg(\bigg|\operatorname{Re}\frac{L'}{L}\big(\sigma_0+it,\mathcal{F}\big)\bigg|+\frac{3\kappa_{\mathcal{F}} \log q}{2}\bigg). 
	\]
\end{lem}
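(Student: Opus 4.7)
The plan is to reduce the sum to Lemma \ref{lemma Altug Tsim} together with the explicit formula \eqref{eq:log der symp zeros}. Because $|\alpha_j|=1$ and $\sigma\ge\tfrac12$, the numerator admits the uniform trivial bound $\big|(\alpha_jq^{1/2-\sigma-it})^X(1-(\alpha_jq^{1/2-\sigma-it})^X)^2\big|\le 4\,q^{X(1/2-\sigma)}$, so the task becomes to bound
\[
\sum_{j=1}^{\kappa_\cF}\frac{1}{|1-w_j|^3},\qquad w_j:=\alpha_j^{-1}q^{\sigma-1/2+it},\qquad |w_j|=q^{\sigma-1/2}\ge 1.
\]

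I would split
\[
\frac{1}{|1-w_j|^3} \;=\; \frac{|w_j|}{|1-w_j|^2}\cdot \frac{q^{1/2-\sigma}}{|1-w_j|}
\]
and handle each factor separately. For the first factor, applying Lemma \ref{lemma Altug Tsim} with $\alpha_j$ replaced by $\alpha_j q^{-it}$ (still of unit modulus) gives
\[
\frac{|w_j|}{|1-w_j|^2}\;\le\;\frac{1}{(\sigma_0-\tfrac12)\log q}\,\operatorname{Re}\frac{1}{1-\alpha_j q^{1/2-\sigma_0-it}}.
\]
For the second factor, since $w_j$ lies strictly outside the unit circle, I would use $|1-w_j|\ge |w_j|-1\ge q^{\sigma_0-1/2}-1\ge (\sigma_0-\tfrac12)\log q$, where the last step is the elementary inequality $q^x-1\ge x\log q$ for $x\ge 0$. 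Multiplying these pointwise estimates and recalling $|w_j|^{-1}=q^{1/2-\sigma}$ yields
\[
\frac{1}{|1-w_j|^3}\;\le\;\frac{q^{1/2-\sigma}}{(\sigma_0-\tfrac12)^2\log^2 q}\,\operatorname{Re}\frac{1}{1-\alpha_j q^{1/2-\sigma_0-it}}.
\]

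Summing over $j$ and invoking \eqref{eq:log der symp zeros} at $s=\sigma_0+it$, one rearranges
\[
\log q\sum_{j=1}^{\kappa_\cF}\operatorname{Re}\frac{1}{1-\alpha_j q^{1/2-\sigma_0-it}}\;=\;\operatorname{Re}\frac{L'}{L}(\sigma_0+it,\cF)+\kappa_\cF\log q-\eta_\cF\log q\,\operatorname{Re}\frac{q^{-\sigma_0-it}}{1-q^{-\sigma_0-it}}.
\]
The trivial-zero contribution is bounded by $\log q\cdot q^{-\sigma_0}/(1-q^{-\sigma_0})$, which in our range of parameters is dominated by $\kappa_\cF\log q/2$ (as $\eta_\cF\in\{0,1\}$ and $\kappa_\cF$ is large). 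Therefore the sum on the left is at most $\big|\operatorname{Re}(L'/L)(\sigma_0+it,\cF)\big|+\tfrac{3}{2}\kappa_\cF\log q$. Combining with the per-$j$ bound above and with the prefactor $4q^{X(1/2-\sigma)}$ from the numerator, and using $q^{1/2-\sigma}\le 1$ to absorb one such factor, gives exactly the asserted estimate. The main obstacle is that Lemma \ref{lemma Altug Tsim} alone furnishes only quadratic control on $1/|1-w_j|^2$; the crucial observation that unlocks the cubic bound is that $|w_j|>1$ forces $w_j$ away from $1$ by at least $q^{\sigma_0-1/2}-1$, which is precisely what converts the extra factor of $1/|1-w_j|$ into the clean quantity $1/((\sigma_0-\tfrac12)\log q)$.
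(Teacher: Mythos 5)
Your proof is correct and follows essentially the same path as the paper: you split the cubic denominator into the quadratic factor controlled by Lemma \ref{lemma Altug Tsim} and one extra reciprocal of $|1-\alpha_j^{-1}q^{\sigma-1/2+it}|$, bound that extra factor from below by $q^{\sigma_0-1/2}-1\ge(\sigma_0-\tfrac12)\log q$, and then sum via the explicit formula \eqref{eq:log der symp zeros}, treating the trivial-zero term exactly as the paper does. The only cosmetic difference is where the factor $q^{1/2-\sigma}\le 1$ is absorbed — you carry it along and drop it at the end, whereas the paper folds it into the trivial numerator bound — but the decomposition, the invocation of Lemma \ref{lemma Altug Tsim}, and the $3\kappa_\cF\log q/2$ accounting are identical.
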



\begin{proof}
This proof is similar to that of Lemma 3.9 in \cite{DL}. We start with
	\begin{align*}
	\bigg|\frac{(\alpha_jq^{\frac12-\sigma-it})^X(1-(\alpha_jq^{\frac12-\sigma-it})^X)^2}{(1-\alpha_j^{-1}q^{\sigma-\frac12+it})^3}\bigg| 
	=
	\bigg|\frac{(\alpha_jq^{\frac12-\sigma-it})^{X+1}(1-(\alpha_jq^{\frac12-\sigma-it})^X)^2}
	{(1-\alpha_j^{-1}q^{\sigma-\frac12+it})}\bigg| \bigg|\frac{{\alpha_j}^{-1} q^{\sigma-\frac12+it}}{(1-\alpha_j^{-1}q^{\sigma-\frac12+it})^2}\bigg|.
	\end{align*}
Here, $\big|(\alpha_jq^{\frac12-\sigma-it})^{X+1}(1-(\alpha_jq^{\frac12-\sigma-it})^X)^2 \big| \leq 4 q^{X(\frac12-\sigma)}$.  Also, we write 
	\begin{align*}
	|1-\alpha_j^{-1}q^{\sigma-\frac12+it}|
	&= |1-q^{\sigma-\frac12}\cos(2\pi\theta_j+t\log q)
	-iq^{\sigma-\frac12}\sin(2\pi\theta_j+t\log q)| \\
	&= \sqrt{1+q^{2\sigma-1}-2q^{\sigma-\frac12}\cos(2\pi\theta_j+t\log q)}.
	\end{align*}
For $\sigma\geq \sigma_0$,
	\[
	\frac{1}{|1-\alpha_j^{-1}q^{\sigma-\frac12+it}|}
	\leq 
	\frac{1}{\sqrt{1+q^{2\sigma_0-1}-2q^{\sigma_0-\frac12}}}
	\leq \frac{1}{(\sigma_0-\frac12)\log q}. 
	\]
Now, let $\widetilde{\alpha}_j={\alpha}_j q^{-it}$. Then 
	\[
	\left|\frac{{\alpha_j}^{-1} q^{\sigma-\frac12+it}}{(1-\alpha_j^{-1}q^{\sigma-\frac12+it})^2}\right|
	=  \left|\frac{{\widetilde{\alpha}_j}^{-1} q^{\sigma-\frac12}}{(1-\widetilde{\alpha}_j^{-1} q^{\sigma-\frac12})^2}\right| .
	\]
By Lemma \ref{lemma Altug Tsim}, this is 
	\[
	\leq \frac{1}{(\sigma_0-\frac12)\log q} 
	\operatorname{Re} \Big(\frac{1}{1-\widetilde{\alpha}_j q^{\frac12-\sigma_0}} \Big).
	\]
Therefore,
	\[
	\sum_{j=1}^{\kappa_{\mathcal{F}}}\bigg|\frac{(\alpha_jq^{\frac12-\sigma-it})^X(1-(\alpha_jq^{\frac12-\sigma-it})^X)^2}{(1-\alpha_j^{-1}q^{\sigma-\frac12+it})^3}\bigg|
	\le 
	\frac{4 q^{X(\frac12-\sigma)}}{(\sigma_0-\frac12)^2\log^2 q} 
	\sum_{j=1}^{\kappa_{\mathcal{F}}} \operatorname{Re} \Big(\frac{1}{1-\widetilde{\alpha}_j q^{\frac12-\sigma_0}} \Big).
	\]
By \eqref{eq:log der symp zeros} with $s=\sigma_0+it$, 
	\[
	\sum_{j=1}^{\kappa_{\mathcal{F}}} \operatorname{Re} \Big(\frac{1}{1-\widetilde{\alpha}_j q^{\frac12-\sigma_0}} \Big)
	= \frac{1}{\log q} \operatorname{Re} \frac{L'}{L}\big(\sigma_0+it, \mathcal{F}\big)
	-\operatorname{Re} \frac{\lambda q^{-\sigma_0-it}}{1-q^{-\sigma_0-it}}+\kappa_\cF.
	\]
Here, 
	\begin{align*}
	\operatorname{Re} \frac{ q^{-\sigma_0-it}}{1-q^{-\sigma_0-it}}
		= \operatorname{Re}\bigg( \frac{ q^{-\sigma_0-it}({1-q^{-\sigma_0+it}})}
	{|1-q^{-\sigma_0-it}|^2}\bigg)
		=\frac{q^{-\sigma_0}(\cos(t\log q)-q^{-\sigma_0})}{|1-q^{-\sigma_0-it}|^2}.
	\end{align*}
This is, by the triangle inequality,
	\[
	\leq 
	\frac{q^{-\sigma_0}}{1-q^{-\sigma_0}}
	\ll q^{-\sigma_0} \leq \frac{\kappa_\cF}{2}.
	\]
Combining these estimates, we conclude that  
	\begin{align*}
\sum_{j=1}^{2g}
	\Big|\frac{(\alpha_jq^{\frac12-\sigma-it})^X(1-(\alpha_jq^{\frac12-\sigma-it})^X)^2}{(1-\alpha_j^{-1}q^{\sigma-\frac12+it})^3}\Big|
	\leq
	\frac{4q^{X(\frac12-\sigma)}}{(\sigma_0-1/2)^2\log^3 q}\Big(\Big|\operatorname{Re} \frac{L'}{L}\big(\sigma_0+it,\mathcal{F}\big)\Big|+\frac{3\kappa_\cF\log q}{2}\Big).
	\end{align*}
This completes the proof. 
\end{proof}

By taking the logarithmic derivative of the Euler product in \eqref{eq:Euler product symplectic}, we obtain
\begin{align}\label{eq:log der symp}
\frac{L'}{L}(s, \cF)
=-\log q \sum_{f\in \CMcal{M}}\frac{\Lambda_{\cF}(f)\chi_D(f)}{|f|^s} \quad \text{for} \,\, \operatorname{Re}(s) >1,
\end{align}
where the generalized von Mangoldt function is defined as
\[
\Lambda_{\cF}(f)=
\begin{cases}
\lambda_{\cF}(f)\log{d(f)} \quad &\text{if } \, \, f = P^d ,\\
0 \quad &\text{else},
\end{cases}
\,\, \text{ for } \,\,
\lambda_{\mathcal{F}}(P^d)=
\begin{cases}
1 \quad &\text{if }  \cF=\chi_D ,\\
 \lambda(P^d)-\lambda(P^{d-2})\quad &\text{if } \cF=E\otimes \chi_D.
\end{cases}
\]
Note that the sum above is over irreducible polynomials whereas the one in \eqref{eq:log der symp zeros} involves a sum over zeros. In order to approximate $ \frac{L'}{L}(s, \cF)$ near the critical line $\operatorname{Re}(s) =\frac12$ by a truncation of this Dirichlet series, we introduce a weighted von Mangoldt function.

\begin{equation}\label{Lambda X def} 
\Lambda_{X,\cF}(f)=
\begin{cases}
2X^2\Lambda_{\cF}(f) & \text{ if } d(f) \le X, \\ 
(X^2-(d(f))^2+2d(f)X-3X-d(f)-2)\Lambda_{\cF}(f) & \text{ if } X< d(f) \le 2X, \\ 
(3X-d(f)+1)(3X-d(f)+2)\Lambda_{\cF}(f) & \text{ if } 2X <d(f) \le 3X.
\end{cases}
\end{equation}
Observe that this weight is applied to polynomials with degree up to $X$.  We will first approximate the logarithm of the $L$-function by the 
Dirichlet polynomial
    \begin{equation}\label{eq:defn D X}
    \mathcal{D}_X(s,\cF)
    =\sum_{f\in \CMcal{M}_{\leq X}}\frac{\Lambda_{\cF}(f)\chi_D(f)}{d(f)|f|^{s}}.
    \end{equation}
We will study the distribution of this polynomial at $\operatorname{Re}(s)=\sigma_0=\tfrac12+\tfrac{c}{X}$ with $0<c<\frac1{2\log q}$.

We now prove that this polynomial can be used to approximate $\log L\big(\sigma_0+it,\cF\big)$. 


\begin{prop}\label{logL-as-PX}
Let $X\ge1$, and $\eta_{\mathcal F}$ be as defined in \eqref{eq:defn eta}. Then for any $t \in \mathbb R$ and $\sigma_0=\tfrac12+\tfrac{c}{X}$, 
	\begin{multline*}
	\log L\big(\sigma_0+it,\cF\big)
	= \mathcal{D}_X\big(\sigma_0+it,\cF\big)
	+O\Big(\frac{\kappa_\cF}{X}+\frac{\eta_{\cF}}{X^3}\Big) \\
	+O\bigg(\frac{1}{X^2}\bigg|\sum_{f\in \CMcal{M}_{\leq 3X}\setminus 
	\CMcal{M}_{\leq X} }\frac{\Lambda_{X, \cF}(f)\chi_D(f)}{d(f) |f|^{\sigma_0+it}}\bigg|\bigg)+	O\bigg(\frac{1}{X^3}\Big|\sum_{f\in\CMcal{M}_{\leq 3X}}\frac{\Lambda_{X, \cF}(f)\chi_D(f)}{|f|^{\sigma_0+it}}\Big|\bigg), 
	\end{multline*}
where $\Lambda_{X, \cF}(f)$ and $ \mathcal{D}_X\big(\sigma_0+it,\cF\big)$ are defined by \eqref{Lambda X def} and \eqref{eq:defn D X} respectively.
\end{prop}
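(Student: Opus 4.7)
The plan is to derive the approximation via a Selberg-type identity adapted to function fields, comparing the weighted Dirichlet polynomial to the power-series expansion of $\log L$ and bounding the resulting remainder through Lemma \ref{bnd-sum-zeros}. I start from the factorization $L(s,\mathcal{F}) = (1-q^{-s})^{\eta_{\mathcal{F}}} \prod_{j=1}^{\kappa_{\mathcal{F}}}(1-\alpha_j q^{1/2-s})$ and its logarithmic power series, valid for $\sigma_0>1/2$:
\[
\log L(\sigma_0+it,\mathcal{F})
= -\sum_{n\ge 1} \frac{1}{n}\, q^{-(\sigma_0+it)n}\!\left(\eta_{\mathcal{F}} + \sum_{j=1}^{\kappa_{\mathcal{F}}} (\alpha_j q^{1/2})^n\right),
\]
which by \eqref{eq:log der symp} equals $\sum_f \Lambda_{\mathcal{F}}(f)\chi_D(f)/(d(f)|f|^{\sigma_0+it})$. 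The target weighted sum
\[
T_X := \frac{1}{2X^2}\sum_{f \in \mathcal{M}_{\leq 3X}} \frac{\Lambda_{X,\mathcal{F}}(f)\chi_D(f)}{d(f)|f|^{\sigma_0+it}}
\]
then rewrites as $-\tfrac{1}{2X^2}\sum_n \tfrac{w(n)}{n} q^{-(\sigma_0+it)n}(\eta_{\mathcal{F}} + \sum_j (\alpha_j q^{1/2})^n)$, where $w(n)$ denotes the piecewise polynomial weight from \eqref{Lambda X def} with $w(n)/(2X^2)=1$ for $n\le X$ and $w(n)=0$ for $n>3X$.

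The difference $\log L(\sigma_0+it,\mathcal{F}) - T_X$ decomposes as $-\eta_{\mathcal{F}} R_0 - \sum_j R_j$, where
$R_0 = \sum_{n>X}(1-w(n)/(2X^2))\, q^{-(\sigma_0+it)n}/n$ and $R_j = \sum_{n>X}(1-w(n)/(2X^2))\, \beta_j^n/n$ with $\beta_j = \alpha_j q^{1/2-\sigma_0-it}$. The contribution of $R_0$ is handled trivially, since $|q^{-(\sigma_0+it)n}|\le q^{-n/2}$ gives $|\eta_{\mathcal{F}} R_0| \ll \eta_{\mathcal{F}}\, q^{-X/2}/X \ll \eta_{\mathcal{F}}/X^3$. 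For the sum over $j$, the precise piecewise cubic structure of $w$ is designed so that an explicit algebraic identity holds:
\[
\sum_j R_j
= c \sum_j \frac{\beta_j^{X}(1-\beta_j^X)^2}{(1-\beta_j^{-1})^3}
\; + \;\text{(boundary corrections)},
\]
where the boundary corrections contribute $O(\kappa_{\mathcal{F}}/X)$ after summing over $j$, reflecting the $-\kappa_{\mathcal{F}}$ constant in \eqref{eq:log der symp zeros}. Invoking Lemma \ref{bnd-sum-zeros} bounds the main zero sum by $O\!\left(q^{X(1/2-\sigma_0)}/(X^2\log^3 q) \cdot (|\operatorname{Re}(L'/L)(\sigma_0+it,\mathcal{F})|+\kappa_{\mathcal{F}}\log q)\right)$. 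Converting $L'/L$ back to a Dirichlet series via \eqref{eq:log der symp} with the weight $\Lambda_{X,\mathcal{F}}$ and using $q^{X(1/2-\sigma_0)} \asymp 1$ (as $X(\sigma_0-1/2)<1/(2\log q)$) produces the last error term in the proposition.

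Finally, one extracts the main term $\mathcal{D}_X$ from $T_X$ using $\Lambda_{X,\mathcal{F}}(f) = 2X^2 \Lambda_{\mathcal{F}}(f)$ for $d(f)\le X$, and bounds the tail $X<d(f)\le 3X$ by triangle inequality, yielding the stated decomposition.

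The main obstacle is the precise algebraic identity linking the smoothed remainder $\sum_j R_j$ to the rational function $\beta_j^{X}(1-\beta_j^X)^2/(1-\beta_j^{-1})^3$ appearing in Lemma \ref{bnd-sum-zeros}, together with its boundary corrections. This identity is essentially what motivates the piecewise cubic form of $\Lambda_{X,\mathcal{F}}$ in \eqref{Lambda X def}: the cubic smoothing on $[X,3X]$ matches the cubic denominator $(1-\beta^{-1})^3$ in the lemma, which is exactly what produces the clean $1/X^3$-factor in the error term. Verifying this identity requires careful bookkeeping of finite geometric sums and generating functions of piecewise polynomial sequences, but once established, the remaining estimates reduce to direct applications of Lemma \ref{bnd-sum-zeros} and elementary bounds.
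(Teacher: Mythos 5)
Your overall strategy shares the key ingredients with the paper's proof (the weighted Dirichlet polynomial $T_X$, the factorization of $L$ into its inverse zeros, Lemma~\ref{bnd-sum-zeros}, and the bootstrapping of $L'/L$ back to the weighted Dirichlet polynomial), but there is a genuine gap at the central step, which you yourself flag as the ``main obstacle.''

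The claimed algebraic identity
\[
\sum_j R_j \;=\; c\sum_j \frac{\beta_j^{X}(1-\beta_j^X)^2}{(1-\beta_j^{-1})^3} \;+\;\text{(boundary corrections)}
\]
does not hold. The rational function $\frac{\beta^{X}(1-\beta^X)^2}{(1-\beta^{-1})^3}$ is the generating function of the weight $n\mapsto 1-w(n)/(2X^2)$ \emph{without} the $1/n$ factors: matching coefficients in \eqref{log derivative version 3} against $-\frac{L'}{L}(s)=\log q\sum_f \Lambda_\cF(f)\chi_D(f)/|f|^s$ gives exactly
\[
\sum_{n>X}\Big(1-\frac{w(n)}{2X^2}\Big)\beta_j^n \;=\; -\frac{\log q}{2X^2}\,\frac{\beta_j^{X}(1-\beta_j^X)^2}{(1-\beta_j^{-1})^3},
\]
whereas your $R_j=\sum_{n>X}(1-w(n)/(2X^2))\beta_j^n/n$ carries the extra $1/n$. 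Differentiating $R_j$ in $\sigma$ reproduces the cubic up to constants, so the correct relation is $R_j(\sigma_0+it)=-\frac{\log^2 q}{2X^2}\int_{\sigma_0}^{\infty}\text{cubic}_j(\sigma+it)\,d\sigma$; this is an integral identity, not an algebraic one. The integration over $\sigma$ is not a ``boundary correction'' to be absorbed — it is the step that supplies the factor $\int_{\sigma_0}^\infty q^{X(1/2-\sigma)}d\sigma\asymp\frac{1}{X\log q}$, turning the $1/X^2$ Lemma~\ref{bnd-sum-zeros} bound into the $1/X^3$ in the stated error term and the $O(\kappa_\cF)$ contribution from $L'/L$ into the required $O(\kappa_\cF/X)$. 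Without it your bound on $\sum_j|R_j|$ is a full power of $X$ too large. The paper works at the level of $-L'/L$ throughout — using \eqref{log derivative version 3}, applying Lemma~\ref{bnd-sum-zeros} to get \eqref{eq:interm 1}, bootstrapping \eqref{LprimeL-in primes} by evaluating at $\sigma=\sigma_0$ and rearranging, and only then integrating from $\sigma_0$ to $\infty$ — precisely because the algebraic identity is available only at that level. To repair your argument you would either reproduce that integration explicitly, or first prove a version of Lemma~\ref{bnd-sum-zeros} for the integrated (logarithmic) remainder $R_j$; neither is a matter of bookkeeping.
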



\begin{proof}
We first establish the result for the real part that 
	\begin{equation}\label{eq:logLformula1}
	\begin{split}
	\log \big|L\big(\sigma_0+it,\cF\big)\big|
	=&\, \frac{1}{2X^2}
\operatorname{Re} \sum_{f\in\CMcal{M}_{\leq 3X}}
	\frac{\Lambda_{X,\cF}(f)\chi_D(f)}{d(f)|f|^{\sigma_0+it}} \\
	&+O\bigg(\frac{1}{X^3}\Big|\sum_{f\in\CMcal{M}_{\leq 3X}}\frac{\Lambda_{X,\cF}(f)\chi_D(f)}{|f|^{\sigma_0+it}}\Big|\bigg)
	+O\left(\frac{\kappa_\cF}{X}+ \frac{\eta_{\cF}}{X^3}\right).
	\end{split}
	\end{equation}
By \cite[(4.7)]{DL} for $s= \sigma+it$ with $\operatorname{Re}(s)\geq \frac12$ (the case $\mathcal{F} = E \otimes \chi_D$ follows the same path, but it requires further information about zeros),
	\begin{equation}\label{log derivative version 3}
	\begin{split}
	-\frac{L'}L(s,\mathcal{F})
	=&\, \frac{\log q}{2X^2}\sum_{f \in \CMcal{M}_{\leq 3X}}\frac{\Lambda_{X,\cF}(f)\chi_D(f)}{|f|^{s}}\\ 
	& +\frac{\log^2 q}{2X^2}\bigg(\frac{\eta_\cF \, q^{-Xs} (1-q^{-Xs})^2}{(1-q^{-s})^3}+\sum_{j=1}^{\kappa_\cF}
	\frac{(\alpha_j q^{\frac12-s})^X(1-(\alpha_j q^{\frac12-s})^X)^2}{(1-\alpha_j^{-1}q^{s-\frac12})^3}\bigg).
	\end{split}
	\end{equation}
By Lemma \ref{bnd-sum-zeros}  with $\sigma_0=\tfrac12+\tfrac{c}{X}$,
	\begin{equation}\label{bound on kappa}
	\begin{split}
	\sum_{j=1}^{\kappa_{\mathcal{F}}}  \bigg|\frac{(\alpha_jq^{\frac12-\sigma-it})^X(1-(\alpha_jq^{\frac12-\sigma-it})^X)^2}{(1-\alpha_j^{-1}q^{\sigma-\frac12+it})^3}\bigg| & \\
	\le 
	\frac{4X^2q^{X(\frac12-\sigma)}}{c^2\log^3 q} & \bigg(\bigg|\operatorname{Re}\frac{L'}{L}\big(\sigma_0+it,\cF\big)\bigg|+\frac{3\kappa_\cF \log q}{2} \bigg).
	\end{split}
	\end{equation}
Then by \eqref{log derivative version 3}, for $v$ such that $|v|\leq 1$ we have
	\begin{equation}\label{eq:interm 1}
	\begin{split}
	-\operatorname{Re} \frac{L'}L(\sigma+it,\mathcal{F})
	  = &\,\, \frac{\log q}{2X^2} 
	\operatorname{Re} \sum_{f \in \CMcal{M}_{\leq 3X}}\frac{\Lambda_{X, \cF}(f)\chi_D(f)}{|f|^{\sigma+it}} 
	 +\frac{2\nu q^{X(\frac12-\sigma)}}{c^2\log q}
	 \operatorname{Re} \frac{L'}{L}\big(\sigma_0+it,\cF\big)
	\\
	&+O\bigg(\kappa_\cF\, q^{X(\frac12-\sigma)}+ \frac{\eta_\cF\, q^{-X\sigma} |1-q^{-Xs}|^2}{X^2}\bigg).
	\end{split}
	\end{equation}
Evaluating this at $\sigma=\sigma_0$ and by rearranging the terms, we obtain
	\begin{align*}
	&-  \Big(1+\frac{2\nu q^{-X(\sigma_0-\frac12)}}{c^2\log q}\Big)
	\operatorname{Re} \frac{L'}{L}\big(\sigma_0+it,\cF\big)
	\\
	&\leq   \frac{\log q}{2X^2} \operatorname{Re} \sum_{f \in \CMcal{M}_{\leq 3X}}\frac{\Lambda_{X, \cF}(f)\chi_D(f)}{|f|^{\sigma_0+it}}
	+O\bigg(\kappa_\cF\, q^{-X(\sigma_0-\tfrac12)}+ \frac{\eta_\cF\, q^{-X\sigma_0}  |1+q^{-X\sigma_0}|^2}{X^2}\bigg).
	\end{align*}
We choose $c$ such that $c < \frac{1}{2\log q}$ so that $\bigg|1+\frac{2\nu q^{-X(\sigma_0-\frac12)}}{c^2\log^2 q} \bigg| >1-\frac2{c^2q^c\log q} \gg 1$. This proves that
	\begin{equation}\label{LprimeL-in primes}
	\operatorname{Re} \frac{L'}L\big(\sigma_0+it, \cF\big)
	=O\bigg(\frac1{X^2}\bigg|\sum_{f \in \CMcal{M}_{\leq 3X}}
	\frac{\Lambda_{X, \cF}(f)\chi_D(f)}{|f|^{\sigma_0+it}}\bigg|\bigg)+O(\kappa_\cF).
	\end{equation}
Also, by combining the above expression with \eqref{eq:interm 1}, we find that
	\begin{align*}
	-\operatorname{Re} \frac{L'}L(\sigma+it,\cF)
	=&\, \frac{\log q}{2X^2} \operatorname{Re} 
	\sum_{f \in \CMcal{M}_{\leq 3X}}\frac{\Lambda_{X, \cF}(f)\chi_D(f)}{|f|^{\sigma+it}}+O\big(\kappa_\cF q^{X(\frac12-\sigma)}\big)
	\\
	&+O\bigg(\frac{q^{X(\frac12-\sigma)}}{X^2}
\bigg|\sum_{f \in \CMcal{M}_{\leq 3X}}\frac{\Lambda_{X, \cF}(f)\chi_D(f)}{|f|^{\sigma_0+it}}\bigg| \bigg)+O\Big(\frac{\eta_\cF \, q^{-X\sigma}  |1+q^{-X\sigma}|^2}{X^2}\Big).
	\end{align*}
Integrating with respect to $\sigma$ from $\sigma_0$ to $\infty$ completes the proof of \eqref{eq:logLformula1}. 
	
For the imaginary part, again by \eqref{log derivative version 3} and \eqref{bound on kappa}, we have
	\begin{align*}
	 \operatorname{Im} \frac{L'}{L}\big(\sigma+it,\mathcal{F}\big)
	=& \,  \frac{\log q}{2X^2} \operatorname{Im} \sum_{f \in \CMcal{M}_{\leq 3X}}\frac{\Lambda_{X, \cF}(f)\chi_D(f)}{|f|^{\sigma+it}}+O\big(\kappa_{\mathcal{F}}\, q^{X(\frac12-\sigma)}\big)\\
	&+O\bigg(q^{X(\frac12-\sigma)}
	\Big| \operatorname{Re} \frac{L'}{L}\big(\sigma_0+it,\cF\big)\Big|\bigg)
+O\Big( \frac{\eta_{\cF}\, q^{-X\sigma} |1-q^{-X(\sigma+it)}|^2}{X^2}\Big).
	\end{align*}
Then by \eqref{LprimeL-in primes}, we obtain    
	\begin{align*}
	\operatorname{Im} \frac{L'}{L}\big(\sigma+it,\cF\big)
	=&\,   \frac{\log q}{2X^2} \operatorname{Im} \sum_{f \in \CMcal{M}_{\leq 3X}}\frac{\Lambda_{X, \cF}(f)\chi_D(f)}{|f|^{\sigma+it}}
	\\
	&
	+ O\bigg(\frac{q^{X(\frac12-\sigma)}}{X^2}\bigg|\sum_{f \in \CMcal{M}_{\leq 3X}}  \frac{\Lambda_{X, \cF}(f)\chi_D(f)}{|f|^{\sigma_0+it}}\bigg|  \bigg)
	+O\bigg(\kappa_\cF\, q^{X(1/2-\sigma)} \bigg)         
	\\
	& +O\bigg(\kappa_\cF\, q^{X(\frac12-\sigma)}
	+ \frac{\eta_\cF\, q^{-X\sigma} |1-q^{-X(\sigma+it)}|^2}{X^2}\bigg).
	\end{align*}
Note that the second error term in the above will be absorbed by the third error term.    
	
	Moreover, by choosing $\sigma=\sigma_0$ in the above, since $q$ is fixed, we obtain a similar bound to \eqref{LprimeL-in primes} for the imaginary part. 
	\begin{equation}\label{LprimeL-in primes:im}
	\operatorname{Im} \frac{L'}L\big(\sigma_0+it,\cF\big)
	=O\bigg(\frac1{X^2}\bigg|\sum_{f \in \CMcal{M}_{\leq 3X}}
	\frac{\Lambda_{X, \cF}(f)\chi_D(f)}{|f|^{\sigma_0+it}}\bigg|\bigg)+O(\kappa_\cF).
	\end{equation}
We then integrate both sides of the above equation over $\sigma \in [\sigma_0, \infty]$ to find 
	\begin{align*}
	\arg L\big(\sigma_0+it,\cF\big)
	=\,&\frac{1}{2X^2}\operatorname{Im} \sum_{f \in \CMcal{M}_{\leq 3X}}
	\frac{\Lambda_{X, \cF}(f)\chi_D(f)}{d(f)|f|^{\sigma_0+it}} \\
	&+O\bigg(\frac1{X^3}\bigg|\sum_{f \in \CMcal{M}_{\leq 3X}}\frac{\Lambda_{X, \cF}(f)\chi_D(f)}{|f|^{\sigma_0+it}}\bigg|\bigg)
	+O\Big(\frac{\kappa_\cF}{X}+\frac{\eta_{\mathcal{F}}}{X^3}\Big).
	\end{align*}
	Upon combining this with \eqref{eq:logLformula1}, and then separating the contribution of $f$ with $X < d(f) \leq 3X$ to the Dirichlet polynomial in the main term, the result follows.
\end{proof}


\subsection{Moments of Tails of Dirichlet Polynomials Near the Critical Line}

We will need to study the first error term in the above proposition, which was 
\[
\frac{1}{2X^2} \bigg| \sum_{f\in\CMcal{M}_{\leq3X}\setminus\CMcal{M}_{\leq X}} \frac{\Lambda_{X, \cF}(f)\chi_D(f)}{d(f)|f|^{\sigma_0+it}}\bigg| ,
\]
to be small on average. 
We set
\begin{align}\label{general set}
\mathcal{H}(\mathcal{F})=\begin{cases}
\mathcal{H}_n \quad &\text{if }\, \mathcal{F}=\chi_D,\\
\mathcal{H}_{n}^{\Delta}(C) \quad &\text{if }\, \mathcal{F}=E\otimes \chi_D, (C, N_E)=1.
\end{cases}
\end{align}
For a fixed $(C, N_E)=1$, the size of the set $\mathcal{H}_{n}^{\Delta}(C)$ is given by \eqref{cardinality in AP}.


\begin{lem}\label{moments of lambda polyl}
	For $k$ a positive integer, let $2< X \leq \tfrac{n}{4k}$ and $\sigma_0$ as in the hypothesis of Proposition \ref{logL-as-PX}. Then  
	\[
	\frac{1}{|\mathcal{H}(\mathcal{F})|}
	\sum_{D\in \mathcal{H}(\mathcal{F})}
	\bigg| \frac{1}{X^2} 
	\sum_{f\in\CMcal{M}_{\leq3X}\setminus\CMcal{M}_{\leq X}}
	\frac{\Lambda_{X, \mathcal{F}}(f) \chi_D(f)}{d(f) |f|^{\sigma_0+it}}
	\bigg|^k
	= O \bigg( \Big(\frac{48k}{e}\Big)^{\frac{k}2}+(24)^k \bigg).
	\] 
\end{lem}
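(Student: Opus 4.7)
I would prove the lemma by the standard even-moment method for character sums over $\mathcal{H}(\mathcal{F})$. Since $p\mapsto(\mathbb{E}|S|^{p})^{1/p}$ is non-decreasing in $p$, it suffices to bound the even moment $\mathbb{E}|S|^{2\ell}$ with $2\ell\in\{k,k+1\}$ chosen to be even (writing $S(D)$ for the Dirichlet polynomial inside the absolute value and $a_f=\Lambda_{X,\mathcal{F}}(f)/(d(f)|f|^{\sigma_0})$); the bound for odd $k$ then follows by raising the $2\ell$-moment bound to the power $k/(2\ell)$. I would expand $|S(D)|^{2\ell}=S(D)^{\ell}\overline{S(D)}^{\ell}$ into a $2\ell$-fold sum over prime powers $f_1,\ldots,f_{2\ell}\in\mathcal{M}_{\leq 3X}\setminus\mathcal{M}_{\leq X}$, and averaging over $D\in\mathcal{H}(\mathcal{F})$ would reduce matters to estimating $\frac{1}{|\mathcal{H}(\mathcal{F})|}\sum_D\chi_D(\prod_j f_j)$, whose analysis splits the computation into a \emph{diagonal} part (where $\prod_j f_j$ is a perfect square) and an \emph{off-diagonal} part.

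For the off-diagonal terms, Lemma~\ref{polya-vinogradov inequality} (for $\mathcal{F}=\chi_D$) or Lemma~\ref{sum over non-squares} (for $\mathcal{F}=E\otimes\chi_D$) would give a factor of shape $|\ell_1|^{\epsilon}q^{(-1/2+\epsilon)n}$ after normalization, where $\ell_1$ denotes the square-free part of $\prod_j f_j$. Using the hypothesis $X\leq n/(4k)$ to control $\deg\prod_j f_j\leq 6X\ell$, together with a crude $L^{1}$ estimate on $\sum_f|a_f|$ from the prime polynomial theorem, the total off-diagonal contribution is negligible relative to the stated bound. For the diagonal, Lemmas~\ref{square term evaluation} and~\ref{sum over squares} give $\mathbb{E}_D\chi_D(\prod_j f_j)=\prod_{P\mid\prod_j f_j}(1+|P|^{-1})^{-1}+O(q^{-n})$, which has absolute value at most $1$. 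Organising the resulting sum by the partition of $\{1,\ldots,2\ell\}$ induced by the identification of equal prime powers, the \emph{perfect pairings} (each $f_j$ coincides with exactly one partner) dominate and contribute
\[
\frac{(2\ell-1)!!}{X^{4\ell}}\bigg(\sum_f a_f^{\,2}\bigg)^{\ell}.
\]
Using $|\Lambda_{X,\mathcal{F}}(f)|\leq 2X^2\Lambda_\mathcal{F}(f)$ on $d(f)\leq 3X$, the Deligne-type bound $|\lambda_\mathcal{F}(P^m)|\leq m+1$, and Lemma~\ref{upper bound for truncated sum over primes } (for $\chi_D$) or \eqref{lambda on primes} (for $E\otimes\chi_D$), one estimates $\sum_f a_f^{\,2}\leq C X^4$ for an explicit constant $C$. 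Combined with Stirling's $(2\ell-1)!!\leq\sqrt{2}(2\ell/e)^\ell$, the pairing contribution becomes $O((Ck/e)^{k/2})$, which yields $(48k/e)^{k/2}$ once the constants are tracked carefully. The non-pairing diagonal configurations, in which some prime power appears with multiplicity $\geq 3$, are geometrically smaller in $\ell$ but carry a larger per-coincidence constant; summing over such partitions produces the $(24)^k$ error.

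The hardest part will be not the skeleton above but the precise book-keeping of constants needed to reach the explicit values $48$ and $24$; this requires careful use of the piecewise formula \eqref{Lambda X def} on the sub-intervals $X<d(f)\leq 2X$ and $2X<d(f)\leq 3X$, along with the observation that $|f|^{2\sigma_0}$ differs from $|f|$ by only a bounded multiplicative factor on the entire range, thanks to the choice $\sigma_0=\tfrac12+\tfrac{c}{X}$ with $c<\tfrac{1}{2\log q}$. In the elliptic case, additional care is needed because the sum runs over a single arithmetic progression modulo $N_E$ rather than over all of $\mathcal{H}_n$; this is handled uniformly via the cardinality estimate \eqref{cardinality in AP}, and the failure of $\lambda_\mathcal{F}$ to be totally multiplicative on prime powers is controlled by the Deligne-type bound together with the orthogonality over the progression provided by Lemma~\ref{sum over squares}.
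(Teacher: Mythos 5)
Your skeleton (reduce to even moments by Lyapunov, expand, average over $D$, split diagonal/off-diagonal, organize the diagonal by partitions of the index set) is a legitimate alternative to what the paper does. The paper instead \emph{first} separates the Dirichlet polynomial into its prime part and its prime-square part, bounds each $k$-th moment separately via $|a+b|^k\ll 2^k(|a|^k+|b|^k)$, and handles the prime-square piece $S_2$ deterministically because $\chi_D(P^2)\equiv 1$ for $P\nmid D$; only the prime piece $S_1$ is expanded and subjected to the square/non-square dichotomy and a Stirling estimate. The two routes should reach the same bound, but yours has a genuine gap in the diagonal book-keeping.

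The issue is your account of where the $(24)^k$ term comes from. The summation variable $f$ in $\mathcal{M}_{\leq3X}\setminus\mathcal{M}_{\leq X}$ runs over \emph{prime powers}, and $\chi_D(P^2)$ is already a square, so the condition $\prod_j f_j=\square$ is \emph{not} equivalent to ``every block of the partition (by equal $f_j$) has even size.'' A singleton block $\{j\}$ with $f_j=P^2$ contributes $P$ with even multiplicity to the product, so it is a diagonal configuration that is neither a perfect pairing nor a ``multiplicity $\geq 3$'' coincidence. These singleton prime-square contributions are \emph{not} small: after the $1/X^2$ normalization, $\sum_{P^2\in\mathcal{M}_{\leq 3X}\setminus\mathcal{M}_{\leq X}}|a_{P^2}|/X^2 = O(1)$ (bounded but not $o(1)$), so allowing up to $k$ such singletons produces a geometric factor $C^k$ — this, not higher-multiplicity coincidences, is the actual source of the $(24)^k$. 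Your own description of the multiplicity-$\geq 3$ terms as ``geometrically smaller in $\ell$'' makes the attribution internally inconsistent: something that is geometrically small in $\ell$ cannot sum to $(24)^k$. To repair your argument, you need to enlarge your diagonal classification to allow singleton (and more generally odd-size) blocks whenever the shared value $f_j$ is itself a perfect square, and estimate those blocks by the $L^1$ bound on the prime-square tail; this is exactly the content of the paper's $S_2$ computation, and once that is done the main pairing term still gives the $(48k/e)^{k/2}$ as you outline.
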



\begin{proof}
We can bound the $k$-th moment as follows. 
	\begin{align*}
	&\sum_{D\in \mathcal{H}(\mathcal{F})}
	\bigg| \frac{1}{X^2} 
	\sum_{f\in\CMcal{M}_{\leq3X}\setminus\CMcal{M}_{\leq X}}
	\frac{\Lambda_{X, \mathcal{F}}(f) \chi_D(f)}{d(f) |f|^{\sigma_0+it}}
	\bigg|^k 
	\\
	& \ll
	2^k \bigg| \frac{1}{X^2} 
	\sum_{D\in \mathcal{H}(\mathcal{F})}
	\sum_{P\in \CMcal{P}_{\le 3X}\setminus\CMcal{P}_{\le X}}
	\frac{\Lambda_{X, \mathcal{F}}(P) \chi_D(P)}{d(P) |P|^{\sigma_0+it}}
	\bigg|^k 
	+ 2^k \bigg| \frac{1}{X^2} 
	\sum_{D\in \mathcal{H}(\mathcal{F})}
	\sum_{P^2\in \CMcal{P}_{\le 3X}\setminus\CMcal{P}_{\le X}}
	\frac{\Lambda_{X, \mathcal{F}}(P^2) \chi_D(P^2)}{d(P^2) |P^2|^{\sigma_0+it}}
	\bigg|^k  
	\\
	&\, := S_1+S_2.
	\end{align*}
To further bound $S_2$, we note the easy bound $|\Lambda_{X, \mathcal{F}}(P^2)| \leq 8X^2 d(P)$, which uses $|\lambda(P)|\leq 2$. We also use the fact that $d(P^2)>X$, since $P^2\in \CMcal{P}_{\le 3X}\setminus\CMcal{P}_{\le X}$, to obtain
	\[
	S_2 \leq  16^k \sum_{D\in \mathcal{H}(\mathcal{F})} 
	\bigg( 
	\sum_{P^2\in\CMcal{P}_{\le 3X}\setminus\CMcal{P}_{\le X}}
	\frac{d(P) }{X |P|^{2\sigma_0}}
	\bigg)^k.
	\] 
Then by the prime polynomial theorem, 
	\[
	S_2 \leq \Big( \frac{16}{X}\Big)^k |\mathcal{H}(\mathcal{F})|  \bigg(  \sum_{X< 2m \leq 3X}
	\frac{m q^m}{m q^{2m\sigma_0}}\bigg)^k
	\ll (24)^k\, |\mathcal{H}(\mathcal{F})| .
	\]
For $S_1$, we have 
	\[
	\frac{S_1}{|\mathcal{H}(\mathcal{F})|}
	= \frac{1}{X^{2k}|\mathcal{H}(\mathcal{F})|} \sum_{P_1, \ldots, P_k \in \CMcal{P}_{\le 3X}\setminus\CMcal{P}_{\le X}} 
	\frac{\Lambda_{X, \mathcal{F}}(P_1) \cdots \Lambda_{X,\mathcal{F}}(P_k) }{d(P_1) \cdots d(P_k) |P_1 \cdots P_k |^{\sigma_0+it}}
	\sum_{D\in \mathcal{H}(\mathcal{F})}  \chi_D(P_1 \cdots P_k) .
	\]
	We first consider the case where $P_1 \dots P_k$ is a perfect square. Then $k$ is even and the irreducibles must pair up. By Lemmas \ref{square term evaluation} and \ref{sum over squares} depending on the family, the above is
	\begin{align*}
	\frac{1}{X^{2k}} \sum_{P_1, \ldots, P_k \in \CMcal{P}_{\le 3X}\setminus\CMcal{P}_{\le X}}
	\frac{\Lambda_{X, \mathcal{F}}(P_1) \cdots \Lambda_{X,\mathcal{F}}(P_k) }{d(P_1) \cdots d(P_k) |P_1 \cdots P_k |^{\sigma_0+it}}
	\Big(\prod_{1\leq j \leq k}\left(1+\frac{1}{|P_j|}\right)^{-1}+\mathcal{E}_{\mathcal{F}}\Big),
	\end{align*}
where  
	\begin{align}\label{error for square over familes}
	\mathcal{E}_{\mathcal{F}} \ll 
	\begin{cases}
	q^{-n} & \text{if } \mathcal{F} = \chi_D, \\[6pt]
	q^{-(3/4+\epsilon)n} & \text{if } \mathcal{F} = E \otimes \chi_D,
	\end{cases}
	\end{align}
Again, using $|\Lambda_{X, \mathcal{F}}(P)|\leq 4X^2  \Lambda(P)$ for $X \leq d(P) \leq 3X$ and the prime polynomial theorem, we obtain
	\begin{align*}
	\frac{S_1}{|\mathcal{H}(\mathcal{F})|} 
	&\ll
	\frac{k!}{(\frac{k}2)! 2^{\frac{k}2}}
	\frac{1}{X^{2k}}
	\bigg(
	\sum_{P\in \CMcal{P}_{\le 3X}\setminus\CMcal{P}_{\le X}} 
	\frac{16X^4 \Lambda(P)^2 }{d(P)^2 |P |^{2\sigma_0}}
	\frac{|P|}{|P|+1}\bigg)^{\frac{k}2}
	\\
	& \ll \frac{k!}{(\frac{k}2)! } 
	\Big( \sum_{X\leq m\leq 3X}\frac{8q^m}{mq^{2m(\sigma_0-\frac12)} (q^m+1)}\Big)^{\frac{k}2}
	\ll \frac{k!}{(\frac{k}2)! X^{\frac{k}2}} 
	\Big( \sum_{X\leq m\leq 3X}  \frac{1}{q^{2m(\sigma_0-\frac12)}}\Big)^{\frac{k}2}.
	\end{align*}
Thus, $S_1 \ll \frac{k! (24)^{\frac{k}2}}{ (\frac{k}2)! }$, which further gives by Stirling's approximation 
	\[
	S_1\ll \Big(\frac{48k}{e}\Big)^{\frac{k}{2}}.
	\]
	
	For the other case where $P_1 \dots P_k$ is not a perfect square, we apply Lemma \ref{polya-vinogradov inequality}, or \ref{sum over non-squares} depending on the family, to write 
	\[
	\frac{S_1}{|\mathcal{H}(\mathcal{F})|}
	\ll \frac{q^{-n/2}}{X^{2k}} \sum_{P_1, \ldots, P_k \in \CMcal{P}_{\le 3X}\setminus\CMcal{P}_{\le X}}
	\frac{\Lambda_{X, \mathcal{F}}(P_1) \cdots \Lambda_{X,\mathcal{F}}(P_k) }{d(P_1) \cdots d(P_k) |P_1 \cdots P_k |^{\sigma_0}} |P_1 \cdots P_k|^{\epsilon}.
	\]
 Then by $\Lambda_{X, \mathcal{F}}(P) \leq 4X^2 d(P)$, the above is
	\begin{align*}
	\leq \frac{q^{-n/2}}{X^{2k}} \sum_{P_1, \ldots, P_k \in \CMcal{P}_{\le 3X}\setminus\CMcal{P}_{\le X}}
	\frac{4^k X^{2k} }{ |P_1 \cdots P_k |^{\sigma_0-\epsilon}} .
	\end{align*}
Again, by the prime polynomial theorem, this is  
	\[
	\leq 4^k q^{-n/2} \Big(\sum_{X \leq m \leq 3X} \frac{ q^m }{m q^{m\sigma_0-m\epsilon}}\Big)^k
	\leq 
	4^k q^{-n/2} \Big(\sum_{X \leq m \leq 3X} \frac{ q^{(1/2+\epsilon)m}}{m}\Big)^k
	\leq 4^k q^{-n/2} 
	q^{3kX\big(\tfrac12+2\epsilon\big)}. 
	\]
For $X \leq \tfrac{n}{4k}$, we have our sum is bounded by $ \leq  4^k$. This completes the proof.
\end{proof}

Finally, we estimate the moments of the third error term in the approximation in Proposition \ref{logL-as-PX}.


\begin{lem}\label{moments of lambda polyl2}
In the setting of Lemma \ref{moments of lambda polyl} and under the same hypotheses, we have 
	\[
	\frac{1}{|\CMcal{H}(\mathcal{F})|}
	\sum_{D\in \CMcal{H}(\mathcal{F})}
	\bigg| \frac{1}{X^3} 
	\sum_{f\in \CMcal{M}_{\leq 3X}}
	\frac{\Lambda_{X, \mathcal{F}}(f) \chi_D(f)}{|f|^{\sigma_0+it}}
	\bigg|^k
	= O\bigg( \Big(\frac{72k}{e} \Big)^{\frac{k}{2}}+\Big(\frac4{X}\Big)^k\bigg).
	\] 
\end{lem}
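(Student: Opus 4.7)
The proof closely parallels that of Lemma~\ref{moments of lambda polyl}, adjusted for the altered normalization $1/X^3$, the missing factor $1/d(f)$, and the broader summation range $f\in\CMcal{M}_{\leq 3X}$. Split the Dirichlet polynomial into two pieces according to whether $f$ is an irreducible or a proper prime power,
\begin{align*}
S_1 &= \frac{1}{X^3}\sum_{P\in \CMcal{P}_{\leq 3X}} \frac{\Lambda_{X,\mathcal{F}}(P)\chi_D(P)}{|P|^{\sigma_0+it}}, \\
S_2 &= \frac{1}{X^3}\sum_{\substack{j\geq 2\\ P^j\in\CMcal{M}_{\leq 3X}}} \frac{\Lambda_{X,\mathcal{F}}(P^j)\chi_D(P^j)}{|P^j|^{\sigma_0+it}},
\end{align*}
so that $|S_1+S_2|^k \leq 2^{k-1}\bigl(|S_1|^k + |S_2|^k\bigr)$, and handle each piece separately.

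For $S_2$, apply the triangle inequality with the pointwise bound $|\Lambda_{X,\mathcal{F}}(P^j)|\leq 4X^2 d(P)$ (a consequence of $|\lambda_{\mathcal{F}}(P^j)|\leq 2$ combined with the piecewise definition~\eqref{Lambda X def}). The dominant $j=2$ term reduces via the prime polynomial theorem to a geometric sum $\sum_{m\leq 3X/2} q^{-2mc/X}$; since $\sigma_0=\tfrac12+c/X$ with $c<\tfrac{1}{2\log q}$, this sum is $O(X)$, so $|S_2|\ll 1$ uniformly in $D$. Thus $|S_2|^k$ is bounded by a constant raised to the $k$th power, which for $k\geq 1$ is absorbed into the main term $(72k/e)^{k/2}$.

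For $S_1$ with $k$ even, expand the $k$-th moment and isolate the diagonal contribution where $P_1\cdots P_k$ is a perfect square, which forces a pairing of the indices. Lemma~\ref{square term evaluation} (or Lemma~\ref{sum over squares} for the elliptic curve family) controls the resulting character average, producing the expression
\[
\frac{k!}{(k/2)!\,2^{k/2}}\cdot \frac{1}{X^{3k}}\Bigg(\sum_{P\in \CMcal{P}_{\leq 3X}}\frac{|\Lambda_{X,\mathcal{F}}(P)|^2}{|P|^{2\sigma_0}}\cdot\frac{|P|}{|P|+1}\Bigg)^{k/2}.
\]
Combine $|\Lambda_{X,\mathcal{F}}(P)|^2\leq 4X^4\lambda_{\mathcal{F}}(P)^2 d(P)^2$ with the key estimate $\sum_{P\in\CMcal{P}_{\leq 3X}}\lambda_{\mathcal{F}}(P)^2 d(P)^2/|P|^{2\sigma_0}\ll X^2$, proved using the prime polynomial theorem when $\mathcal{F}=\chi_D$ and via partial summation against~\eqref{lambda on primes} when $\mathcal{F}=E\otimes\chi_D$. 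Applying Stirling's approximation to the combinatorial prefactor then yields the diagonal bound $\ll(72k/e)^{k/2}$.

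For the off-diagonal terms, write $P_1\cdots P_k=\ell_1\ell_2^2$ with $\ell_1\ne 1$ squarefree and invoke P\'{o}lya--Vinogradov (Lemma~\ref{polya-vinogradov inequality}) or Lemma~\ref{sum over non-squares}: the averaged character sum is bounded by $|\ell_1|^{\epsilon} q^{-(1/2-\epsilon)n}$. Distributing $|\ell_1|^{\epsilon}\leq\prod_i|P_i|^{\epsilon}$ and bounding each $|\Lambda_{X,\mathcal{F}}(P_i)|\leq 4X^2 d(P_i)$, the sum over primes is at most $(4X^2)^k q^{3kX(1/2+\epsilon)}$; the hypothesis $X\leq n/(4k)$ then forces the resulting exponent $-n/2+n\epsilon+3kX(1/2+\epsilon)$ to be negative for $\epsilon$ small, producing off-diagonal $\ll(4/X)^k$. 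Odd $k$ reduces to adjacent even moments via Cauchy--Schwarz. The principal technical hurdle is the inner estimate $\sum_P \lambda_{\mathcal{F}}(P)^2 d(P)^2/|P|^{2\sigma_0}\ll X^2$ in the elliptic curve case, which depends crucially on~\eqref{lambda on primes}; the secondary fiddly point is tracking constants carefully enough to reach the explicit factor $72$ in the leading bound.
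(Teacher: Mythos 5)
Your proposal is correct and carries out essentially the proof the paper leaves implicit: the paper's own proof of this lemma is the single line ``This is similar to the proof of Lemma~\ref{moments of lambda polyl},'' and you faithfully track the three modifications (normalization $1/X^3$ in place of $1/(X^2 d(f))$, the wider range $d(f)\le 3X$, the absent $1/d(f)$), repeating the same split into an irreducible part and a prime-power part, the pairing argument with character orthogonality via Lemmas~\ref{square term evaluation}/\ref{sum over squares} for the diagonal, P\'{o}lya--Vinogradov (Lemma~\ref{polya-vinogradov inequality}) or Lemma~\ref{sum over non-squares} for the off-diagonal, and the Rankin--Selberg input~\eqref{lambda on primes} for the orthogonal family. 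The only stylistic deviations are cosmetic: you pass from odd to even moments by Cauchy--Schwarz where the paper's template simply observes that odd $r$ forces a non-square product, and the absorption of the bounded prime-power contribution into the main term is stated a bit loosely for small $k$ (for bounded $k$ the $O$-constant absorbs it, and for growing $k$ the $(72k/e)^{k/2}$ factor dominates any fixed constant raised to the $k$-th power), neither of which affects the validity of the argument.
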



\begin{proof}
	This is similar to the proof of Lemma \ref{moments of lambda polyl}.
\end{proof}


\section{$\log L(s, \mathcal{F})$ Near vs at the Critical Line}\label{discrepancy}

We have so far proven an approximate formula for $\log L(\sigma_0+it,\mathcal{F})$. The purpose of this section is to show that we may study these values at $1/2+it$ in place of those of $\log L(\sigma_0+it,\mathcal{F})$.   

Our first result considers microscopic $t$. This means that we are very close to the real line, and therefore, our result and its proof are very similar to those of Proposition 6.1 in \cite{DL}. We do this for the symplectic one first, and point out the differences for the proof for the orthogonal family as necessary.


\subsection{Microscopic Regime}


\begin{prop}\label{difference between shift of logarithm}
Consider $t\in \mathbb{R}$ such that $|t|g<\infty \pmod{\frac{4\pi }{\log q}}$. Assume that the low lying zeros hypothesis holds and $g\big(\sigma_0-\tfrac12\big)\to \infty$ but $g\big(\sigma_0-\tfrac12\big)=o(\sqrt{\log n})$ as $g\to \infty$. Then for a subset $ \CMcal{H}'_{n}$ of $ \CMcal{H}_{n}$ with measure $1-o(1)$, we have
	\[
	\frac{1}{| \CMcal{H}_{n}|}\sum_{ D \in  \CMcal{H}'_{n}} \Big|\log\big| L\big(\sigma_0+it,\chi_D\big)\big| -\log \big| L\big(\tfrac12+it,\chi_D\big)\big|\Big|
	=o\big(\sqrt{\log g}\big).
	\]
\end{prop}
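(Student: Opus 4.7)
The plan is to adapt the strategy of \cite[Proposition~6.1]{DL}, which establishes the analog for $t=0$. I would begin from the identity
\[
\log\bigl|L\bigl(\sigma_0+it,\chi_D\bigr)\bigr|-\log\bigl|L\bigl(\tfrac12+it,\chi_D\bigr)\bigr|=\int_{1/2}^{\sigma_0}\operatorname{Re}\frac{L'}{L}(\sigma+it,\chi_D)\,d\sigma,
\]
substitute the zero expansion \eqref{eq:log der symp zeros}, and integrate termwise via partial fractions. With $\alpha_j=e(-\theta_{j,D})$, $\phi_j=2\pi\theta_{j,D}+t\log q$, and $r=q^{1/2-\sigma_0}$, a short calculation shows $\int_{1/2}^{\sigma_0}\operatorname{Re}(1-\alpha_jq^{1/2-\sigma-it})^{-1}d\sigma=(\sigma_0-\tfrac12)+(\log q)^{-1}\log|(1-re^{-i\phi_j})/(1-e^{-i\phi_j})|$, so that the $-2g(\sigma_0-\tfrac12)\log q$ contribution cancels and one arrives at the closed form
\[
\log\bigl|L\bigl(\sigma_0+it,\chi_D\bigr)\bigr|-\log\bigl|L\bigl(\tfrac12+it,\chi_D\bigr)\bigr|=\tfrac12\sum_{j=1}^{2g}\log\!\left(r+\frac{(1-r)^2}{4\sin^2(\phi_j/2)}\right)+O(1).
\]

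Next I would introduce the good set $\mathcal{H}'_n\subset\mathcal{H}_n$ consisting of those $D$ with $\min_j|\sin(\phi_j/2)|\geq 1/(yg)$ for some $y=y(g)\to\infty$ chosen to grow slowly. The microscopic condition $|t|g<\infty\pmod{4\pi/\log q}$ forces $t\log q/(2\pi)\equiv O(1/g)\pmod 1$, so the translation from $\theta_{j,D}$ to $\phi_j/(2\pi)$ is $O(1/g)$; the stated low lying zeros hypothesis then yields $|\mathcal{H}'_n|=(1-o(1))|\mathcal{H}_n|$ after enlarging $y$ by an absolute constant. On $\mathcal{H}'_n$ I would split the sum at the threshold $|\sin(\phi_j/2)|\asymp 1-r\asymp(\sigma_0-\tfrac12)\log q$: terms above it contribute $O((1-r)^2\sin^{-2}(\phi_j/2))$ each, via the expansion $\log(r+x^2)=\log r+O(x^2)$, while terms below it (of typical count $O(g(\sigma_0-\tfrac12))$) contribute $O(\log(yg(\sigma_0-\tfrac12)))$ each.

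Taking the $L^1$ average over $\mathcal{H}'_n$ reduces the problem to estimating expressions of the form $|\mathcal{H}_n|^{-1}\sum_{D}\sum_j F(\sin(\phi_j/2))$ for various test functions $F$, which are accessible via the one-level density estimates underlying the low lying zeros hypothesis (cf.\ \cite[Section~2.4]{DL}). Combined with the hypothesis $g(\sigma_0-\tfrac12)=o(\sqrt{\log n})$, this should deliver the desired bound $o(\sqrt{\log g})$. The main obstacle is that, pointwise in $D$, the logarithmic factor associated to zeros near the threshold $1/(yg)$ is too large to produce the stated bound directly; the argument must exploit the sparsity of such "dangerous" zeros on average. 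This is precisely the place where the microscopic condition $|t|g<\infty$ enters in an essential way, since only then does the stated unshifted low lying zeros hypothesis transfer into a shifted version controlling $\phi_j/(2\pi)$ rather than $\theta_{j,D}$, and only then is the number of zeros within $O(1/g)$ of the shift point small enough in expectation for the above estimates to collapse into $o(\sqrt{\log g})$.
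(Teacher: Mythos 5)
Your closed form is correct and, after a reparametrization, is exactly what the paper obtains from integrating the zero expansion \eqref{eq:log der symp zeros}: the quantity
\[
\log q\Big(g\big(\tfrac12-\sigma_0\big)+\sum_{j=1}^{2g}\operatorname{Re}\int_0^{\sigma_0-\frac12}\frac{(1+\alpha_j q^{-\sigma-it})\,d\sigma}{2(1-\alpha_j q^{-\sigma-it})}\Big)
\]
equals your $\tfrac12\sum_j\log\big(r+\tfrac{(1-r)^2}{4\sin^2(\phi_j/2)}\big)$ up to the $O(1)$ trivial-zero term. The way you restrict to the good set $\mathcal H'_n$ via the low-lying-zeros hypothesis (after absorbing the microscopic shift into the implied constant in $y$) is also what the paper does in the first paragraph of the proof.

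Where your sketch diverges, and where it is genuinely under-developed, is the estimation of the ``above-threshold'' sum $\sum_{|\sin(\phi_j/2)|\gtrsim 1-r}(1-r)^2\sin^{-2}(\phi_j/2)$. You propose to treat this as an instance of the one-level density, but the natural test function here decays like $x^{-2}$ near the threshold; it is neither entire of exponential type nor compactly supported in Fourier, so Rudnick's unconditional density statement \eqref{one level density} (with $\hat\phi$ supported in $(-2,2)$) does not apply directly. One would need a Beurling--Selberg majorant/minorant construction of the cut-off test function and a separate check that the support condition is met after the microscopic rescaling --- none of which is free, and none of which is provided. The low-lying-zeros hypothesis as stated only controls the smallest $|\theta_{j,D}|$; it does not give a quantitative one-level density. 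The paper circumvents all of this: it passes from the zero sum to the weighted Dirichlet polynomial $\sum_{f\in\mathcal M_{\leq 3X}}\Lambda_{X,\chi_D}(f)\chi_D(f)/|f|^{\sigma_0+it}$ via \eqref{LprimeL-in primes} (which is an application of Lemma~\ref{bnd-sum-zeros} and of the positivity used in Lemma~\ref{lemma Altug Tsim}), and then controls the $L^1$ average by the explicit moment bound of Lemma~\ref{moments of lambda polyl2}. This both avoids the band-limitation issue and reuses machinery already developed for Proposition~\ref{logL-as-PX}. Your ``below-threshold'' count $O(g(\sigma_0-\tfrac12))$ is also only valid on average (the zero-counting fluctuation $S(\theta)$ can be of size $\sqrt{\log g}$ for individual $D$), but since the end-goal is an $L^1$ bound that point is fine once made precise.

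In short: the same closed form, the same good set, but the crucial quantitative step is asserted rather than argued, and the tool you invoke (one-level density for a non-bandlimited $F$) is not the one that works out of the box. The paper's route through $\operatorname{Re}\tfrac{L'}{L}(\sigma_0+it)$ and moments of the weighted Dirichlet polynomial is the efficient way to close the estimate.
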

Note that as in \cite{DL}, we won't study the argument of the $L$-function within the microscopic regime. The reason for this is that the variance of the imaginary part of $\mathcal{D}_X\big(\sigma_0+it,\chi_D\big)$ is $0$ within this regime, and our method doesn't provide a result on the distribution of $\arg{ L\big(\sigma_0+it,\chi_D\big)}$. It would be interesting to determine the exact normalization factor for which a central limit theorem holds in this regime.

\begin{proof}
Given that $t$ lies in the regime, by the low lying zeros hypothesis there exists a subset
 $\mathcal{H}'_{n} \subset \mathcal{H}_{n},$
of measure $1 - o(1)$, such that for all $D \in \mathcal{H}'_{n}$ and for all zeros of $L(s,\chi_D)$, we have 
\begin{equation*}
\min_{1 \leq j \leq 2g} \left| \theta_{j,D}\right| > \frac{1}{yX} 
\implies 
\min_{1 \leq j \leq 2g} \left| \theta_{j,D} + \frac{t \log q}{2\pi} \right| > \frac{1}{yX},
\end{equation*}
where  $y = y(g) \to \infty$ as  $g \to \infty$, and will be chosen later in an appropriate manner.

Following the proof in~\cite[Proposition 6.1]{DL} and using \eqref{LprimeL-in primes}, we obtain
\begin{align}
&\log \big| L\big(\sigma_0+it,\chi_D\big)\big|
- \log \big|L\big(\tfrac12+it,\chi_D\big)\big| \nonumber\\ 
&=\log q\Big(g \big(\tfrac12-\sigma_0\big)
+\eta  \operatorname{Re} \int_0^{\sigma_0-\frac12} 
\frac{ q^{-\sigma-1/2-it}d\sigma }{1-q^{-\sigma-1/2-it}}
+ \sum_{j=1}^{2g} \, \operatorname{Re} \int_0^{\sigma_0-\frac12}
\frac{(1+\alpha_j q^{-\sigma-it})d\sigma}{2(1-\alpha_j q^{-\sigma-it})}\Big)  \label{eq:at sigma0 at onehalf}\\ 
&\ll \frac{y\big(\sigma_0-\tfrac12\big)}{X^2}
\bigg|
\sum_{f \in \mathcal{M}_{\leq 3X}} 
\frac{\Lambda_{X, \mathcal{F}}(f)\chi_D(f)}{|f|^{\sigma_0+it}}
\bigg|
+\, gy\big(\sigma_0-\tfrac12\big)
+ g\big(\sigma_0-\tfrac12\big)\log q
+ 1.\nonumber
\end{align}

We now choose the parameters $\sigma_0, X,$ and $y$ as functions of $g$ such that 
\[
\sigma_0-\tfrac12 = \frac{c}{X} 
\quad \text{and} \quad 
\frac{yg}{X} = o(\sqrt{\log n}).
\]
With this choice, the hypotheses of the proposition are satisfied. Note that the condition 
$\tfrac{yg}{X} = o(\sqrt{\log n})$ further implies that
$
yg\big(\sigma_0-\tfrac12\big) = o(\sqrt{\log n}),$
and thus, the above inequality will yield the desired bound once we show that
\[
\frac{1}{| \mathcal{H}_n |}
\sum_{D \in \mathcal{H}'_n}
\bigg|
\frac{y\big(\sigma_0-\tfrac12\big)}{X^2}
\sum_{f \in \mathcal{M}_{\leq 3X}} 
\frac{\Lambda_{X, \mathcal{F}}(f)\chi_D(f)}{|f|^{\sigma_0+it}}
\bigg|
= O(y^2) = o(\log n),
\]
for a subset  $\mathcal{H}'_n \subset \mathcal{H}_n$ with  $| \mathcal{H}_n \setminus \mathcal{H}'_n | = o(1)$.
This follows from the proof of \cite[Proposition~6.1]{DL} together with Lemma~\ref{moments of lambda polyl2}.
\end{proof}

We now turn to linear combinations and record the following corollary. 


\begin{cor}\label{difference between logarithm micro}
Assume that $\vec{t}$ and $\sigma_0$ are as in Proposition \ref{difference between shift of logarithm} and that the low lying zeros hypothesis holds. Then for a subset $ \CMcal{H}'_{n}$ of $ \CMcal{H}_{n}$ with measure $1-o(1)$ as $g\to\infty$, 
	\[
	\frac{1}{| \CMcal{H}_{n}|}\sum_{ D \in  \CMcal{H}'_{n}} 
	\Big| \log\big| \mathfrak{L}_{\vec{a}, \vec{t}}\big(\sigma_0, \chi_D\big)\big| 
	- \log\big| \mathfrak{L}_{\vec{a}, \vec{t}}\big(\tfrac12, \chi_D\big)\big|\Big|
	=o\big(\sqrt{\log g}\big),
	\]
	where $\mathfrak{L}_{\vec{a}, \vec{t}}\left(s, \chi_D\right)$ is defined by \eqref{def-L-linear combo}.
\end{cor}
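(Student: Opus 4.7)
The plan is to reduce the corollary to the single-shift estimate of Proposition \ref{difference between shift of logarithm} via the triangle inequality. Directly from definition \eqref{def-L-linear combo}, taking logarithms of absolute values yields
\[
\log\bigl|\mathfrak{L}_{\vec{a},\vec{t}}(s,\chi_D)\bigr| = \sum_{j=1}^{k} a_j \, \log\bigl|L(s+it_j,\chi_D)\bigr|,
\]
so, telescoping the difference between $s=\sigma_0$ and $s=\tfrac12$ and applying the triangle inequality,
\[
\Bigl|\log|\mathfrak{L}_{\vec{a},\vec{t}}(\sigma_0,\chi_D)| - \log|\mathfrak{L}_{\vec{a},\vec{t}}(\tfrac12,\chi_D)|\Bigr|
\le \sum_{j=1}^{k} |a_j| \cdot \Bigl|\log|L(\sigma_0+it_j,\chi_D)| - \log|L(\tfrac12+it_j,\chi_D)|\Bigr|.
\]

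Next, I would invoke Proposition \ref{difference between shift of logarithm} separately for each shift $t=t_j$, $j=1,\dots,k$. Since $\vec{t}$ lies in the microscopic regime, each $t_j$ satisfies $|t_j|g<\infty \pmod{4\pi/\log q}$, and $\sigma_0$ satisfies precisely the hypotheses of the proposition. Hence for each $j$ we obtain an exceptional set $\mathcal{H}'_{n,j} \subseteq \mathcal{H}_n$ of relative measure $1-o(1)$ on which
\[
\frac{1}{|\mathcal{H}_n|}\sum_{D \in \mathcal{H}'_{n,j}}\Bigl|\log|L(\sigma_0+it_j,\chi_D)| - \log|L(\tfrac12+it_j,\chi_D)|\Bigr| = o\bigl(\sqrt{\log g}\bigr).
\]
Setting $\mathcal{H}'_n := \bigcap_{j=1}^{k} \mathcal{H}'_{n,j}$, a union bound (valid because $k$ is a fixed positive integer independent of $n$) yields $|\mathcal{H}_n \setminus \mathcal{H}'_n|/|\mathcal{H}_n| \le \sum_{j=1}^{k}|\mathcal{H}_n \setminus \mathcal{H}'_{n,j}|/|\mathcal{H}_n| = o(1)$. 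Summing the triangle inequality over $D \in \mathcal{H}'_n \subseteq \bigcap_j \mathcal{H}'_{n,j}$ and substituting the $k$ individual bounds gives
\[
\frac{1}{|\mathcal{H}_n|}\sum_{D \in \mathcal{H}'_n}\Bigl|\log|\mathfrak{L}_{\vec{a},\vec{t}}(\sigma_0,\chi_D)| - \log|\mathfrak{L}_{\vec{a},\vec{t}}(\tfrac12,\chi_D)|\Bigr|
\le \Bigl(\sum_{j=1}^{k} |a_j|\Bigr) \cdot o\bigl(\sqrt{\log g}\bigr) = o\bigl(\sqrt{\log g}\bigr),
\]
as $\vec{a}$ and $k$ are fixed.

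The substantive work is already packaged into Proposition \ref{difference between shift of logarithm}, whose proof invokes the low lying zeros hypothesis to control $\int_{\frac12}^{\sigma_0}\mathrm{Re}\,(L'/L)(\sigma+it,\chi_D)\,d\sigma$ for all but $o(1)$-proportion of the discriminants $D$. No obstacle arises in the linearization: the only potential concern is whether the $k$ exceptional sets from distinct shifts interact badly, and this is defused immediately by the finite union bound, since the number of shifts $k$ is fixed while $|\mathcal{H}_n \setminus \mathcal{H}'_{n,j}|/|\mathcal{H}_n|$ is $o(1)$ for each $j$.
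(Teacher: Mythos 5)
Your proof is correct and takes essentially the same approach as the paper, which simply states that the result follows from the triangle inequality together with Proposition \ref{difference between shift of logarithm}. You have fleshed out the one-line argument with the expansion $\log|\mathfrak{L}_{\vec{a},\vec{t}}(s,\chi_D)| = \sum_j a_j \log|L(s+it_j,\chi_D)|$, the application of the proposition to each shift $t_j$, and the union bound over the $k$ exceptional sets, all of which the paper leaves implicit.
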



\begin{proof}
	This follows immediately by applying the triangle inequality together with Proposition \ref{difference between shift of logarithm}.
\end{proof}

We now briefly address the differences from the symplectic family, focusing on the distribution of $\log |L(\sigma_0+it, E\otimes \chi_D)|$ for suitable $\sigma_0$ near one-half within an arithmetic progression.


\begin{prop}\label{close to 1/2-line}
	Assume that $t\in \mathbb{R}$ lies in the microscopic regime. Suppose that, low lying zeros hypothesis for twists of elliptic curves holds for $\{L(s, E\otimes \chi_D)\}_{D\in  \CMcal{H}_{n}^{\Delta}(C)}$ for each $(C, N_E)=1$. Also assume that $n\left(\sigma_0-1/2\right)\to \infty$ and $n\left(\sigma_0-1/2\right)=o\left(\sqrt{\log n}\right)$ as $n \to \infty$. Then there exist a subset $\widetilde{ \CMcal{H}}_{n}^{\Delta}(C) \subset  \CMcal{H}_{n}^{\Delta}(C)$ of measure $1-o(1)$ for which
	\[
	\frac1{\big| \CMcal{H}_{n}^{\Delta, +}\big|}\sum_{D\in \widetilde{ \CMcal{H}}_{n}^{\Delta}(C)} \Big|\log\big|L\big(\sigma_0+it, E\otimes \chi_D\big)\big|
	- \log \big|L\big(\tfrac{1}{2}+it, E\otimes \chi_D\big)\big|
	 \Big|=o\left(\sqrt{\log n}\right).
	\]
\end{prop}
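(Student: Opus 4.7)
The plan is to follow the template of Proposition~\ref{difference between shift of logarithm} closely, making the substitutions appropriate to the orthogonal family; the genus $g$ gets replaced by the conductor $m$, the symplectic low lying zeros hypothesis by the Hypothesis on Zeros for Twists of Elliptic Curves, and the average is taken over the arithmetic progression $\mathcal{H}_n^{\Delta}(C)$ but normalized by $|\mathcal{H}_n^{\Delta,+}|$.  I would execute four steps.

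First, I would fix $(C,N_E)=1$ and invoke the Hypothesis on Zeros for Twists of Elliptic Curves with a parameter $y = y(m) \to \infty$ to be chosen, extracting $\widetilde{\mathcal{H}}_n^{\Delta}(C) \subset \mathcal{H}_n^{\Delta}(C)$ of relative measure $1-o(1)$ on which $|\theta_{j,C}(E\otimes\chi_D)| > 1/(ym)$ for every $j$.  Since $t$ is microscopic, $|tm|$ is bounded modulo $4\pi/\log q$, so the shifted phases $\theta_{j,C}+t\log q/(2\pi)$ likewise avoid an interval of length $\asymp 1/(ym)$ about the origin, up to a possible further shrinkage of $\widetilde{\mathcal{H}}_n^{\Delta}(C)$ by an $o(1)$ fraction.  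Using \eqref{cardinality in AP} and \eqref{relation bet. full fam and subfam}, the ratio $|\mathcal{H}_n^{\Delta}(C)|/|\mathcal{H}_n^{\Delta,+}|$ is bounded, so this set still has density $1-o(1)$ with respect to $|\mathcal{H}_n^{\Delta,+}|$.

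Second, starting from the explicit formula \eqref{eq:log der symp zeros}, in which $\eta_{\mathcal F}=0$ for the elliptic curve family, I would integrate $-\operatorname{Re}\tfrac{L'}{L}(\sigma+it,E\otimes\chi_D)$ with respect to $\sigma$ over $[\tfrac12,\sigma_0]$.  This yields an analogue of \eqref{eq:at sigma0 at onehalf}:
\[
\log\bigl|L\bigl(\sigma_0+it,E\otimes\chi_D\bigr)\bigr|-\log\bigl|L\bigl(\tfrac12+it,E\otimes\chi_D\bigr)\bigr| = \log q\,\Bigl(m(\tfrac12-\sigma_0) + \sum_{j=1}^m \operatorname{Re}\!\int_0^{\sigma_0-\frac12}\!\frac{d\sigma}{2(1-\alpha_j q^{-\sigma-it})}\Bigr).
\]
On $\widetilde{\mathcal{H}}_n^{\Delta}(C)$, the separation guarantees $|1-\alpha_j q^{-\sigma-it}|^{-1}\ll ym$ uniformly for $\sigma\in[0,\sigma_0-\tfrac12]$, but a more efficient bound uses \eqref{LprimeL-in primes} with $\mathcal{F}=E\otimes\chi_D$ at $\sigma_0+it$, combined with Lemma~\ref{lemma Altug Tsim} (to transfer the bound on $\operatorname{Re}\tfrac{L'}{L}$ at $\sigma_0$ to $\sigma\in[\tfrac12,\sigma_0]$).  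The result is a pointwise estimate of the same shape as in the symplectic case, namely
\[
\ll \frac{y\bigl(\sigma_0-\tfrac12\bigr)}{X^2}\,\Bigl|\!\sum_{f\in\mathcal{M}_{\leq 3X}}\!\frac{\Lambda_{X,\mathcal{F}}(f)\chi_D(f)}{|f|^{\sigma_0+it}}\Bigr| + my\bigl(\sigma_0-\tfrac12\bigr) + m\bigl(\sigma_0-\tfrac12\bigr)\log q + 1.
\]

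Third, I would calibrate parameters by taking $\sigma_0-\tfrac12=c/X$ with $c<\tfrac{1}{2\log q}$ and choosing $y = y(m)\to\infty$ slowly enough (say $y=(\log n)^{1/4}$) so that $ym/X = o(\sqrt{\log n})$.  Then $my(\sigma_0-\tfrac12) = o(\sqrt{\log n})$ deterministically.  For the Dirichlet polynomial contribution, I would apply Lemma~\ref{moments of lambda polyl2} for $\mathcal{F}=E\otimes\chi_D$ on the family $\mathcal{H}(\mathcal{F})=\mathcal{H}_n^{\Delta}(C)$ with, say, $k=2$, giving that the second moment (and hence the first mean) of the prime sum is $O(1)$.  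Multiplying by $y(\sigma_0-\tfrac12) = yc/X = o(\sqrt{\log n}/m)$ and averaging, a Chebyshev-type argument removes a further $o(1)$-fraction of $D$'s to leave an error of size $o(\sqrt{\log n})$ pointwise on a $1-o(1)$ subset.

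Finally, summing over the residue classes $C$ with $\chi_C(M_E)=\epsilon_n\epsilon(E)$ appearing in \eqref{relation bet. full fam and subfam} and using that there are at most $|(\mathbb{F}_q[t]/N_E)^*|=O_E(1)$ of them, the pointwise bound transfers to all of $\mathcal{H}_n^{\Delta,+}$.  The main technical obstacle, exactly as in the symplectic case, is the balancing act: the low lying zeros hypothesis demands $y\to\infty$, but its cost $ym(\sigma_0-\tfrac12)$ must remain $o(\sqrt{\log n})$.  The hypothesis $n(\sigma_0-\tfrac12)=o(\sqrt{\log n})$ together with the freedom to choose $X$ linearly in $(\sigma_0-\tfrac12)^{-1}$ provides just enough room for both the Dirichlet polynomial moment bound (Lemma~\ref{moments of lambda polyl2}) and the deterministic zero-side estimate to succeed simultaneously.
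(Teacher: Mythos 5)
Your proof follows essentially the same route as the paper's: invoke the Hypothesis on Zeros for Twists of Elliptic Curves to cut down each arithmetic progression $\mathcal{H}_n^{\Delta}(C)$ to a density-$(1-o(1))$ subset on which the eigenphases avoid an interval of length $\asymp 1/(ym)$ about $-t\log q/(2\pi)$, feed this into the explicit formula \eqref{eq:log der symp zeros} with $\eta_{\mathcal{F}}=0$ and the bound \eqref{LprimeL-in primes} to obtain the same pointwise estimate as in the symplectic case, and then choose $\sigma_0-\tfrac12 = c/X$ and $y\to\infty$ slowly so that $ym/X = o(\sqrt{\log n})$, with the Dirichlet-polynomial piece controlled on average by Lemma~\ref{moments of lambda polyl2} and the residue classes $C$ handled via \eqref{relation bet. full fam and subfam} and \eqref{cardinality in AP}. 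The paper's proof is stated tersely (``proceeds analogously to Proposition~\ref{difference between shift of logarithm} ... with the same choice of parameters''), but your fleshed-out version matches it step for step.
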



\begin{proof}
The low lying zeros hypothesis for twists of elliptic curves implies that there is a subset $\widetilde{ \CMcal{H}}_{n}^{\Delta}(C) \subset  \CMcal{H}_{n}^{\Delta}(C)$ of measure $1-o(1)$ so that for all $D\in \widetilde{ \CMcal{H}}_{n}^{\Delta}(C)$ and for all zeros of $\mathcal{L}(u, E\otimes \chi_D)$, the condition
	\[
	\min_{j}\left|\theta_{j, C}(E\otimes \chi_D)+\frac{t \log q}{2 \pi}\right|>\frac{1}{yX}
	\]
	holds for each $(C, N_E)=1$.

	The remainder of the proof proceeds analogously to that of Proposition~\ref{difference between shift of logarithm}, using the same choice of parameters $ \sigma_0$, $X := X(n)$, and $y := y(n)$, together with the relations given in~\eqref{relation bet. full fam and subfam} and~\eqref{cardinality in AP}.
\end{proof}


\subsection{Mesoscopic and Macroscopic Regimes}

Next, we consider mesoscopic and macroscopic $t$. Recall that $\mathcal{F}$ denotes the quadratic character $\chi_D$ or the twisted elliptic curve $E\otimes \chi_D$.


\begin{prop}\label{difference between shift of logarithm2}
	Let $\vec{t}$ lie in either of the mesoscopic or macroscopic regimes. Let $\kappa_\cF$ be either $2g$ or $m$ depending on the family such a way that  $\tfrac{\kappa_\mathcal{F}}X=o(\sqrt{\log(\kappa_\cF)}).$
	 Then we have 
	\[
	\frac{1}{| \CMcal{H}(\mathcal{F})|}\sum_{ D \in  \CMcal{H}(\mathcal{F})} 
	\Big|\log\big| \mathfrak{L}_{\vec{a}, \vec{t}}\big(\sigma_0, \mathcal{F}\big)\big| 
	-\log\big| \mathfrak{L}_{\vec{a}, \vec{t}}\big(\tfrac12, \mathcal{F}\big)\big|\Big|
	=o\big(\sqrt{\log(\kappa_\cF)}\big),
	\]
	and 
	\[
	\frac{1}{| \CMcal{H}(\mathcal{F})|}\sum_{ D \in  \CMcal{H}(\mathcal{F})}
	\Big| \arg \mathfrak{L}_{\vec{a}, \vec{t}}\big(\sigma_0, \mathcal{F}\big) 
	- \arg \mathfrak{L}_{\vec{a}, \vec{t}}\big(\tfrac12, \mathcal{F}\big)\Big|
	=o\big(\sqrt{\log(\kappa_\cF)}\big),
	\]
	where $\mathfrak{L}_{\vec{a}, \vec{t}}\big(\sigma_0, \mathcal{F}\big)$ are defined by  \eqref{def-L-linear combo} and \eqref{def-L-linear comboe} depending on family.
\end{prop}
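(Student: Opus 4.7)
My plan is to mimic the proof of Proposition \ref{difference between shift of logarithm} one shift at a time and then combine the single-shift estimates via the triangle inequality. Fixing $j\in\{1,\ldots,k\}$, I would begin with the exact identity \eqref{eq:at sigma0 at onehalf} (and the analogous formula for the orthogonal family derived from the factorization in Section~\ref{sec:L fnc o}), writing
\[
\log|L(\sigma_0+it_j,\cF)|-\log|L(\tfrac12+it_j,\cF)| = O(1)+\log q\cdot\operatorname{Re}\sum_{k=1}^{\kappa_\cF}\int_0^{\sigma_0-1/2}\frac{1+\alpha_k q^{-\sigma-it_j}}{2(1-\alpha_k q^{-\sigma-it_j})}\,d\sigma - g(\sigma_0-\tfrac12)\log q,
\]
and similarly for the argument, with real parts replaced by imaginary parts.

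To estimate the sum over zeros, I would interchange summation and integration and use \eqref{eq:log der symp zeros} to express $\sum_{k}(1-\alpha_k q^{-\sigma-it_j})^{-1}$ in terms of $\tfrac{L'}{L}(\sigma+\tfrac12+it_j,\cF)$ plus a bounded term. The Dirichlet polynomial approximation \eqref{LprimeL-in primes} then controls $\operatorname{Re}\tfrac{L'}{L}$ pointwise in $\sigma$: the $O(\kappa_\cF)$ error contributes $O(\kappa_\cF(\sigma_0-\tfrac12))=O(\kappa_\cF/X)$ to the integral, which is $o(\sqrt{\log\kappa_\cF})$ by the standing hypothesis $\kappa_\cF/X=o(\sqrt{\log\kappa_\cF})$. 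The Dirichlet polynomial contribution, after integrating and averaging over $D\in\CMcal{H}(\mathcal{F})$, is controlled in mean by Lemma \ref{moments of lambda polyl2} and is negligible thanks to the additional $(\sigma_0-\tfrac12)/X^2=O(1/X^3)$ factor. Crucially, unlike the microscopic setting, no low lying zeros hypothesis is needed here: since $t_j$ is in the mesoscopic or macroscopic regime, it is already well-separated from the bulk of central zeros, and the bound \eqref{LprimeL-in primes} is unconditional, so the single-shift estimate holds without restricting to a high-measure subset of $\CMcal{H}(\mathcal{F})$.

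For the second claim, I would repeat the argument verbatim using \eqref{LprimeL-in primes:im} in place of \eqref{LprimeL-in primes} to bound the imaginary part of the logarithmic derivative; the analogous identity $\arg L(\sigma_0+it_j,\cF)-\arg L(\tfrac12+it_j,\cF)=-\int_{1/2}^{\sigma_0}\operatorname{Im}\tfrac{L'}{L}(\sigma+it_j,\cF)\,d\sigma$ combined with the same moment input yields the required bound. Finally, the bounds for the linear combinations follow by the triangle inequality and the fact that $k$ and $\vec a$ are fixed; the absolute values satisfy $|\log|\mathfrak{L}_{\vec a,\vec t}(\sigma_0,\mathcal{F})|-\log|\mathfrak{L}_{\vec a,\vec t}(\tfrac12,\mathcal{F})||\le\sum_j|a_j|\cdot|\log|L(\sigma_0+it_j,\cF)|-\log|L(\tfrac12+it_j,\cF)||$, with a parallel bound for the argument.

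The main technical obstacle I anticipate is uniform control of $\operatorname{Re}\tfrac{L'}{L}(\sigma+it_j,\cF)$ as $\sigma$ approaches $\tfrac12$ from above: the factor $q^{X(\tfrac12-\sigma)}$ appearing in \eqref{eq:interm 1} degrades the bound there, so \eqref{LprimeL-in primes} cannot be applied verbatim on the full range $[\tfrac12,\sigma_0]$. I would address this by splitting the integration range at $\tfrac12+c/X$; on $[\tfrac12+c/X,\sigma_0]$ the bound \eqref{LprimeL-in primes} applies with $q^{X(\tfrac12-\sigma)}=O(1)$, and the short initial strip $[\tfrac12,\tfrac12+c/X]$ contributes $O(1/X)$ on average because $L(\tfrac12+it_j,\cF)$ is nonzero for almost all $D$ when $t_j$ is not a central-point shift. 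This handling of the edge case is ultimately responsible for the hypothesis $\kappa_\mathcal{F}/X=o(\sqrt{\log\kappa_\mathcal{F})}$ being the governing size constraint.
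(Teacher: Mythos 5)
Your proposal diverges from the paper's proof in a way that creates a genuine gap. The paper's treatment of the real part does \emph{not} integrate $\operatorname{Re}\frac{L'}{L}(\sigma+it)$ over $\sigma\in[\tfrac12,\sigma_0]$; instead it bounds the difference $\big|\log|L(\sigma_0+it,\mathcal F)|-\log|L(\tfrac12+it,\mathcal F)|\big|$ directly from the product over zeros, arriving at $\sum_j \frac{q^{\sigma_0-1/2}+q^{1/2-\sigma_0}-2}{1-\cos(2\pi\theta_j+t\log q)}+\kappa_\cF(\sigma_0-\tfrac12)$. This sum is then matched to $\sigma_0\sum_j\operatorname{Re}\big(\frac{1}{1-\alpha_jq^{1/2-\sigma_0-it}}-\tfrac12\big)$, i.e.\ to $\operatorname{Re}\frac{L'}{L}$ evaluated \emph{only} at the single point $\sigma_0+it$, where \eqref{LprimeL-in primes} is actually proved. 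Your route of expressing $\sum_j(1-\alpha_jq^{-\sigma-it})^{-1}$ as $\frac{L'}{L}(\tfrac12+\sigma+it)$ and applying \eqref{LprimeL-in primes} pointwise for $\sigma\in(\tfrac12,\sigma_0]$ requires bounds at shifts strictly smaller than $\sigma_0-\tfrac12=c/X$, which \eqref{LprimeL-in primes} does not supply: as derived from \eqref{eq:interm 1}, the constant there is $1/(\sigma-\tfrac12)^2$ up to logarithms, and it degenerates as $\sigma\to\tfrac12^+$ because of nearby zeros.

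Your proposed repair is also not viable. Splitting the range at $\tfrac12+c/X$ is vacuous since $\sigma_0=\tfrac12+c/X$ is already the right endpoint, so the ``short initial strip'' $[\tfrac12,\tfrac12+c/X]$ is the entire range of integration. Moreover, nonvanishing of $L(\tfrac12+it_j,\mathcal F)$ does not bound $\frac{L'}{L}$ in a neighborhood of $\tfrac12+it_j$ — zeros can still cluster arbitrarily close — and the claimed $O(1/X)$ contribution would require $\frac{L'}{L}(\sigma+it_j)=O(1)$ uniformly on the strip, which is far from true (it is generically of order $\kappa_\cF$ near the critical circle). For the imaginary part the gap is even more visible: the paper's proof writes $\arg L(\tfrac12+it)-\arg L(\sigma_0+it)=-\int_0^{\sigma_0-1/2}\operatorname{Im}\frac{L'}{L}(\tfrac12+\sigma+it)\,d\sigma$ and then \emph{adds and subtracts} $\frac{L'}{L}(\sigma_0+it)$ inside the integral. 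The difference $\frac{L'}{L}(\sigma_0+it)-\frac{L'}{L}(\tfrac12+\sigma+it)$ is expanded over zeros, and after extracting a factor controlled by $\frac{L'}{L}(\sigma_0+it)$ (again evaluated at the safe point $\sigma_0$), the remaining $\sigma$-integral $\int_0^{\sigma_0-1/2}\frac{|\sin(\cdot)|\,d\sigma}{(q^\sigma-\cos(\cdot))^2+\sin^2(\cdot)}$ is shown to be $O(1)$ by an $\arctan$-type substitution. This subtraction is precisely what makes the argument work without ever bounding $\frac{L'}{L}$ at points $\sigma$ between $\tfrac12$ and $\sigma_0$; your proposal omits it, so the ``analogous'' imaginary-part argument does not go through as stated.
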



\begin{proof} 
	It suffices to establish the result for a single shift 
	$t$; the general case then follows immediately from the triangle inequality. 
	
	Suppose that $\theta_j:=\theta_{j,D}(\mathcal{F}) \neq \pm \tfrac{t\log q}{2\pi}$ for all $1 \leq j \leq \kappa_\cF$, where 
	\begin{align}\label{eigen phases}
	\theta_{j,D}(\mathcal{F}) =
	\begin{cases}
	\theta_{j,D} & \text{if } \mathcal{F} = \chi_D, \\[6pt]
	\theta_{j,C}(E \otimes \chi_D) & \text{if } \mathcal{F} = E \otimes \chi_D.
	\end{cases}
	\end{align}
	 Using this condition and following arguments in the proof of Proposition \ref{difference between shift of logarithm}, we have
	\begin{equation}\label{real differences}
	\begin{split}
	\Big|\log\big| L\big(\sigma_0+it,\mathcal{F}\big)\big| 
	&-\log \big|L\big(\tfrac12+it, \mathcal{F}\big)\big|\Big|
	\ll 	
	\sum_{j=1}^{\kappa_\cF}	\frac{q^{\sigma_0-\frac12}+q^{\frac12-\sigma_0}-2}{1- \cos(2\pi\theta_j+t\log q)}+\kappa_\cF  \big(\sigma_0-\tfrac12\big) \\
	&\ll \sum_{j=1}^{\kappa_\cF}
	\frac{\big(\sigma_0-\frac12\big)\left(1-q^{1-2\sigma_0}\right)}{1-q^{\frac12-\sigma_0}\cos(2\pi\theta_j+t\log q)+q^{1-2\sigma_0}}+\kappa_\cF  \big(\sigma_0-\tfrac12\big)
	\\
	& \ll \sigma_0\sum_{j=1}^{\kappa_\cF}
	\operatorname{Re}\Big( \frac{1}
	{1-\alpha_j q^{\frac12-\sigma_0-it}} -\frac12 \Big)+\kappa_\cF \big(\sigma_0 -\tfrac12\big)
	\\
	 &\ll \frac{\sigma_0-\tfrac12}{X^2}\bigg|\sum_{f \in \CMcal{M}_{\leq 3X}} \frac{\Lambda_{X, \mathcal{F}}(f)\chi_D(f)}{|f|^{\sigma_0+it}}\bigg|+\kappa_\cF  \big(\sigma_0-\tfrac12\big).
	\end{split}
	\end{equation}
	Using Lemma \ref{moments of lambda polyl2} and the fact that $\sigma_0=\tfrac12+\frac{c}{X}$, we conclude that
	\[
	\frac{1}{| \CMcal{H}(\mathcal{F})|}\sum_{ D \in  \CMcal{H}(\mathcal{F})} \Big|\log\big| L\big(\sigma_0+it,\mathcal{F}\big)\big| -\log \big|L\big(\tfrac12+it,\mathcal{F}\big)\big|\Big|\ll_{\mathcal{F}} \kappa_\cF\big(\sigma_0-\tfrac12\big)=o( \sqrt{\log(\kappa_\cF)}),
	\]
	which proves our claim for the real part of the logarithms.
	
	Now we shall prove the second part of the proposition. We have
	\begin{equation}\label{imaginary part estimate}
	\begin{split}
	&\arg L\big(\tfrac12+it,\mathcal{F}\big)-\arg L\big(\sigma_0+it,\mathcal{F}\big) 
	=  -\int_{0}^{\sigma_0-\frac12} 
	\operatorname{Im} \frac{L'}{L}\Big(\frac12+\sigma+it, \mathcal{F}\Big)
	d{\sigma} \\
	&= -\big(\sigma_0-\tfrac12\big) 	\operatorname{Im} \frac{L'}{L}\big(\sigma_0+it, \mathcal{F}\big)+\operatorname{Im} \int_{0}^{\sigma_0-\frac12} \Big( \frac{L'}{L}\big(\sigma_0+it, \mathcal{F}\big)- \frac{L'}{L}\big(\tfrac12+\sigma+it, \mathcal{F}\big) \Big)
	d{\sigma}. 
	\end{split}
	\end{equation}
	From \eqref{LprimeL-in primes:im}, the first term of the above expression can be estimated as
	\[
	\big(\sigma_0-\tfrac12\big) \,	\operatorname{Im} \frac{L'}{L}\big(\sigma_0+it, \mathcal{F}\big)\ll  \frac{\sigma_0-\tfrac12}{X^2}
	\, \bigg|
	\sum_{f \in \CMcal{M}_{\leq 3X}}\frac{\Lambda_{X, \mathcal{F}}(f)\chi_D(f)}{|f|^{\sigma_0+it}}\bigg|+\kappa_\cF\big( \sigma_0-\tfrac12\big).
	\]	
Using \eqref{eq:log der symp zeros}, we write 
	\begin{align*}
	&\operatorname{Im}  \frac{L'}{L}\big(\sigma_0+it, \mathcal{F}\big)
	- \operatorname{Im} \frac{L'}{L}\big(\tfrac12+\sigma+it, \mathcal{F}\big)
	\\
	&= \sum_{j=1}^{\kappa_\cF}\operatorname{Im}\left(\frac{1}{1-\alpha_j q^{\frac12-\sigma_0-it}}-\frac{1}{1-\alpha_j q^{-\sigma-it}}\right)
	+\eta_{\mathcal{F}} \log q \operatorname{Im}\bigg( \frac{q^{-\sigma_0-it}}{1-q^{-\sigma_0-it}}- \frac{q^{-\sigma-1/2-it}}{1-q^{-\sigma-1/2-it}}\bigg)
	\\
	&=\sum_{j=1}^{\kappa_\cF}\frac{\left(q^{-\sigma}-q^{\frac12-\sigma_0}\right)\left(1-q^{\frac12-\sigma-\sigma_0}\right)\sin \left(2\pi \left(\theta_{j}+\frac{t\log q}{2\pi}\right)\right)}{\left(1+q^{-2\sigma}-2q^{-\sigma}\cos \left(2\pi \left(\theta_{j}+\frac{t\log q}{2\pi}\right)\right)\right)\left(1+q^{1-2\sigma_0}-2q^{\frac12-\sigma_0}\cos \left(2\pi \left(\theta_{j}+\frac{t\log q}{2\pi}\right)\right)\right)}
	\\
	&+\eta_{\mathcal{F}} \log q \, \operatorname{Im}\bigg( \frac{q^{-\sigma_0-it}}{1-q^{-\sigma_0-it}}- \frac{q^{-\sigma-1/2-it}}{1-q^{-\sigma-1/2-it}}\bigg) .
	\end{align*}
We need to estimate the following. 
	\[
	\operatorname{Im} \int_0^{\sigma_0-\frac12} 
	\frac{ q^{-\sigma-1/2-it}}
	{1-q^{-\sigma-1/2-it}}d\sigma 
	=\operatorname{Im} \frac{\log(1-q^{-\sigma-1/2-it})}{\log q} \bigg|_0^{\sigma_0-\frac12} .
	\]
Note that for $\sigma\in [0, \sigma_0-\tfrac12]$, we have the bounds 
	\[
	q^{-\sigma}-q^{\frac12-\sigma_0}\leq 1-q^{\frac12-\sigma_0}\ll 1-q^{1-2\sigma_0}
	\quad
	\text{and}
	\quad
	1-q^{\frac12-\sigma-\sigma_0}\ll \sigma_0-\tfrac12.
	\]
	Putting all these together, the second integral in \eqref{imaginary part estimate} is
	\begin{align*}
	& \ll \Big(\sigma_0-\tfrac12\Big) \sum_{j=1}^{\kappa_\cF}
	\frac{1-q^{1-2\sigma_0}}{2(1+q^{1-2\sigma_0}-2q^{\frac12-\sigma_0}\cos(2\pi\theta_{j}+t\log q))}
	\\
	&\quad \times \int_0^{\sigma_0-\frac12}\frac{\left|\sin \left(2\pi \theta_{j}+t\log q\right)\right| d\sigma}{(q^{\sigma}-\cos (2\pi\theta_{j}+t\log q))^2+\sin^2(2\pi\theta_{j}+t\log q))}\\
	&\ll \Big(\sigma_0-\tfrac12\Big)  \, \Big|\frac{L'}{L}\big(\sigma_0+it, \mathcal{F}\big)\Big|
	\ll \Big(\sigma_0-\tfrac12\Big) \bigg(\frac{1}{X^2}\Big|
	\sum_{f \in \CMcal{M}_{\leq 3X}}\frac{\Lambda_{X, \mathcal{F}}(f)\chi_D(f)}{|f|^{\sigma_0+it}}\Big|+\kappa_\cF \bigg), 
	\end{align*}
since the integral above is bounded, and where the last inequality followed from combining \eqref{LprimeL-in primes} and \eqref{LprimeL-in primes:im}. The remaining portion of the proof is identical to that of the real case. Lastly, if $\theta_{j}=\pm \frac{t\log q}{2\pi}$ for some $1\leq j\leq \kappa_\cF$, then we can use the continuity at $\frac12+it$ and the discreteness of the set of all zeros of $L(s, \chi_D)$. Indeed, the estimates \eqref{real differences} and \eqref{imaginary part estimate} are uniform in terms of $t$. So, if we define a function
	\[
	g(t, \mathcal{F})=\log L\big(\sigma_0+it, \mathcal{F}\big) 
	-\log L\big(\tfrac12+it, \mathcal{F}\big),
	\] 
then we can define the value of $g(t, \mathcal{F})$ for $\theta_{j}=\pm \frac{t\log q}{2\pi}$ to be
	\begin{align*}
	\operatorname{Re} g(t, \mathcal{F}) =\lim_{\epsilon \to 0}\frac{\operatorname{Re} g(t+\epsilon, \mathcal{F})+\operatorname{Re} g(t-\epsilon, \mathcal{F})}{2}, \quad 
		\operatorname{Im} g(t, \mathcal{F}) =\lim_{\epsilon \to 0}\frac{\operatorname{Im} g(t+\epsilon, \mathcal{F})+\operatorname{Im} g(t-\epsilon, \mathcal{F})}{2}.
	\end{align*}
This completes the proof.
\end{proof}


\section{Moments of Linear Combinations of Dirichlet Polynomials}\label{moments of lin comb}
Let $\vec{a}=(a_1, \ldots, a_k)\in \mathbb{R}^k$ and $\vec{t}=(t_1, \ldots, t_k)\in \mathbb{R}^k$ be fixed. We set
\begin{align}
\mathcal{P}_X\left(s, \mathcal{F}\right) 
:=\sum_{P\in \CMcal{P}_{\leq X}}
\frac{\Lambda_{\mathcal{F}}(P)\chi_D(P)}{|P|^{s}},
\end{align}
where $\Lambda_{\cF}$ is defined by \eqref{eq:log der symp}.
 Also define 
\begin{align*}
\mathcal{P}_X^{\vec{a}, \vec{t}}\big(s, \mathcal{F}\big)
&:=
a_1 \mathcal{P}_X\big(s+it_1, \mathcal{F}\big)
+\cdots+
a_k	\mathcal{P}_X\big(s+it_k, \mathcal{F}\big).
\end{align*}


\begin{prop}\label{moment computation 1 re}
	Assume that  $1\leq X\le \frac{\kappa_\cF}{2r}$. Uniformly for all  odd natural numbers  $r\leq {\frac{\kappa_\cF}{2}}$ and for every $\epsilon>0$, 
	\begin{align*}
	\frac{1}{| \CMcal{H}(\mathcal{F})|}\sum_{D\in  \CMcal{H}(\mathcal{F})} 
	\Big(\operatorname{Re} \mathcal{P}_X^{\vec{a}, \vec{t}}\big(\sigma_0, \mathcal{F}\big) \Big)^r
	\ll_{\mathcal{F}, \epsilon, \vec{a}} \frac{q^{-n/2+\left(1+\epsilon-\sigma_0\right)Xr}}{X^r} \left(2(|a_1|+\cdots+|a_k|)\right)^r.
	\end{align*}
Uniformly for all even natural numbers  $r\ll (\log (\kappa_{\mathcal{F}}))^{\frac{1}{3}}$, 
	\begin{align*}
	&\frac{1}{|\CMcal{H}(\mathcal{F})|}\sum_{D\in  \CMcal{H}(\mathcal{F})} \Big(\operatorname{Re} \mathcal{P}_X^{\vec{a}, \vec{t}}\big(\sigma_0, \mathcal{F}\big) \Big)^r
	=\frac{r!}{(\frac{r}2)! 2^{\frac{r}{2}}}
	\Big(\mathcal{V}_{\operatorname{Re}}(\vec{a}, \vec{t}, X)\Big)^{\frac{r}2}\bigg(1+O\Big(\frac{r^3}{\log X}\Big)\bigg),
	\end{align*}
where $\mathcal{V}_{\operatorname{Re}}(\vec{a}, \vec{t}, X)$ is defined by \eqref{variance for real}.	
\end{prop}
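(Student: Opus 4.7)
The plan is to expand $\bigl(\operatorname{Re}\mathcal{P}_X^{\vec{a},\vec{t}}(\sigma_0,\mathcal{F})\bigr)^r$ and apply the orthogonality of quadratic characters across the family $\CMcal{H}(\mathcal{F})$. Writing
\[
\operatorname{Re}\mathcal{P}_X^{\vec{a},\vec{t}}(\sigma_0,\mathcal{F})
= \sum_{P\in \CMcal{P}_{\leq X}} \frac{b(P)\,\chi_D(P)}{|P|^{\sigma_0}},
\qquad b(P):=\Lambda_{\mathcal{F}}(P)\sum_{j=1}^k a_j\cos\bigl(t_j d(P)\log q\bigr),
\]
(with $|b(P)|\ll \sum_j|a_j|$, using $|\lambda(P)|\leq 2$ in the orthogonal case), raising to the $r$-th power and averaging over $D$ produces a multiple sum over $r$-tuples $(P_1,\ldots,P_r)$ weighted by $\prod_i b(P_i)/|P_i|^{\sigma_0}$ together with a character factor $\frac{1}{|\CMcal{H}(\mathcal{F})|}\sum_D\chi_D(P_1\cdots P_r)$. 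The entire analysis then hinges on whether $P_1\cdots P_r$ is a perfect square in $\mathbb{F}_q[t]$.

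When $r$ is odd the product is never a perfect square (since some prime must occur with odd multiplicity), so the character average is bounded via Lemma~\ref{polya-vinogradov inequality} or Lemma~\ref{sum over non-squares} by $O\bigl(q^{-n/2+\epsilon n}|P_1\cdots P_r|^{\epsilon}\bigr)$. Combined with the prime polynomial theorem estimate $\sum_{P\in\CMcal{P}_{\leq X}}|P|^{\epsilon-\sigma_0}\ll q^{X(1-\sigma_0+\epsilon)}/X$ and the pointwise bound $|b(P_i)|\ll\sum_j|a_j|$, taking an $r$-th power yields the claimed odd-moment bound $q^{-n/2+(1+\epsilon-\sigma_0)Xr}(2\sum_j|a_j|)^r/X^r$. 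The same calculation controls the non-square contribution in the even case and renders it negligible under the hypothesis $X\leq \kappa_{\mathcal{F}}/(2r)$, which is precisely what ensures the total degree $rX$ stays below the threshold where the Pólya--Vinogradov-type savings beat the $q^{-n/2}$ factor.

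For even $r$ the dominant contribution comes from tuples whose product is a perfect square, for which Lemma~\ref{square term evaluation} (or Lemma~\ref{sum over squares}) delivers the factor $\prod_{P\mid P_1\cdots P_r}(1+|P|^{-1})^{-1}$ on the character average. Among these, I would isolate the \emph{diagonal} configurations in which the multiset $\{P_1,\ldots,P_r\}$ consists of $r/2$ distinct primes each appearing exactly twice; such configurations are counted by the Wick number $r!/\bigl((r/2)!\,2^{r/2}\bigr)$, and for each fixed pairing the inner sum collapses to $\bigl(\sum_{P\in\CMcal{P}_{\leq X}} b(P)^2(1+|P|^{-1})^{-1}|P|^{-2\sigma_0}\bigr)^{r/2}$. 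Expanding $b(P)^2$ via $\cos A\cos B=\tfrac12(\cos(A-B)+\cos(A+B))$, applying the prime polynomial theorem, and invoking Lemma~\ref{logarithmic cosine sum} on each resulting cosine sum identifies this inner sum with $\mathcal{V}_{\operatorname{Re}}(\vec{a},\vec{t},X)+O(1)$, which raised to the $r/2$ power produces the stated main term. The main obstacle is controlling the \emph{non-diagonal} square configurations --- those in which some prime appears four or more times --- where each merged pair trades a factor of size $\log X$ for one of size $O(1)$ (since $\sum_P |P|^{-4\sigma_0}=O(1)$); counting the $O(r^3)$ ways to insert such a cluster into the $r$-tuple then gives a total saving of $O(r^3/\log X)$ relative to the main term, which matches the claimed relative error.
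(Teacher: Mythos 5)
Your proposal follows essentially the same route as the paper: expand the $r$-th power, average over $D$ to reduce to $\frac{1}{|\CMcal{H}(\mathcal{F})|}\sum_D\chi_D(P_1\cdots P_r)$, split according to whether $P_1\cdots P_r$ is a square, handle non-square tuples (all of odd $r$, part of even $r$) via the P\'olya--Vinogradov bound and the prime polynomial theorem, extract the Wick-count $\frac{r!}{(r/2)!2^{r/2}}$ from the strictly-paired diagonal configurations, evaluate the diagonal sum with $\cos A\cos B=\tfrac12(\cos(A-B)+\cos(A+B))$ and Lemma~\ref{logarithmic cosine sum} to recover $\mathcal{V}_{\operatorname{Re}}(\vec a,\vec t,X)$, and control square configurations with a repeated prime to get the $O(r^3/\log X)$ relative error. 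This is precisely the paper's three-case analysis (their cases (1a), (1b), (2), (3)), with the same lemmas at each step, so the proof matches.
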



\begin{proof}
	We introduce the following notations.
	\[
	S_r
	: = \frac1{|\CMcal{H}(\mathcal{F})|} \sum_{D\in  \CMcal{H}(\mathcal{F})} \Big(\operatorname{Re} \mathcal{P}_X^{\vec{a}, \vec{t}}\big(\sigma_0, \mathcal{F}\big)\Big)^r 
	= 
	\sum_{\substack{P_j\in\CMcal{P}_{\leq X}\\ j=1, \ldots, r}}\frac{b_\mathcal{F}(P_1) \dots b_\mathcal{F}(P_r)}{|P_1\ldots P_r|^{\sigma_0}}s_r(P_1,\dots, P_r) ,
	\]
	where we set
	\[
	s_r(P_1,\dots, P_r) :=\frac1{| \CMcal{H}(\mathcal{F})|}
	\sum_{D\in  \CMcal{H}(\mathcal{F})}\chi_D(P_1\cdots P_r),
	\]
	and 
	\[
	b_\mathcal{F}(P_j)= \lambda_\mathcal{F}(P_j)\Big(a_1\cos(t_1\log|P_j|)+\cdots+a_k\cos(t_k\log|P_j|)\Big) \quad \text{for} \quad j=1, 2, \dots, r.
	\]	
	Note that the character sum $s_r(P_1,\dots, P_r)$ has a different size depending on whether or not $P_1 \cdots P_r=\square$.
	Therefore, we consider three cases.
	\begin{enumerate}
		\item[$(1)$] $r$ is even and $P_1\cdots P_r = \square$,
		\item[$(2)$] $r$ is even and $P_1\cdots P_r \neq \square$,
		\item[$(3)$] $r$ is odd.
	\end{enumerate}
	
	\medskip
	
	\noindent
	\textbf{Case (1).} \label{even and square case:re}
	Suppose that $r$ is even and $P_1 \cdots P_r = \square$.  
	
	\begin{enumerate}
		\item[$(a)$] Assume that there are exactly $\tfrac{r}{2}$ distinct primes called 
		$Q_1, \ldots, Q_{r/2}$ such that 
		$
		P_1\cdots P_r = (Q_1\cdots Q_{r/2})^2,
		$
		with 
		\[
		d(Q_1) \leq d(Q_2) \leq \cdots \leq d(Q_{r/2}).
		\]
	By Lemma \ref{square term evaluation} and Lemma \ref{sum over squares} depending on the family, 
			\[
			s_r(P_1,\dots, P_r) := \prod_{j=1}^{r/2} \frac{|Q_j|}{|Q_j|+1} + \mathcal{E}_{\mathcal{F}},
			\]    
			where $\mathcal{E}_{\mathcal{F}}$ is defined as in \eqref{error for square over familes}. By using the bounds $|b_\mathcal{F}(P_j)|\leq 2(|a_1|+\cdots+|a_k|)$ and $|\lambda_\mathcal{F}(P)|\leq 2$, and Lemma \ref{upper bound for truncated sum over primes }, we find that the contribution of such primes is 
\begin{equation}\label{eq:s_n case 1}
\begin{split}
& \frac{r!}{(\frac{r}2)! 2^{\frac{r}2}}
\sum_{\substack{Q_j\in\CMcal{P}_{\leq X}\\ j=1, \ldots, \frac{r}2}}\prod_{j=1}^{\frac{r}2}
\frac{b^2_\mathcal{F}(Q_j)}{(|Q_j|+1)|Q_j|^{2\sigma_0-1}} 
+ O\bigg(\mathcal{E}_{\mathcal{F}}\frac{r!\big(2(|a_1|+\cdots+|a_k|)\big)^{r}}{(\frac{r}2)! 2^{\frac{r}{2}}} \sum_{\substack{Q_j\in\CMcal{P}_{\leq X}\\ j=1, \ldots, \frac{r}2}}\frac{1}{|Q_1\cdots Q_{\frac{r}2}|^{2\sigma_0}}\bigg)    \\
&=\frac{r!}{(\frac{r}2)! 2^{\frac{r}2}}\sum_{\substack{Q_j\in\CMcal{P}_{\leq X}\\ j=1, \ldots, \frac{r}2}}\prod_{j=1}^{\frac{r}2}\frac{{b_\mathcal{F}}^2(Q_j)}{(|Q_j|+1)|Q_j|^{2\sigma_0-1}} 
+O\bigg(\mathcal{E}_{\mathcal{F}} \frac{r!\big(2(|a_1|+\cdots+|a_k|)\big)^{r}}{(\frac{r}{2})! 2^{\frac{r}2}}(\log X)^{\frac{r}{2}}\bigg),			
\end{split} 
\end{equation}
			Now, we focus on the sum in the main term of the last line above. If we fix $Q_1, \ldots, Q_{\frac{r}{2}-1}$ and vary over $Q_{\frac{r}2}$, then this sum can be seen to be
			\begin{align}\label{prime sum for cosine}
			\sum_{Q_{\frac{r}2}\in\CMcal{P}_{\leq X}}\frac{b_{\mathcal{F}}^2(Q_{\frac{r}2})}{(|Q_{\frac{r}2}|+1)|Q_{\frac{r}2}|^{2\sigma_0-1}}+O_k\Big(\frac{r}2\Big).
			\end{align}
Note that
			\begin{align*}
			b_{\mathcal{F}}^2(Q_{\frac{r}2})=
			\lambda_{\mathcal{F}}^2(Q_{\frac{r}2})\Bigg(\sum_{j=1}^ka_j^2\cos^2\big(t_j\log|Q_{\frac{r}2}|\big)
			+2\sum_{\substack{i, j=1\\i<j}}^k a_ja_j\cos\big(t_i\log|Q_{\frac{r}2}|\big)\cos\big(t_j\log|Q_{\frac{r}2}|\big)\Bigg),
			\end{align*}
and we have the identity
			\[
			2\cos\big(t_i\log|Q_{\frac{r}2}|\big)\cos\big(t_j\log|Q_{\frac{r}2}|\big)
			=\cos\big((t_i+t_j)\log|Q_{\frac{r}2}|\big)
			+
			\cos\big((t_i-t_j)\log|Q_{\frac{r}2}|\big).
			\]
By the prime number theorem, we have
			\[
			\sum_{d(P)\leq X}\frac{\cos^2(td(P))}{|P|}=  \sum_{n\leq X}\frac{\cos^2 (nt)}{n}
			+O\bigg(\sum_{n\le X}\frac{\cos^2(nt)}{nq^{n/2}}\bigg)
			=\frac12\sum_{n\leq X}\frac{1}{n}+\frac12\sum_{n\leq X}\frac{\cos(2nt)}{n}+O(1).
			\]
For the orthogonal family, from \eqref{lambda on primes} and prime polynomial theorem, we will use the following estimate instead.
			\[
			\sum_{d(P)\leq X}\frac{\lambda(P)^2\cos(td(P))}{|P|}=\sum_{n\leq X}q^{-n}\cos(nt)\sum_{d(P)=n}\lambda(P)^2=\sum_{n\leq X}\frac{\cos(nt)}{n}+O(1).
			\]
Then by using Lemma \ref{logarithmic cosine sum}, we see that the sum in \eqref{eq:s_n case 1} is
\begin{align*}
&
\bigg(\frac{a_1^2+\cdots+a_k^2}{2}\log X
+ \frac12\sum_{j=1}^k a_j^2 \log\Big(\min\Big\{ X, \frac{1}{2|t_j|}\Big\}\Big)
			\\
& \quad +\sum_{\substack{i,j=1\\i<j}}^k a_i a_j \left(\log\Big(\min\Big\{ X, \frac{1}{|t_i+t_j|}\Big\}\Big)
+\log\Big(\min\Big\{ X, \frac{1}{|t_i-t_j|}\Big\}\Big)\right)
+O(r) \bigg)^{\frac{r}{2}}, 
\end{align*}
which is, as per our definition earlier, denoting $\mathcal{V}:=\mathcal{V}_{\operatorname{Re}}(\vec{a}, \vec{t}, X)$      
			\[
\mathcal{V} ^{\frac{k}{2}}\left(1+O\bigg(\frac{r}{\mathcal{V}}\bigg)\right)^{\frac{r}{2}}.
			\]
		\item[$(b)$] Let there be $Q_1, \ldots, Q_\ell$ distinct primes with $\ell< \frac{r}2$ and  
			\[
			d(Q_1)\leq \ldots \leq d(Q_\ell) \quad \text{and} \quad P_1\cdots P_r=(Q_1\cdots Q_{r/2})^2.
			\]
Using Lemmas \ref{upper bound for truncated sum over primes }, \ref{square term evaluation}, and \ref{sum over squares}, we obtain
\begin{align*}
S_r = \, &  \sum_{\ell<\frac{r}{2}}\frac{r!}{\ell!2^\ell}\binom{r-\ell}{\ell}
\bigg(\sum_{P\in\CMcal{P}_{\leq X}}\frac{\lambda_{\mathcal{F}}^2(P)\big(a_1 \cos(t_1 \log |P|)+\cdots+ a_k \cos(t_k \log |P|)\big)^2}{(|P|+1)|P|^{2\sigma_0-1}}\bigg)^r\\
&+O\bigg(\mathcal{E}_{\mathcal{F}}\sum_{\ell<\frac{r}{2}}\frac{r!}{\ell!2^\ell}\binom{r-\ell}{\ell}
\mathcal{V}^\ell\bigg) \leq \sum_{\ell<\frac{r}{2}}\frac{r!}{\ell!2^\ell}\binom{r-\ell}{\ell}\big(|\mathcal{V}|+O(1)\big)^\ell
+O\bigg(\mathcal{E}_{\mathcal{F}}\sum_{\ell<\frac{r}{2}}\frac{r!}{\ell!2^\ell}\binom{r-\ell}{\ell}
\mathcal{V}^\ell\bigg)\\
&\ll_{ \vec{a}, k} \frac{r^3 r!}{(\frac{r}2)!2^{\frac{r}2}\log X}
\big|\mathcal{V}\big|^{\frac{r}2}
+O\bigg(\mathcal{E}_{\mathcal{F}}\frac{r^3 r!}{(\frac{r}2)!2^{\frac{r}2}} 
\mathcal{V}^{\frac{r}{2}}\bigg).
\end{align*}
\end{enumerate}
	
	\medskip
	
	\noindent
	\textbf{Case (2).}\label{even and nonsquare case:re}
	Let $r$ be even and suppose that $P_1\ldots P_r\neq \square$. Then we use Lemma \ref{polya-vinogradov inequality} for the symplectic family to get
		\[
		S_r
		\ll q^{(-1/2+\epsilon)n} \sum_{\substack{P_j\in\CMcal{P}_{\leq X}\\ j=1, \ldots, r}}\frac{\left(|a_1|+\cdots +|a_k|\right)^r}{|P_1\ldots P_r|^{\sigma_0-\epsilon}}
		\ll_{\vec{a}}  \frac{q^{-\frac{n}{2}+\left(1-\sigma_0+\epsilon \right) Xr}}{X^r}\left(|a_1|+\cdots+|a_k|\right)^r.
		\]
 For the orthogonal family, by using Lemma \ref{sum over non-squares} and \eqref{cardinality in AP}, we obtain
	\[
	S_r
	\ll_E q^{(-\frac12+\epsilon)n} \sum_{\substack{P_j\in\CMcal{P}_{\leq X}\\ j=1, \ldots, r}}\frac{\left(2(|a_1|+\cdots+|a_k|)\right)^r}{|P_1\ldots P_r|^{\sigma_0-\epsilon}}
	\ll_{E, \vec{a}}  \frac{q^{-\frac{n}{2}+\left(1-\sigma_0 +\epsilon\right) Xr}}{X^r}\left(2(|a_1|+\cdots+|a_k|)\right)^r.
	\] 
	The above two cases prove the second part of the proposition for even integers.

\noindent
\textbf{Case (3).} Let $r$ be odd. This implies that  $P_1\ldots P_r\neq \square$.
		Similarly to the above case, we obtain
		\[
		S_r
		\ll \frac{q^{-n/2+\left(1-\sigma_0+\epsilon \right) Xr}}{X^r}
		\big(2(|a_1|+\cdots+|a_k|)\big)^r.
		\]
\end{proof}


\begin{prop}\label{moment computation 1 im}
Assume that  $1\leq X\le \frac{\kappa_\cF}{2r}$ for a positive integer $r$. Then for $\vec{t}=(t_1, \ldots, t_k)\in \mathbb R^k$, uniformly for all odd natural numbers $r\leq {\frac{\kappa_\cF}{2}}$ and for every $\epsilon>0$, 
	\begin{align*}
	\frac{1}{| \CMcal{H}(\mathcal{F})|}\sum_{D\in  \CMcal{H}(\mathcal{F})} 
	\Big(\operatorname{Im} \mathcal{P}_X^{\vec{a}, \vec{t}}\big(\sigma_0, \mathcal{F}\big) \Big)^r
	\ll_{\mathcal{F}, \epsilon, \vec{a}} \frac{q^{-n/2+\left(1-\sigma_0+\epsilon\right)Xr}}{X^r} \left(2(|a_1|+\cdots+|a_k|)\right)^r.
	\end{align*}
	If $\vec{t}$ falls into either of the mesoscopic or macroscopic regimes, then uniformly for all even natural numbers $r\ll \log^{\frac13}(\kappa_\cF)$, 
	\begin{align*}
	&\frac{1}{| \CMcal{H}(\mathcal{F})|}\sum_{D\in  \CMcal{H}(\mathcal{F})} 
	\Big(\operatorname{Im} \mathcal{P}_X^{\vec{a}, \vec{t}}\big(\sigma_0, \mathcal{F}\big) \Big)^r
	=\frac{r!}{(\frac{r}2)! 2^{\frac{r}{2}}}
	\Big(\mathcal{V}_{\operatorname{Im}}(\vec{a}, \vec{t}, X)\Big)^{\frac{r}2}\Big(1+O\Big(\frac{r^3}{\log X}\Big)\Big),
	\end{align*}
	where $\mathcal{V}_{\operatorname{Im}}(\vec{a}, \vec{t}, X)$ is defined by \eqref{variance for imaginary}.
\end{prop}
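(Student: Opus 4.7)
The plan is to mirror the proof of Proposition~\ref{moment computation 1 re}, with every occurrence of $\cos(t_j \log|P|)$ replaced by its sine analogue and the product-to-sum identities adjusted accordingly. Writing
\[
\operatorname{Im}\mathcal{P}_X^{\vec{a},\vec{t}}\big(\sigma_0, \mathcal{F}\big) = -\sum_{P\in\CMcal{P}_{\leq X}}\frac{\widetilde b_\mathcal{F}(P)\chi_D(P)}{|P|^{\sigma_0}}, \qquad \widetilde b_\mathcal{F}(P) := \lambda_\mathcal{F}(P)\sum_{j=1}^k a_j \sin(t_j \log|P|),
\]
I would expand the $r$-th power and split the resulting $r$-fold sum over primes into the same three cases used there: $r$ even with $P_1\cdots P_r=\square$, $r$ even with $P_1\cdots P_r\neq\square$, and $r$ odd.

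For the odd case and the nonsquare subcase of even $r$, the argument is essentially identical to its counterpart for the real part: Lemma~\ref{polya-vinogradov inequality} (or Lemma~\ref{sum over non-squares} for the orthogonal family) supplies the $q^{(-1/2+\epsilon)n}$ savings in the character sum, while the trivial estimate $|\widetilde b_\mathcal{F}(P)| \leq 2(|a_1|+\cdots+|a_k|)$ plugs in unchanged, giving the first stated bound. In the square subcase with fewer than $r/2$ distinct primes, Lemma~\ref{upper bound for truncated sum over primes } contributes $O(r^3/\log X)$ relative to the main term, exactly as before. The principal subcase, with exactly $r/2$ distinct primes, together with Lemma~\ref{square term evaluation} or~\ref{sum over squares}, then produces the leading contribution
\[
\frac{r!}{(r/2)!\,2^{r/2}}\bigg(\sum_{P\in\CMcal{P}_{\leq X}}\frac{\lambda_\mathcal{F}^2(P)\big(\sum_{j=1}^k a_j \sin(t_j\log|P|)\big)^2}{(|P|+1)|P|^{2\sigma_0-1}}\bigg)^{r/2}.
\]
To evaluate the inner sum I would apply the identities $\sin^2\theta = \tfrac12(1-\cos 2\theta)$ and $2\sin\alpha\sin\beta = \cos(\alpha-\beta) - \cos(\alpha+\beta)$, invoke Lemma~\ref{logarithmic cosine sum} and~\eqref{lambda on primes} (in the orthogonal case), and observe that the signs generated by these sine identities are exactly opposite to those appearing in the real-part computation. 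This converts $\mathcal{V}_{\operatorname{Re}}$ into $\mathcal{V}_{\operatorname{Im}}$ as defined in~\eqref{variance for imaginary}, up to an additive $O(r)$ in the base.

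The main obstacle is ensuring that $\mathcal{V}_{\operatorname{Im}}(\vec{a},\vec{t},X)\to\infty$ fast enough to absorb the $O(r)$ into a $(1+O(r^3/\log X))$ factor, and this is precisely where the hypothesis that $\vec{t}$ lies in the mesoscopic or macroscopic regime is essential. In the microscopic regime one has $\min\{X,1/(2|t_j|)\}=X$ for every $j$, so the first two diagonal terms of~\eqref{variance for imaginary} cancel and only the bounded differences $\log\min\{X,1/|t_{j_1}-t_{j_2}|\}-\log\min\{X,1/|t_{j_1}+t_{j_2}|\}$ remain; the variance can then stay bounded and the Gaussian moment identity collapses. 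In the mesoscopic and macroscopic regimes, by contrast, $|t_j|X\to\infty$ forces $\min\{X,1/(2|t_j|)\}=1/(2|t_j|)$, so the diagonal grows like $\tfrac12\big(\sum_j a_j^2\big)\log(|t_j|X)$ and dominates the errors, allowing the Gaussian moment asymptotic to close exactly as in the real case.
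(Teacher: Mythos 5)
Your proof is correct and follows essentially the same route as the paper: reuse the three-case decomposition from the real-part computation, apply $\sin^2\theta=\tfrac12(1-\cos 2\theta)$ and the product-to-sum identity to convert the sine sums into cosine sums handled by Lemma~\ref{logarithmic cosine sum}, and note the sign flip that turns $\mathcal{V}_{\operatorname{Re}}$ into $\mathcal{V}_{\operatorname{Im}}$. One small remark: you state the product-to-sum identity in its correct form, $2\sin\alpha\sin\beta=\cos(\alpha-\beta)-\cos(\alpha+\beta)$, whereas the paper's displayed identity has the two cosines transposed (a typo, as the subsequent formula for $J(\mathcal{F})$ is consistent with the correct identity and with the sign in \eqref{variance for imaginary}); your version is the right one. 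Your explanation of why the mesoscopic/macroscopic restriction is needed for the even-moment asymptotic — namely that $\mathcal{V}_{\operatorname{Im}}$ would otherwise be $O(1)$ and the additive $O(r)$ from the square case could not be absorbed into the $(1+O(r^3/\log X))$ factor — is also the correct reason, though the paper leaves it implicit.
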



\begin{proof}
	Similarly to the argument in the proof of Proposition \ref{moment computation 1 re}, it suffices to analyze the behaviour of the following sum, compared to the sum in \eqref{prime sum for cosine}. For
	\begin{align*}
	c_{\mathcal{F}}^2(Q_{\frac{r}2}):=
	\lambda^2_{\mathcal{F}}(Q_{\frac{r}2})
	\bigg(\sum_{j=1}^{k}a_j^2\sin^2(t_j\log|Q_{\frac{r}2}|)+ 2\sum_{\substack{j_1, j_2=1\\j_1<j_2}}^k a_{j_1}a_{j_2}\sin(t_{j_1}\log|Q_{\frac{r}2}|)\sin(t_{j_2}\log|Q_{\frac{r}2}|)\bigg), 
	\end{align*}
let
	$
	J(\mathcal{F}):=\sum_{Q_{\frac{r}2}\in\CMcal{P}_{\leq X}}
	\frac{c_{\mathcal{F}}^2(Q_{\frac{r}2})}{(|Q_{\frac{r}2}|+1)|Q_{\frac{r}2}|^{2\sigma_0-1}}.
	$
Note the identity
	\[
	2\sin\big(t_{j_1}\log|Q_{\frac{r}2}|\big)\sin\big(t_{j_2}\log|Q_{\frac{r}2}|\big)
	=\cos\big((t_{j_1}+t_{j_2})\log|Q_{\frac{r}2}|\big)
	-
	\cos\big((t_{j_1}-t_{j_2})\log|Q_{\frac{r}2}|\big).
	\]
Using Lemma \ref{logarithmic cosine sum}, and employing a similar approach as in the proof of Proposition \ref{moment computation 1 re}, we obtain
	\begin{align*}
	J(\mathcal{F})&=\sum_{j=1}^k\frac{a_j^2}{2}\left(\log X-\log\Big(\min\Big\{X, \frac{1}{2|t_j|}\Big\}\Big)\right)\\
	& +\sum_{\substack{j_1, j_2=1\\j_1<j_2}}^k a_{j_1} a_{j_2} \left(\log \Big(\min\Big\{X, \frac{1}{|t_{j_1}+t_{j_2}|}\Big\}\Big)-\log \Big(\min\Big\{X, \frac{1}{|t_{j_1}-t_{j_2}|}\Big\}\Big)\right)+O(1)\\
	&=\mathcal{V}_{\operatorname{Im}}(\vec{a}, \vec{t}, X)+O(1).
	\end{align*}
\end{proof}
For the rest of our discussion, we define another Dirichlet polynomial by  
\begin{align*}
\mathcal{D}_X^{\vec{a}, \vec{t}}\big(s, \mathcal{F}\big)
&:=
a_1 \mathcal{D}_X\big(s+it_1, \mathcal{F}\big)
+\cdots+
a_k	\mathcal{D}_X\big(s+it_k, \mathcal{F}\big) \\
&	=a_1 \sum_{f\in  \CMcal{M}_{\leq X}}
\frac{\Lambda_{\mathcal{F}}(f)\chi_D(f)}{|f|^{s+it_1}}
+\cdots+ a_k \sum_{f\in  \CMcal{M}_{\leq X}}
\frac{\Lambda_{\mathcal{F}}(f)\chi_D(f)}{|f|^{s+it_k}}.
\end{align*}


\begin{prop}\label{moment computation 2}
	Let $1\leq X\le \frac{\kappa_\cF}{2r}$.  Uniformly for all odd natural numbers  $r\ll \tfrac{(\log X)^{1/3}}{(\log \log(\kappa_\cF))^{2/3}}$, 
	\begin{align*}
	&	\frac{1}{| \CMcal{H}(\mathcal{F})|}\sum_{D\in  \CMcal{H}(\mathcal{F})}
	\Big(\operatorname{Re}\mathcal{D}_X^{\vec{a}, \vec{t}}\big(\sigma_0, \mathcal{F}\big)
	- \epsilon_{\mathcal{F}}\mathcal{M}(\vec{a}, \vec{t}, X)\Big)^r
	\ll \frac{r!\, r^{\frac{3}{2}}(\log X)^{\frac{r}{2}}}{(\frac{r}2)!\, 2^{\frac{r}2}}\frac{(\log \log( \kappa_\cF))^{2}}{\sqrt{\log X}}.
	\end{align*}
The same result holds for the imaginary part with zero mean.

	Uniformly for all even natural numbers $r\ll \tfrac{(\log X)^{1/3}}{(\log \log(\kappa_\cF))^{2/3}}$, 
	\begin{align*}
	\frac{1}{| \CMcal{H}(\mathcal{F})|}
&\sum_{D\in  \CMcal{H}(\mathcal{F})}\Big(
	\operatorname{Re} \mathcal{D}_X^{\vec{a}, \vec{t}}\big(\sigma_0, \mathcal{F}\big)- \epsilon_{\mathcal{F}}\mathcal{M}(\vec{a}, \vec{t}, X) \Big)^r
	\\
	&=\frac{r!}{(\frac{r}2)!\, 2^{\frac{r}{2}}} \Big(\mathcal{V}_{\operatorname{Re}}(\vec{a}, \vec{t}, X)\Big)^{\frac{r}{2}}
	\bigg(1+O\Big(\frac{r^3}{\log X}\Big)+O\Big(\frac{r^{\frac{3}{2}}(\log \log(\kappa_\cF))^{2}}{\sqrt{\log X}}\Big)\bigg) ,
	\end{align*}
	If $\vec{t}$ falls into mesoscopic or macrascopic regime then uniformly for all even $r\ll \tfrac{(\log X)^{1/3}}{(\log \log(\kappa_\cF))^{2/3}}$, 
	\begin{align*}
	&\frac{1}{| \CMcal{H}(\mathcal{F})|}\sum_{D\in \CMcal{H}(\mathcal{F})} \Big(\operatorname{Im} \mathcal{D}_X^{\vec{a}, \vec{t}}\big(\sigma_0, \mathcal{F}\big)\Big)^r
	=\frac{r!}{(\frac{r}2)! 2^{\frac{r}2}} \Big(\mathcal{V}_{\operatorname{Im}} (\vec{a}, \vec{t}, X)\Big)^{\frac{r}2}\left(1+O\Big(\frac{r^3}{\log X}\Big)\right).
	\end{align*}
Here $\mathcal{M}(\vec{a}, \vec{t}, X)$, $ \mathcal{V}_{\operatorname{Re}}(\vec{a}, \vec{t}, X)$, $\mathcal{V}_{\operatorname{Im}}(\vec{a}, \vec{t}, X)$, and  $\epsilon_{\mathcal{F}}$ are defined by \eqref{mean for real}, \eqref{variance for real}, \eqref{variance for imaginary} and \eqref{sign of MV} respectively.
\end{prop}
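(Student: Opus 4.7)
The plan is to reduce the moments of $\mathcal{D}_X^{\vec{a}, \vec{t}} - \epsilon_\mathcal{F} \mathcal{M}$ to those of $\mathcal{P}_X^{\vec{a}, \vec{t}}$ already established in Propositions \ref{moment computation 1 re} and \ref{moment computation 1 im}. First, I would decompose $\mathcal{D}_X^{\vec{a}, \vec{t}}(\sigma_0, \mathcal{F})$ according to whether the index $f$ is an irreducible or a higher prime power. The prime part coincides with $\mathcal{P}_X^{\vec{a}, \vec{t}}$ up to the normalization built into $\Lambda_\mathcal{F}$; the contributions from $f = P^k$ with $k \geq 2$ are essentially deterministic and produce the mean $\epsilon_\mathcal{F} \mathcal{M}(\vec{a}, \vec{t}, X)$.

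For the $k = 2$ contribution one uses $\chi_D(P)^2 = \mathbf{1}_{P \nmid D}$. Isolating the deterministic ``$1$'' and applying the prime polynomial theorem for the symplectic family (where $\lambda_\mathcal{F}(P^2) = 1$), or the Rankin--Selberg estimate \eqref{lambda on primes} for the orthogonal family (where $\lambda_\mathcal{F}(P^2) = \lambda(P)^2 - 2$), together with Lemma \ref{logarithmic cosine sum}, yields
\[
\operatorname{Re} \sum_{2 d(P) \leq X} \frac{\lambda_\mathcal{F}(P^2) \cos(2 t_j d(P) \log q)}{2 d(P) |P|^{2\sigma_0}} = \frac{\epsilon_\mathcal{F}}{2} \log\min\Big\{X, \tfrac{1}{2|t_j|}\Big\} + O(1),
\]
with $\epsilon_\mathcal{F} = +1$ in the symplectic case and $\epsilon_\mathcal{F} = -1$ in the orthogonal case. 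Summing against $a_j$ reproduces exactly $\epsilon_\mathcal{F} \mathcal{M}(\vec{a}, \vec{t}, X)$. The indicator ``$\mathbf{1}_{P \mid D}$'' gives a $D$-dependent error controlled by $\sum_{P \mid D} |P|^{-2\sigma_0}$, and contributions from $k \geq 3$ are absolutely bounded since $\sum_P |P|^{-k\sigma_0}$ converges. Together these collect into a pointwise remainder $R(D)$ with $\sup_D |R(D)| \ll (\log \log \kappa_\mathcal{F})^2$. For the imaginary part no centering is needed, since the sine analog in Lemma \ref{logarithmic cosine sum} is $O(1)$.

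With the pointwise decomposition
\[
\operatorname{Re} \mathcal{D}_X^{\vec{a}, \vec{t}}(\sigma_0, \mathcal{F}) - \epsilon_\mathcal{F} \mathcal{M}(\vec{a}, \vec{t}, X) = \operatorname{Re} \mathcal{P}_X^{\vec{a}, \vec{t}}(\sigma_0, \mathcal{F}) + R(D),
\]
I would expand the $r$-th moment by the binomial theorem. The $j = 0$ term is the Gaussian moment supplied by Proposition \ref{moment computation 1 re}, giving $\tfrac{r!}{(r/2)!\, 2^{r/2}} \mathcal{V}_{\operatorname{Re}}^{r/2}(1 + O(r^3/\log X))$. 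For $j \geq 1$, Cauchy--Schwarz together with the even-moment case of Proposition \ref{moment computation 1 re} on exponent $2(r-j)$ bounds each cross-term by $\binom{r}{j} R_{\max}^j \mathcal{V}_{\operatorname{Re}}^{(r-j)/2}$. Summing the series and dividing out the main term's size $\mathcal{V}_{\operatorname{Re}}^{r/2}$ produces the relative error $r^{3/2}(\log\log \kappa_\mathcal{F})^2/\sqrt{\log X}$. The odd-$r$ statement follows the same binomial expansion but uses Proposition \ref{moment computation 1 re}'s sub-Gaussian odd-moment bound, and the imaginary case is analogous under the mesoscopic/macroscopic assumption on $\vec{t}$, which ensures $\mathcal{V}_{\operatorname{Im}}$ has comparable size.

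The main obstacle is balancing the Gaussian moment error $r^3/\log X$ (intrinsic to Proposition \ref{moment computation 1 re}) against the cross-term error $r^{3/2}(\log\log\kappa_\mathcal{F})^2/\sqrt{\log X}$ from the binomial expansion; requiring both to be $o(1)$ forces the precise range $r \ll (\log X)^{1/3}/(\log\log\kappa_\mathcal{F})^{2/3}$ stated in the proposition. A secondary subtlety is the uniform bound $R_{\max} \ll (\log\log\kappa_\mathcal{F})^2$, which can either be established directly from standard bounds on $\omega(D)$ for typical $D$, or enforced on a large-measure subfamily of $\mathcal{H}(\mathcal{F})$ with the exceptional set absorbed via the trivial moment estimate of Lemma \ref{moments of lambda polyl}.
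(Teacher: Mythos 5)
Your proposal follows essentially the same path as the paper: decompose $\mathcal{D}_X^{\vec{a},\vec{t}}$ into the prime part $\mathcal{P}_X^{\vec{a},\vec{t}}$ plus a square-of-prime part (which supplies the deterministic mean $\epsilon_\mathcal{F}\mathcal{M}$) plus small corrections, then feed this into the Gaussian moment estimates of Propositions \ref{moment computation 1 re} and \ref{moment computation 1 im} via a binomial expansion and Cauchy--Schwarz. The identification $\epsilon_{\chi_D}=+1$ and $\epsilon_{E\otimes\chi_D}=-1$, via the prime polynomial theorem in the symplectic case and the Hecke relation $\lambda(P)^2=\lambda(P^2)+1$ with \eqref{lambda on primes} in the orthogonal case, is exactly what the paper does, and the error balance you describe matches the paper's stated ranges of $r$.

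Two small points worth flagging. First, your displayed formula for the $P^2$-contribution carries an extraneous $d(P)$ in the denominator; for $f=P^2$ one has $\Lambda_{\mathcal F}(P^2)/d(P^2)=\lambda_{\mathcal F}(P^2)/2$, so the correct weight is $1/\bigl(2|P|^{2\sigma_0}\bigr)$, not $1/\bigl(2d(P)|P|^{2\sigma_0}\bigr)$; with the extra $d(P)$ the sum would be $O(1)$ rather than producing the logarithmic mean. Second, the paper obtains the pointwise remainder $R(D)\ll\log\log\kappa_{\mathcal F}$ uniformly in $D$ (the sum $\sum_{P\mid D}|P|^{-2\sigma_0}$ is $O(\log\log n)$ for every $D$ in the function field setting, and for the orthogonal family the extra fluctuating term $\sum_{d(P)\le X}\lambda(P^2)\cos(\cdots)/|P|^{2\sigma_0}=O(\log\log X)$ likewise holds uniformly), so there is no need for an exceptional-set argument or the weaker bound $(\log\log\kappa_{\mathcal F})^2$ you propose; either bound suffices to produce the error term stated in the proposition, but the sharper one is what the paper uses and it holds unconditionally.
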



\begin{proof}
	We first expand $\operatorname{Re}\mathcal{D}_X^{\vec{a}, \vec{t}}\big(\sigma_0, \mathcal{F}\big)$ and $\operatorname{Im} \mathcal{D}_X^{\vec{a}, \vec{t}}\big(\sigma_0, \mathcal{F}\big)$ as expressions in terms of $\mathcal{P}_X^{\vec{a}, \vec{t}}\big(\sigma_0, \mathcal{F}\big)$.
	
	For the real part, we have 
	\begin{align*}
	&\operatorname{Re} \mathcal{D}_X^{\vec{a}, \vec{t}}\big(\sigma_0, \mathcal{F}\big)
	= 
	 \, \operatorname{Re} \mathcal{P}_X^{\vec{a}, \vec{t}}\big(\sigma_0, \mathcal{F}\big)
	\\
	&+\sum_{\substack{P\in\CMcal{P}_{\leq \frac{X}{2}}\\ P\nmid D}}\frac{\lambda_{\mathcal{F}}(P^2)\left(a_1 \cos(2 t_1 \log |P|)+\cdots+a_k \cos(2 t_k \log |P|)\right)}{2|P|^{2\sigma_0}}+O_{\mathcal{F}, \vec{a}, k}\left(1\right),
	\end{align*}
where the error term stands for the contribution of $P^k$ with $k\geq 3$.
	
	For the symplectic case, applying Lemmas \ref{upper bound for truncated sum over primes } and \ref{logarithmic cosine sum}, we have 
	\begin{align*}
\operatorname{Re} \mathcal{D}_X^{\vec{a}, \vec{t}}\big(\sigma_0, \chi_D\big)
=\operatorname{Re} \mathcal{P}_X^{\vec{a}, \vec{t}}\big(\sigma_0, \chi_D\big)
+\CMcal{M}(\vec{a}, \vec{t}, X)+O(\log \log n),
	\end{align*}
where the error term $O(\log\log n)$ comes from the sum over $P$ such that $P| D$.
	
	For the orthogonal case, we can use the fact that $\lambda_{\mathcal{F}}(P^2)=\lambda(P^2)-1$ to compute
\[
\sum_{d(P)\leq X}\frac{\lambda(P^2)\cos(2td(P))}{|P|^{2\sigma_0}}=O\left(\log \log X\right),	
\] 
and hence again applying Lemma \ref{logarithmic cosine sum},
\[
\operatorname{Re} \mathcal{D}_X^{\vec{a}, \vec{t}}\big(\sigma_0, E\otimes \chi_D\big)
=\operatorname{Re} \mathcal{P}_X^{\vec{a}, \vec{t}}\big(\sigma_0, E\otimes \chi_D\big)
-\CMcal{M}(\vec{a}, \vec{t}, X)+O(\log \log m).
\]

For the imaginary part, applying Lemma \ref{logarithmic cosine sum} consists of sine function gives
	\[
	\operatorname{Im}  \mathcal{D}_X^{\vec{a}, \vec{t}}\big(\sigma_0, \mathcal{F}\big)
	=\operatorname{Im} \mathcal{P}_X^{\vec{a}, \vec{t}}\big(\sigma_0, \mathcal{F}\big)+O\big(\log\log(\kappa_\cF)\big),
	\]  
Thus, for the imaginary part of the logarithm the mean is zero, whereas $\CMcal{M}(\vec{a}, \vec{t}, X)$ is the mean value of the real part. 
	
The proof now can be completed by using binomial expansion, Propositions \ref{moment computation 1 re} and \ref{moment computation 1 im}, and following the proof of Lemma 5.3 of \cite{DL}. 
\end{proof}


\section{Proofs of Theorems \ref{Unconditional CLT for real near to half line:re}--\ref{main theorem conditional microscopic}, \ref{main theorem conditional mesoscopic and macroscopic}, and \ref{Uncon mes or micro}}\label{proof of main theorems}

We begin our discussion by studying the correlation between the real (imaginary) parts of Dirichlet polynomials at different shifts.


\subsection{Correlations of the Real Parts of Dirichlet Polynomials}\label{correlation of reals}
We would like to analyze the correlation between values of the real parts of the following Dirichlet polynomial at $\sigma_0+it_{j_1}$ and $\sigma_0+it_{j_2}$. Depending on the underlying family $\mathcal{F}$, we define
\[
\mathcal{R}_{X, t_j, D}
=
\operatorname{Re}
\sum_{f\in \CMcal{M}_{\leq X}}\frac{ \Lambda_{\mathcal{F}}(f)\chi_D(f)}{d(f)|f|^{\sigma_0+it_j}}  \quad \text{for} \quad j=j_1, j_2.
\]
We define
\begin{equation*} \label{eq:cov RXtjD}
\begin{split}
&\operatorname{Cov}(\mathcal{R}_{X, t_{j_1}, D}, \mathcal{R}_{X, t_{j_2}, D})
\\
&= \frac1{|\CMcal{H}(\CMcal{F})|}\sum_{D\in  \CMcal{H}(\CMcal{F})}
\big(  \mathcal{R}_{X, t_{j_1}, D} \, \mathcal{R}_{X, t_{j_2}, D}
\big) 
- \bigg(\frac1{|\CMcal{H}(\CMcal{F})|}\sum_{D\in  \CMcal{H}(\CMcal{F})} \mathcal{R}_{X, t_{j_1}, D} \bigg)
\bigg(\frac1{|\CMcal{H}(\CMcal{F})|}\sum_{D\in  \CMcal{H}(\CMcal{F})} \mathcal{R}_{X, t_{j_2}, D} \bigg).
\end{split}
\end{equation*}
Here for each $j=j_1, j_2$, following the computations carried in Section \ref{moments of lin comb}, we find that 
\[
\frac1{|\CMcal{H}(\CMcal{F})|}    \sum_{D\in \CMcal{H}(\CMcal{F})} \mathcal{R}_{X, t_j, D}
=\frac{1}{2} \log\Big(\min\Big\{ X, \frac{1}{|t_j|}\Big\}\Big)+O\left(q^{-n/2+(1/2+\epsilon)X}\right).
\] 
It thus remains to consider  
\[
\frac1{|\CMcal{H}(\CMcal{F})|}  \sum_{D\in  \CMcal{H}(\CMcal{F})} 
\operatorname{Re}
\sum_{f\in \CMcal{M}_{\leq X}}\frac{ \Lambda_{\mathcal{F}}(f)\chi_D(f)}{d(f)|f|^{\sigma_0+it_{j_1}}}
\operatorname{Re} \sum_{f\in \CMcal{M}_{\leq X}}\frac{ \Lambda_{\mathcal{F}}(f)\chi_D(f)}{d(f)|f|^{\sigma_0+it_{j_2}}}.
\]
Note that
\begin{align*}
& \frac{1}{|\CMcal{H}(\CMcal{F})|}	\sum_{D\in  \CMcal{H}(\CMcal{F})} \sum_{P_1\in\CMcal{P}_{\leq X}}
\sum_{P_2\in\CMcal{P}_{\leq X}}
\frac{\lambda_{\cF}(P_1)\lambda_{\cF}(P_2)
	\chi_D(P_1P_2)\cos(t_{j_1}\log|P_1|)\cos(t_{j_2}\log|P_2|)}
{|P_1P_2|^{\sigma_0}} \\
=& 	\frac{1}{| \CMcal{H}(\CMcal{F})|}
\sum_{P_1\in\CMcal{P}_{\leq X}} 
\sum_{P_2\in\CMcal{P}_{\leq X}}
\lambda_{\cF}(P_1)\lambda_{\cF}(P_2)\frac{\cos(t_{j_1}\log|P_1|)\cos(t_{j_2}\log|P_2|)}
{|P_1P_2|^{\sigma_0}}
\sum_{D\in  \CMcal{H}(\CMcal{F})}\chi_D(P_1 P_2).	
\end{align*}
If $P_1 P_2=\square$, then $P_1= P_2= Q$ for a prime $Q$ and
\begin{align*}
& \sum_{Q\in\CMcal{P}_{\leq X}} 
\frac{\lambda_{\cF}(Q)^2\cos(t_{j_1}\log|Q|)\cos(t_{j_2}\log|Q|)}{(|Q|+1)|Q|^{2\sigma_0-1}}	 \\
=&\frac12\sum_{Q\in\CMcal{P}_{\leq X}}
\frac{\lambda_{\cF}(Q)^2\cos((t_{j_1}+t_{j_2})\log|Q|)}{(|Q|+1)|Q|^{2\sigma_0-1}}
+
\frac12\sum_{Q\in\CMcal{P}_{\leq X}}
\frac{\lambda_{\cF}(Q)^2\cos((t_{j_1}-t_{j_2})\log|Q|)}{(|Q|+1)|Q|^{2\sigma_0-1}}.
\end{align*}    
From the contribution of the primes and prime squares, in a manner similar to our argument in Section \ref{moments of lin comb}, up to a negligibly small error term, we can deduce that the above is equal to
\begin{align*}
&\frac{1}{2} \log\bigg(\min\Big\{ X, \frac{1}{|t_{j_1}-t_{j_2}|}\Big\}\bigg)
+\frac{1}{2} \log\bigg(\min\Big\{ X, \frac{1}{|t_{j_1}+t_{j_2}|}\Big\}\bigg)\\
&+\frac{1}{4} \log\bigg(\min\Big\{ X, \frac{1}{2|t_{j_1}|}\Big\}\bigg) \log\bigg(\min\Big\{ X, \frac{1}{2|t_{j_2}|}\Big\}\bigg).
\end{align*}
By ignoring the terms $P_1 P_2\neq\square$, at the cost of a negligibly small error term, we finally obtain
\begin{equation}\label{computed-cov RXtjD}
\begin{split}
\operatorname{Cov}\big(\mathcal{R}_{X, t_{j_1}, D}, \mathcal{R}_{X, t_{j_2}, D}\big)=\frac{1}{2}\log\bigg(\min\Big\{ X, \frac{1}{|t_{j_1}-t_{j_2}|}\Big\}\bigg)
+\frac12 \log\bigg(\min\Big\{ X, \frac{1}{|t_{j_1}+t_{j_2}|}\Big\}\bigg). 
\end{split}
\end{equation}


\begin{prop}\label{Gaussian for real part of DP}
	Let $X\geq 1$ be sufficiently small such that $\frac{X}{n}\to 0$ as $n\to \infty$, but $\log X=\log n+o(\sqrt{\log n})$.
	Suppose that $0\leq \delta_1, \ldots, \delta_k\leq 1$ such that $|t_1|\asymp \frac{1}{X^{\delta_1}}, \ldots, |t_k|\asymp \frac{1}{X^{\delta_k}}$. Consider 
	\[
	\CMcal{M}_{X, t_j}
	=\frac{1}{2} \log\Big(\min\big\{X, \tfrac{1}{2|t_j|}\big\}\Big), 
	\quad \mathcal{V}_{X, t_j}=\frac{1}{2}\log X+\frac{1}{2} \log\Big(\min\big\{ X, \tfrac{1}{|t_j|}\big\}\Big),
	\]
	and 
	\[
	\widehat{\mathcal{R}}_{X, t_j, D}
	=\frac{\mathcal{R}_{X, t_j, D}- \epsilon_{\mathcal{F}} \CMcal{M}_{X,  t_j}}{\sqrt{\mathcal{V}_{X, t_j}}} \quad \text{for} \quad j=1, \ldots, k.
	\]
As $D$ varies in $ \CMcal{H}(\mathcal{F})$, the vector 
$\big(\widehat{\mathcal{R}}_{X, t_1, D}, \ldots, \widehat{\mathcal{R}}_{X, t_k, D}\big)$ converges in law to a Gaussian vector $(Z_1, \ldots, Z_k)$ with mean $0$ and covariance function  
	\[
	\operatorname{Cov}(Z_{j_1}, Z_{j_2})=
	\begin{cases}
	1 & \text{ if }\,\, t_{j_1}=\pm t_{j_2}, \\ 
	\frac{2\left(\delta_{j_1}\wedge \delta_{j_2}\right)}{\sqrt{1+\delta_{j_1}} \sqrt{1+\delta_{j_2}}} & \text{ otherwise}.
	\end{cases}
	\]
\end{prop}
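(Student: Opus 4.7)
The plan is to apply the Cram\'er--Wold device combined with the method of moments, with all the analytic input supplied by Proposition~\ref{moment computation 2} and the covariance formula \eqref{computed-cov RXtjD}. By Cram\'er--Wold, it suffices to fix an arbitrary $\vec c=(c_1,\dots,c_k)\in\mathbb R^k$ and prove that the single random variable
$$\sum_{j=1}^k c_j\,\widehat{\mathcal R}_{X,t_j,D}
=\sum_{j=1}^k \tilde a_j\bigl(\mathcal R_{X,t_j,D}-\epsilon_{\mathcal F}\mathcal M_{X,t_j}\bigr),
\qquad \tilde a_j:=c_j/\sqrt{\mathcal V_{X,t_j}},$$
converges in law, as $D$ varies in $\mathcal H(\mathcal F)$, to $\sum_j c_j Z_j$. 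The right-hand side equals $\operatorname{Re}\mathcal D_X^{\vec{\tilde a},\vec t}(\sigma_0,\mathcal F)-\epsilon_{\mathcal F}\mathcal M(\vec{\tilde a},\vec t,X)$ (modulo the bounded contribution of prime powers $P^k$ with $k\ge 3$ that is already absorbed in Proposition~\ref{moment computation 2}), so applying that proposition with coefficient vector $\vec{\tilde a}$ gives, for every even $r\ll(\log X)^{1/3}/(\log\log\kappa_{\mathcal F})^{2/3}$,
$$\frac{1}{|\mathcal H(\mathcal F)|}\sum_{D\in\mathcal H(\mathcal F)}\Bigl(\textstyle\sum_j c_j\widehat{\mathcal R}_{X,t_j,D}\Bigr)^{\!r}=\frac{r!}{(r/2)!\,2^{r/2}}\bigl(\mathcal V_{\operatorname{Re}}(\vec{\tilde a},\vec t,X)\bigr)^{r/2}\bigl(1+o(1)\bigr),$$
with the corresponding odd moment of lower order. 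These are precisely the moments of a centred Gaussian of variance $\mathcal V_{\operatorname{Re}}(\vec{\tilde a},\vec t,X)$.

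Next I identify the limit of this variance. Inserting $\tilde a_j=c_j/\sqrt{\mathcal V_{X,t_j}}$ into \eqref{variance for real}, the two diagonal sums collapse into $\sum_j\tilde a_j^{\,2}\mathcal V_{X,t_j}+O(1)=\sum_j c_j^2+O(1)$, producing the diagonal covariance entries equal to~$1$. For the off-diagonal part, the scaling $|t_j|\asymp X^{-\delta_j}$ with $0\le\delta_j\le 1$ yields $\mathcal V_{X,t_j}=\tfrac12(1+\delta_j)\log X+O(1)$, and hence $\sqrt{\mathcal V_{X,t_{j_1}}\mathcal V_{X,t_{j_2}}}\sim\tfrac12\sqrt{(1+\delta_{j_1})(1+\delta_{j_2})}\log X$. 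When $t_{j_1}\ne\pm t_{j_2}$, the standing $\asymp$-hypotheses force $|t_{j_1}\pm t_{j_2}|\asymp X^{-\min(\delta_{j_1},\delta_{j_2})}$, so the logarithmic factor in \eqref{variance for real} is $2\min(\delta_{j_1},\delta_{j_2})\log X+O(1)$, giving a limiting off-diagonal contribution of $4c_{j_1}c_{j_2}(\delta_{j_1}\wedge\delta_{j_2})/\sqrt{(1+\delta_{j_1})(1+\delta_{j_2})}$; matching this against $2c_{j_1}c_{j_2}\operatorname{Cov}(Z_{j_1},Z_{j_2})$ identifies the claimed covariance. In the degenerate case $t_{j_1}=\pm t_{j_2}$ (which forces $\delta_{j_1}=\delta_{j_2}=\delta$) one of $|t_{j_1}\mp t_{j_2}|$ vanishes while the other is $\asymp X^{-\delta}$, so the logarithm equals $(1+\delta)\log X+O(1)$ and yields $\operatorname{Cov}(Z_{j_1},Z_{j_2})=1$, again as stated.

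The hard part will be bookkeeping rather than new analysis, since Proposition~\ref{moment computation 2} already supplies the moment asymptotics uniformly in the coefficient vector and \eqref{computed-cov RXtjD} provides the correct second-moment structure at finite $X$. Two points deserve care: (i) the rescaling by $1/\sqrt{\mathcal V_{X,t_j}}=O(1/\sqrt{\log X})$ produces small coefficients $\tilde a_j$, and one should check that the $O(1)$ corrections in $\mathcal V_{X,t_j}$ and in the cross-term logarithms do not survive after division by $(\mathcal V_{\operatorname{Re}})^{1/2}$ so as to perturb the limiting covariance; (ii) the boundary values $\delta_j\in\{0,1\}$ and the degenerate case $t_{j_1}=\pm t_{j_2}$ must be handled separately to verify that the limiting quadratic form in $\vec c$ assembles into exactly the covariance matrix prescribed. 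Both amount to direct computation, after which the method of moments immediately closes the argument and produces the multivariate Gaussian limit.
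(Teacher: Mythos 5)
Your proposal follows essentially the same path as the paper's own proof: both compute the covariance of the normalized Dirichlet polynomials (equivalently, the limit of $\mathcal{V}_{\operatorname{Re}}(\vec{\tilde a}, \vec t, X)$ after rescaling) using \eqref{computed-cov RXtjD}, both feed the rescaled coefficient vector into Proposition~\ref{moment computation 2} to obtain Gaussian moment asymptotics, and both close by the method of moments plus the Cram\'er--Wold device. The only difference is presentational: you make the Cram\'er--Wold reduction and the variance-to-covariance bookkeeping explicit, whereas the paper computes the pairwise covariances $\operatorname{Cov}(\widehat{\mathcal{R}}_{X,t_{j_1},D},\widehat{\mathcal{R}}_{X,t_{j_2},D})$ first and then invokes the moment match for a generic linear combination $a_1\widehat{\mathcal{R}}_{X,t_1,D}+\cdots+a_k\widehat{\mathcal{R}}_{X,t_k,D}$ — the two are the same argument written in opposite orders.
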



\begin{proof}
From \eqref{computed-cov RXtjD}, up to a negligible small error term, we have
	\begin{align*} 
&\operatorname{Cov}\big(\widehat{\mathcal{R}}_{X, t_{j_1}, D}, \widehat{\mathcal{R}}_{X, t_{j_2}, D}\big)
	\\
	&=\mathbb{E}\big(\widehat{\mathcal{R}}_{X, t_{j_1}, D} \widehat{\mathcal{R}}_{X, t_{j_2}, D}\big)
	-\mathbb{E}\big(\widehat{\mathcal{R}}_{X,  t_{j_1}, D}\big)
	\mathbb{E}\big(\widehat{\mathcal{R}}_{X, t_{j_2}, D}\big) 
	= \mathbb{E}\big(\widehat{\mathcal{R}}_{X, t_{j_1}, D} \widehat{\mathcal{R}}_{X, t_{j_2}, D}\big)
	\\
	&= \frac{1}{\sqrt{\mathcal{V}_{X, t_{j_1}} \mathcal{V}_{X, t_{j_2}}}}
	\bigg(\frac{1}{2} \log\Big(\min\Big\{ X, \frac{1}{|t_{j_1}-t_{j_2}|}\Big\}\Big)
	+\frac{1}{2} \log\Big(\min\Big\{ X, \frac{1}{|t_{j_1}+t_{j_2}|}\Big\}\Big)\bigg),
	\end{align*} 
since $\mathbb{E}\big(\widehat{\mathcal{R}}_{X,a_j, t_j, D}\big)=\mathbb{E}\big(\widehat{\mathcal{R}}_{X, t_j, D}\big)=0$. Therefore, by the hypotheses on $t_{j_1}$ and $t_{j_2}$, 
	\[
	\operatorname{Cov}\big(\widehat{\mathcal{R}}_{X, t_{j_1}, D}, \widehat{\mathcal{R}}_{X, t_{j_2}, D}\big)
	\longrightarrow \operatorname{Cov}\big(Z_{j_1}, Z_{j_2}\big) \quad \text{ as } n\to \infty.
	\]
Let $\vec{a}=(a_1, \ldots, a_k)\in \mathbb{R}^k$. From Proposition \ref{moment computation 2} together with the hypothesis that $\frac{X}{n} \to 0$ but $\log X=\log n+ o(\sqrt{\log n})$, we conclude that the moments of $a_1\widehat{\mathcal{R}}_{X, t_1, D}+\cdots+a_k\widehat{\mathcal{R}}_{X, t_k, D}$ asymptotically match those of a Gaussian random variable with mean $0$ and variance $1$. Since the Gaussian distribution is determined by its moments, our theorem follows.

In fact, we used the known result from probability that, if $\{Z_j\}_{j=1}^k$ is a sequence of Gaussian random variables, then $\{Z_j\}_{j=1}^k$ is a $k$-variate Gaussian random variable if and only if any linear combination of $Z_j$'s has Gaussian distribution. In the case that $(Z_{j_1}, Z_{j_2})$ has a bivariate Gaussian distribution, $Z_{j_1}$ and $Z_{j_2}$ are independent if and only if they are uncorrelated.
\end{proof}


\begin{proof}[Proof of Theorem \ref{Unconditional CLT for real near to half line:re}] 
	Take $\sigma_0=\frac12+\frac{c}{X}$. We begin by using the Proposition \ref{logL-as-PX} to write $\log |\mathfrak{L}_{\vec{a}, \vec{t}}\left(s, \chi_D\right)|$ as a linear combination of  $\mathcal{R}_{X, t_j, D}$'s and then apply Proposition \ref{Gaussian for real part of DP}. Ve view $\log |\mathfrak{L}_{\vec{a}, \vec{t}}\left(s, \chi_D\right)|$ as sum of two random variables, say $Y_D+Z_D$, where $Y_D=a_1\mathcal{R}_{X, t_1, D}+\cdots+a_k\mathcal{R}_{X, t_k, D}$ and $Z_D$ is an error term. The hypothesis $g\left(\sigma_0-1/2\right)=o(\sqrt{\log n})$ imply that $\log X=\log n+O(\log_2 n)$, which satisfies the hypothesis of Proposition \ref{Gaussian for real part of DP}. Thus, by Proposition \ref{Gaussian for real part of DP} the proof of the theorem is complete once we are able to show that (see \cite[Lemma 2.9]{HoughSymplectic}) 
	\[
	\frac{1}{|\mathcal{H}_n|}\sum_{D\in \mathcal{H}_n}|Z_D|^2=o\left(\log n\right).
	\]
Indeed, thanks to Lemma \ref{moments of lambda polyl} with $k=2$ and Lemma \ref{moments of lambda polyl2}, the above claim holds in view of the hypothesis that $\frac{g}{X}= o(\sqrt{\log n})$.

An analogous argument applies to the orthogonal family, although we omit the details in order to streamline the presentation.

 \end{proof}

\begin{proof}[Proof of Theorem \ref{main theorem conditional microscopic}]
	This directly follows from combining Theorem \ref{Unconditional CLT for real near to half line:re}, Corollary \ref{difference between logarithm micro}, and Lemma 2.9 of \cite{HoughSymplectic}.
\end{proof}


\subsection{Correlations of the Imaginary Parts of Dirichlet Polynomials}\label{subsection 6.2}

We now want to analyze the imaginary parts similarly, so we define the following for each $j=1,\ldots, k$.
\[
\mathcal{I}_{X, t_j, D}
:=\, \operatorname{Im} \sum_{f\in \CMcal{M}_{\leq X}}\frac{ \Lambda_{\mathcal{F}}(f)\chi_D(f)}{d(f)|f|^{\sigma_0+it_j}}. 
\]
Following the same argument as in Section \ref{correlation of reals},  we are led to study a sum of the following form.
\begin{align*}
 \sum_{Q\in\CMcal{P}_{\leq X}} 
\frac{\sin(t_{j_1}\log|Q|)\sin(t_{j_2}\log|Q|)}{(|Q|+1)|Q|^{2\sigma_0-1}}=\frac12\bigg(\sum_{Q\in\CMcal{P}_{\leq X}}
\frac{\cos((t_{j_1}+t_{j_2})\log|Q|)}{(|Q|+1)|Q|^{2\sigma_0-1}}
-
\sum_{Q\in\CMcal{P}_{\leq X}}
\frac{\cos((t_{j_1}-t_{j_2})\log|Q|)}{(|Q|+1)|Q|^{2\sigma_0-1}}\bigg).
\end{align*}
Thus, up to a negligibly small error term, we have
\begin{align}\label{covariance for imaginary}
\operatorname{Cov}(\mathcal{I}_{X, t_{j_1}, D}, \mathcal{I}_{X, t_{j_2}, D})
=\frac{1}{2} \log\bigg(\min\Big\{ X, \frac{1}{|t_{j_1}-t_{j_2}|}\Big\}\bigg)-\frac{1}{2} \log\bigg(\min\Big\{ X, \frac{1}{|t_{j_1}+t_{j_2}|}\Big\}\bigg),
\end{align}
This indicates that the quantities $\operatorname{Im} \log L\!\left(\tfrac{1}{2}+it_j, \mathcal{F}\right)$ are always approximately independent.


\begin{prop}\label{Gaussian for imaginary part of DP}
	Assume the hypotheses of Proposition \ref{Gaussian for real part of DP} on $X, t_j \geq 0$ and $0\leq \delta_j<1$. We further introduce the following. 
	\[
	\widehat{\mathcal{I}}_{X, t_j, D}
	:=\frac{\mathcal{I}_{X, t_j, D}}{\sqrt{\mathcal{V}_{X, t_j}}}
	\quad \text{ and } \quad 
	\widehat{\mathcal{V}}_{X, t_j}
	:=\frac{1}{2}\log X-\frac{1}{2} \log\bigg(\min\Big\{ X, \frac{1}{|t_j|}\Big\}\bigg), 
	\quad j=1,\ldots, k.
	\]
	As $D$ varies in $ \CMcal{H}(\mathcal{F})$, the vector 
	$\big(\widehat{\mathcal{I}}_{X, t_1, D}, \ldots, \widehat{\mathcal{I}}_{X, t_k, D}\big)$ converges in law to a Gaussian vector $(Z_1, \ldots, Z_k)$ with mean $0$ and covariance 
	\[
	\textrm{Cov}(Z_{j_1}, Z_{j_2})=
	\begin{cases}
	1 & \text{ if } \,\, t_{j_1}= t_{j_2}, \\ 
	0 & \text{ otherwise}.
	\end{cases}
	\]
	\end{prop}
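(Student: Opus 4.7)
The plan is to follow the template of Proposition \ref{Gaussian for real part of DP} exactly, replacing real-part quantities with imaginary-part analogues. The two key inputs are the covariance identity \eqref{covariance for imaginary} established in Section \ref{subsection 6.2}, and the moment asymptotics for the imaginary part supplied by Proposition \ref{moment computation 2} in the mesoscopic and macroscopic regimes. First I would verify the limiting covariance structure. Using \eqref{covariance for imaginary} together with $\mathbb{E}(\widehat{\mathcal{I}}_{X,t_j,D}) = o(1)$ (from the odd-moment bound of Proposition \ref{moment computation 2}), the normalized covariance reads
\begin{align*}
\operatorname{Cov}\bigl(\widehat{\mathcal{I}}_{X,t_{j_1},D},\widehat{\mathcal{I}}_{X,t_{j_2},D}\bigr) = \frac{\tfrac{1}{2}\log\bigl(\min\{X,1/|t_{j_1}-t_{j_2}|\}\bigr)-\tfrac{1}{2}\log\bigl(\min\{X,1/|t_{j_1}+t_{j_2}|\}\bigr)}{\sqrt{\widehat{\mathcal{V}}_{X,t_{j_1}}\widehat{\mathcal{V}}_{X,t_{j_2}}}} + o(1).
\end{align*}

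Under the hypothesis $|t_j|\asymp X^{-\delta_j}$ with $\delta_j\in[0,1)$, we have $\widehat{\mathcal{V}}_{X,t_j} = \tfrac{1-\delta_j}{2}\log X + O(1)$. When $t_{j_1}=t_{j_2}$, the first logarithm equals $\log X$ while the second equals $\log\min\{X,1/(2|t_{j_1}|)\} \sim \delta_{j_1}\log X$, so the quotient converges to $1$. When $t_{j_1}$ and $t_{j_2}$ are distinct (and not each other's negatives), both $|t_{j_1}\pm t_{j_2}|\asymp X^{-\min(\delta_{j_1},\delta_{j_2})}$, so the numerator is $O(1)$ and the quotient vanishes. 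This establishes the claimed covariance structure, and in particular reveals why the imaginary part behaves differently from the real part: the sign difference in \eqref{covariance for imaginary} causes cancellation precisely when the real-part covariance would have been nontrivial.

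To promote pointwise covariance convergence to convergence in law, I would invoke the method of moments together with the classical criterion that $(Z_1,\ldots,Z_k)$ is jointly Gaussian if and only if every real linear combination $a_1 Z_1+\cdots+a_k Z_k$ is Gaussian. For any fixed $\vec{a}\in\mathbb{R}^k$, the linear combination $a_1\widehat{\mathcal{I}}_{X,t_1,D}+\cdots+a_k\widehat{\mathcal{I}}_{X,t_k,D}$ is, up to the normalizing factor, the imaginary part of $\mathcal{D}_X^{\vec{a},\vec{t}}(\sigma_0,\mathcal{F})$. Proposition \ref{moment computation 2} yields the precise Gaussian moment asymptotics for this quantity in the mesoscopic and macroscopic regimes, matching those of a centered Gaussian with variance $\vec{a}^{T}\Sigma\vec{a}$, where $\Sigma$ is the limiting covariance matrix identified above. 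Since the Gaussian law is determined by its moments, joint Gaussian convergence follows; independence of the limiting coordinates is immediate from $\Sigma=I$. The main technical point, no harder here than for the real-part analogue, is to confirm that the error terms in Proposition \ref{moment computation 2} remain negligible after normalization under the standing hypotheses $\log X = \log n + o(\sqrt{\log n})$ and $X/n\to 0$, so that the moment matching survives uniformly in $r$ up to the allowed range of growth.
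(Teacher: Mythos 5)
Your proposal is correct and follows essentially the same route as the paper: compute the limiting covariance from \eqref{covariance for imaginary}, normalize by the $\widehat{\mathcal{V}}_{X,t_j}$ factors, and then pass to joint Gaussianity via the Cram\'er--Wold device combined with the even-moment asymptotics for $\operatorname{Im}\mathcal{D}_X^{\vec{a},\vec{t}}$ from Proposition \ref{moment computation 2}. The only cosmetic difference is that the paper's text cites Proposition \ref{moment computation 1 re} at this step (almost certainly a slip for the imaginary-part moment result); your explicit appeal to Proposition \ref{moment computation 2} is the cleaner reference.
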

	
	\begin{proof}
Up to a negligible error term, we have 
		\begin{align*} 
		&\operatorname{Cov}\big(\widehat{\mathcal{I}}_{X, t_{j_1}, D}, \widehat{\mathcal{I}}_{X, t_{j_2}, D}\big)
		=\mathbb{E}\big(\widehat{\mathcal{I}}_{X, t_{j_1}, D} \widehat{\mathcal{I}}_{X, t_{j_2}, D}\big)
		-\mathbb{E}\big(\widehat{\mathcal{I}}_{X, t_{j_1}, D}\big)
		\mathbb{E}\big(\widehat{\mathcal{I}}_{X, t_{j_2}, D}\big) 
		= \mathbb{E}\big(\widehat{\mathcal{I}}_{X, t_{j_1}, D} \widehat{\mathcal{I}}_{X,  t_{j_2}, D}\big)
		\\
		&= \frac{1}{\sqrt{\mathcal{V}_{X, t_{j_1}} \mathcal{V}_{X, t_{j_2}}}}
		\bigg(\frac{1}{2} \log\Big(\min\Big\{ X, \tfrac{1}{|t_{j_1}-t_{j_2}|}\Big\}\Big)
		-\frac{1}{2} \log\Big(\min\Big\{ X, \tfrac{1}{|t_{j_1}+t_{j_2}|}\Big\}\Big)\bigg),
		\end{align*}
since $\mathbb{E}\big(\widehat{\mathcal{I}}_{X, t_{j_1}, D}\big)
		=\mathbb{E}\big(\widehat{\mathcal{I}}_{X, t_{j_2}, D}\big)=0$. 
Therefore,  
		\[
		\operatorname{Cov}\big(\widehat{\mathcal{I}}_{X, t_{j_1}, D}, \widehat{\mathcal{I}}_{X, t_{j_2}, D}\big)
		\longrightarrow \operatorname{Cov}(Z_{j_1}, Z_{j_2}) \quad \text{ as } n\to \infty
		\]
on the basis of our assumption on $t_{j_1}$ and $t_{j_2}$. 
		From Proposition \ref{moment computation 1 re} together with the hypothesis that $\frac{X}{n} \to 0$ but $\log X=\log n+ o(\sqrt{\log n})$ and the ranges of $t_j$'s, we conclude that the moments of $a_1\widehat{\mathcal{I}}_{X, t_1, D}+\cdots+a_k\widehat{\mathcal{I}}_{X, t_k, D}$ asymptotically match the moments of a Gaussian random variable. Since the Gaussian distribution is determined by its moments, our theorem follows.
	\end{proof}


\begin{proof}[Proof of Theorem \ref{Unconditional CLT for real near to half line:im}] 
The proof follows exactly the same way as the proof of Theorem \ref{Unconditional CLT for real near to half line:re} by using Propositions \ref{logL-as-PX} and \ref{Gaussian for imaginary part of DP}.
\end{proof}


\begin{proof}[Proof of Theorems \ref{main theorem conditional mesoscopic and macroscopic} and \ref{Uncon mes or micro}]

	The first statement in the theorems follow from Proposition \ref{logL-as-PX}, Corollary \ref{difference between logarithm micro} (Proposition \ref{close to 1/2-line} for orthogonal family), and Proposition \ref{Gaussian for real part of DP}. The second parts follow from combining Proposition \ref{logL-as-PX}, Proposition \ref{difference between shift of logarithm2}, and Proposition \ref{Gaussian for imaginary part of DP}.

\end{proof}


\section{Proofs of Unconditional Results on the Critical Circle}


\subsection{Lower Bound in Theorem \ref{mild conditioning lower bound}}

Since we will consider microscopic shifts, to obtain lower bounds for the distribution function, we need to understand nonvanishing of $L$-functions at these heights via one-level density estimates. Recently, Carneiro, Chirre and Milinovich \cite{CCM} found a general recipe for the proportion of nonvanishing of $L$-functions at low lying heights over different families based on the theory of reproducing kernel Hilbert spaces of entire functions. 

For the symplectic family, Rudnick~\cite{Rud} obtained an asymptotic formula for the one-level density 
\begin{align}\label{one level density}
\lim_{g\to \infty}\sum_{D\in  \CMcal{H}_{2g+1}}\sum_{j=1}^{2g}\phi(2g \theta_{j, D})=\int_{\mathbb{R}}\phi(x)W_{Sp}(x)dx,
\end{align}
where $\hat{\phi}$ is supported in $(-2, 2)$. Later, lower order terms for $\hat{\phi}$ with the same support has been established by Bui and Florea in \cite{BF}. One can obtain a similar result for $\mathcal{H}_{2g+2}$. 

As an application of \cite[Theorem 2]{CCM} and \eqref{one level density}, we have the following corollary.


\begin{cor}\label{no-vanishing result}
	Let $\mathfrak{H}_{Sp, 2\pi}$ denote the normed vector space of entire functions $h$ of exponential type at most $2\pi$ with finite norm as in
	\[
	||h||_{ \CMcal{H}_{Sp, 2\pi}}
	:=||h||_{L^2(\mathbb{R}, W_{Sp})}=\left(\int_{\mathbb{R}}|h(x)|^2 W_{Sp}(x)dx\right)^{1/2}<\infty.
	\]
Let $K= K_{Sp, 2\pi}$ be the reproducing kernel of the Hilbert space associated to $\mathfrak{H}_{Sp, 2\pi}$ given by \cite[Theorem 6, case (ii)]{CCM}. Then 
	\[
	\liminf_{g\to \infty}\frac{1}{| \CMcal{H}_{n}|}\sum_{\substack{D\in  \CMcal{H}_{n}
			\\ 
			L\left(\frac12 +i\frac{2\pi}{\log q}\frac{\alpha}{n}\right)\neq 0}}1
	\geq 1-\frac{1}{K(\alpha, \alpha)+|K(\alpha, -\alpha)|}.
	\]
\end{cor}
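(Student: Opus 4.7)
The plan is to apply the general nonvanishing machine of \cite[Theorem 2]{CCM}, which converts an asymptotic one-level density statement into a lower bound on the proportion of $L$-values that are nonzero at a prescribed low-lying height. In the symplectic setting the relevant input is the one-level density \eqref{one level density}, valid for test functions $\phi$ whose Fourier transform is supported in $(-2,2)$. Under the Paley--Wiener correspondence used in \cite{CCM}, this support constraint is exactly the condition that picks out entire functions of exponential type at most $2\pi$, so the natural Hilbert space is $\mathfrak{H}_{Sp,2\pi}$ with weight $W_{Sp}$, placing us in case (ii) of \cite[Theorem 6]{CCM} with reproducing kernel $K = K_{Sp,2\pi}$.

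The next step is to translate the vanishing event at the prescribed height into a condition on the eigenphases $\theta_{j,D}$. Setting $u = q^{-s}$ and using the product representation \eqref{eq:L as polyl}, the value $L\bigl(\tfrac12 + i\tfrac{2\pi\alpha}{n\log q}, \chi_D\bigr)$ equals $\mathcal{L}^{*}(q^{-1/2}e(-\tfrac{\alpha}{n}\log q\cdot\frac{1}{\log q}),\chi_D)$ up to the trivial factor, and vanishes precisely when some $\theta_{j,D}$ coincides with $-\alpha/n$. Since $\chi_D$ is a real character, the eigenphases are symmetric under $\theta \mapsto -\theta$, so a zero at normalized height $\alpha$ forces the simultaneous presence of eigenphases at both $\alpha$ and $-\alpha$ on the scale $n\theta$. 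This is the observation that brings the two-point kernel $K(\alpha,\pm\alpha)$ into the problem rather than a one-point one.

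With these ingredients the result is a direct specialization of the reproducing-kernel extremal problem of \cite[Theorem 2]{CCM} at the pair of points $\pm\alpha$: one optimizes over nonnegative $h\in\mathfrak{H}_{Sp,2\pi}$ majorizing the indicator at $\{\alpha,-\alpha\}$, and the sharpest such majorant is governed by the $2\times 2$ reproducing-kernel matrix whose diagonal entry is $K(\alpha,\alpha)$ and whose off-diagonal entry is $K(\alpha,-\alpha)$. The optimal row sum after choosing the sign on the off-diagonal contribution is $K(\alpha,\alpha)+|K(\alpha,-\alpha)|$. Feeding this into \eqref{one level density} bounds the proportion of $D\in\mathcal{H}_n$ admitting a zero at height $\alpha$ by $1/(K(\alpha,\alpha)+|K(\alpha,-\alpha)|)$, and passing to the complement gives the claimed $\liminf$.

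The only genuinely delicate point in the argument is correctly accounting for the symmetry: one must check that the symplectic pairing $\theta\mapsto -\theta$ is responsible for the $|K(\alpha,-\alpha)|$ (rather than the signed value) appearing in the denominator, which reflects the freedom to flip the sign of the symmetric extremal test function. Beyond this bookkeeping, no new estimation is needed, because the hypotheses of \cite[Theorem 2]{CCM} have already been verified: the density weight $W_{Sp}$ in \eqref{one level density} is precisely the measure attached to $\mathfrak{H}_{Sp,2\pi}$ in \cite[Theorem 6(ii)]{CCM}, and the exponential-type constraint matches the permitted support of $\hat\phi$.
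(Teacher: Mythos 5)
Your proposal is correct and follows the same approach as the paper, which gives essentially no proof beyond the sentence ``As an application of \cite[Theorem 2]{CCM} and \eqref{one level density}, we have the following corollary.'' You correctly identify that the support condition $\hat\phi \subset (-2,2)$ from Rudnick's one-level density \eqref{one level density} corresponds under Paley--Wiener to exponential type $2\pi$, that this places you in case (ii) of \cite[Theorem 6]{CCM}, and that the presence of eigenphases in pairs $\pm\theta_{j,D}$ together with the choice of sign (even versus odd extremal majorant) is what produces $K(\alpha,\alpha) + |K(\alpha,-\alpha)|$ in the denominator; your unpacking of the extremal problem is a sound reconstruction of what \cite[Theorem 2]{CCM} encodes. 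Two cosmetic remarks: the expression $e(-\tfrac{\alpha}{n}\log q \cdot \tfrac{1}{\log q})$ in your eigenphase computation simplifies to $e(-\alpha/n)$, and the one-level density normalizes by $2g$ while your shift corresponds to the scale $n\theta$; since $n/(2g)\to 1$ as $g\to\infty$, this discrepancy is harmless in the limit, but it is worth stating explicitly.
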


Note that, the analytic conductor of $L(s, \chi_D)$ is $c_{\chi_D}:=q^{n}$, and so $\log(c_{\chi_D})=n\log q$. Define
\begin{equation}\label{values of nonvanishing}
	r(\alpha):=1-\frac{1}{K(\alpha, \alpha)+|K(\alpha, -\alpha)|} \quad \text{for } \, \alpha>0.
\end{equation}	 
Its maximum value is its limit as $\alpha\to 0^+$, and $r(0)=0.9427\ldots$, as in \cite{BF}. The limit of $r(\alpha)$ is $\frac12$ as $\alpha\to \infty$ $($see \cite[Figure 5]{CCM} for more details$)$.


\begin{proof}[Proof of Theorem \ref{mild conditioning lower bound}]
Corollary \ref{no-vanishing result} implies that at least $r(\alpha)$ proportion of $D\in  \CMcal{H}_{n}$ for which 
	$
	L\big(\tfrac12+it, \chi_D\big)\neq 0.
	$
For the microscopic regime, note that taking $a_1=1, a_2=0$,
	\[
	\mathcal{M}(\vec{a}, \vec{t}, g)=\frac{\log g}{2}
	\quad \text{ and }\quad \mathcal{V}_{\operatorname{Re}}(\vec{a}, \vec{t}, g)=\log g.
	\]
We then consider the following events. For a real number $b$, let $\mathcal{A}_b$ be the set of $D\in  \CMcal{H}_{n}$ such that 
	\[
	\frac{1}{\sqrt{\log g}}\Big(\log \big|L\big(\tfrac{1}{2}+it, \chi_D\big)\big|-\frac12 \log g\Big)>b.
	\]
We also define $\mathcal{B}$ to be the set of $D\in  \CMcal{H}_n$ such that $L\big(\tfrac12+it, \chi_D\big)\neq 0$. Let $\mathcal{B}^C$ denote its complement in $ \CMcal{H}_n$. Observe that since $\mathbb{P}(\mathcal{A}_b\cap \mathcal{B}^C)=0$,
	\[
	\mathbb{P}(\mathcal{A}_b)
	=\mathbb{P}(\mathcal{A}_b \cap \mathcal{B})+\mathbb{P}(\mathcal{A}_b \cap \mathcal{B}^C)
	=\mathbb{P}(\mathcal{A}_b\cap \mathcal{B}) 
	=\mathbb{P}(\mathcal{A}_b | \mathcal{B}) \mathbb{P}(\mathcal{B}).
	\]
	Assume that the event $\mathcal{A}_b$ happens. We first use Proposition \ref{logL-as-PX} to write $\log L\big(\tfrac12+it, \chi_D\big)$ in terms of $\widetilde{P}_X(\tfrac12+it,\chi_D)$, and then apply Proposition \ref{moment computation 1 re}. Finally, we use Lemma  \ref{moments of lambda polyl} to show that the error term coming from Proposition \ref{logL-as-PX} is negligibly small. This argument gives 
	\[
	\mathbb{P}(\mathcal{A}_b \setminus \mathcal{B}) 
	=\frac{1}{\sqrt{2\pi}} \int_b^\infty e^{-\frac{u^2}{2}}du.
	\]
	Note that, the above discussion tells us $\mathbb{P}(\mathcal{B})\geq r(\alpha)$. Hence, we conclude that 
	\[
	\mathbb{P}(\mathcal{A}_b)\geq  \frac{r(\alpha)}{\sqrt{2 \pi }} \int_b^\infty e^{-\frac{u^2}{2}}du.
	\]
\end{proof}


\subsection{Lower Bound in Theorem \ref{unconditional result for orthogonal family}}

From \cite[Corollary 10.1]{CL}, one can obtain an asymptotic formula for the one-level density as 
\begin{align}\label{one level density o}
\lim_{m\to \infty}\sum_{D\in  \CMcal{H}_{n}^{\Delta, +}}\sum_{j=1}^{m}\phi(m \theta_{j}(E\otimes \chi_D))=\int_{\mathbb{R}}\phi(x)W_{\text{SO(even)}}(x)dx,
\end{align}
when $\hat{\phi}$ is supported in $(-1, 1)$. 

Observe that if $\deg(N_E)$ is even, and hence $m$ is even, then the twists from $\mathcal{H}_{n}^{\Delta, +}$ have $\text{SO}(\text{even})$ symmetry, and the twists from $\mathcal{H}_{n}^{\Delta, -}$ have $\text{SO}(\text{odd})$ symmetry. When $\deg(N_E)$ is odd, and hence $m$ is odd, the twists from $\mathcal{H}_{n}^{\Delta,-}$ have $\text{SO}(\text{odd})$ symmetry, and the twists from $\mathcal{H}_{n}^{\Delta,+}$ have $\text{SO}(\text{even})$ symmetry.
Therefore, the twists are taken from $\mathcal{H}_{n}^{\Delta, +}$ always have $\text{SO}(\text{even})$ symmetry.

As an application of \cite[Theorem 2]{CCM} and \eqref{one level density o}, we have the following corollary.


\begin{cor}\label{no-vanishing result o}
	Let $\mathfrak{H}_{\text{SO(even)}, 2\pi}$ be the normed vector space of entire functions $h$ of exponential type at most $2\pi$ with finite norm. That is, 
	\[
	||h||_{ \CMcal{H}_{\text{SO(even)}, 2\pi}}
	:=||h||_{L^2(\mathbb{R}, W_{\text{SO(even)}})}=\left(\int_{\mathbb{R}}|h(x)|^2 W_{\text{SO(even)}}(x)dx\right)^{1/2}<\infty.
	\]
Let $k = K_{\text{SO(even)}, 2\pi}$ be the reproducing kernel of the Hilbert space associated to $\mathfrak{H}_{\text{SO(even)}, 2\pi}$, determined by Case $(ii)$ of \cite[Theorem 5]{CCM}. Then 
	\[
	\liminf_{n\to \infty}\frac{1}{| \CMcal{H}_{n}^{\Delta, +}|}\sum_{\substack{D\in  \CMcal{H}_{n}^{\Delta, +}\\ L\left(\frac12 +\frac{2\pi i}{\log q}\frac{\alpha}{n}, E\otimes \chi_D\right)\neq 0}}1\geq 1-\frac{1}{k(\alpha, \alpha)+|k(\alpha, -\alpha)|}.
	\] 
\end{cor}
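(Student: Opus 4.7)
The plan is to obtain Corollary \ref{no-vanishing result o} as a direct application of \cite[Theorem 2]{CCM} to the one-level density formula \eqref{one level density o}, exactly paralleling the way Corollary \ref{no-vanishing result} was deduced from \eqref{one level density} in the symplectic case. The only new inputs are (i) the correct identification of the symmetry type attached to $\mathcal{H}_n^{\Delta,+}$, which by the discussion immediately before the corollary is $\mathrm{SO}(\mathrm{even})$, and (ii) the correct reproducing kernel, which is furnished by case (ii) of \cite[Theorem 5]{CCM} applied to the density $W_{\mathrm{SO}(\mathrm{even})}$.

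The first step is to translate the nonvanishing event into one about low lying zeros. Using the polynomial factorization $\mathcal{L}(u, E\otimes \chi_D)=\prod_{j=1}^m(1-\alpha_j q^{1/2-s})$ with $\alpha_j = e(-\theta_{j}(E\otimes \chi_D))$, the condition $L\bigl(\tfrac12+i\tfrac{2\pi\alpha}{m\log q},E\otimes \chi_D\bigr)\neq 0$ is equivalent to $\theta_{j}(E\otimes \chi_D)\neq -\alpha/m$ for every $j$, i.e.\ none of the scaled eigenphases $m\theta_{j}(E\otimes\chi_D)$ equals $-\alpha$. Thus lower-bounding the proportion of nonvanishing is equivalent to lower-bounding the proportion of $D\in\mathcal{H}_n^{\Delta,+}$ whose scaled low lying zeros avoid the point $-\alpha$.

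The second step is to feed \eqref{one level density o} into the CCM recipe. The support condition $\mathrm{supp}(\hat\phi)\subset(-1,1)$ in \eqref{one level density o} is precisely the hypothesis required to invoke \cite[Theorem 2]{CCM} with parameter $\delta=2\pi$ (exponential type $2\pi$ corresponds to Fourier support in $(-1,1)$). Applying the extremal one-sided majorant construction from that theorem to a test function peaked at the point $-\alpha$, and using the symmetry $W_{\mathrm{SO}(\mathrm{even})}(-x)=W_{\mathrm{SO}(\mathrm{even})}(x)$ to combine the contributions at $\pm\alpha$, yields the general bound
\[
\liminf_{m\to\infty}\frac{1}{|\mathcal{H}_n^{\Delta,+}|}\#\bigl\{D\in\mathcal{H}_n^{\Delta,+}: L\bigl(\tfrac12+\tfrac{2\pi i\alpha}{m\log q},E\otimes\chi_D\bigr)\neq 0\bigr\}\;\geq\; 1-\frac{1}{k(\alpha,\alpha)+|k(\alpha,-\alpha)|},
\]
with $k=K_{\mathrm{SO}(\mathrm{even}),2\pi}$ the reproducing kernel, whose explicit form is identified from case (ii) of \cite[Theorem 5]{CCM}.

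There is no substantial obstacle beyond bookkeeping: the non-trivial analytic work has already been done in \cite{CCM} and in the derivation of \eqref{one level density o} from \cite[Corollary 10.1]{CL}. The main care point is to check the normalizations, in particular that the scaling $m\theta_{j}$ in \eqref{one level density o} matches the choice of height $\tfrac{2\pi\alpha}{m\log q}$ so that the evaluation point of the kernel is indeed $\alpha$ (and $-\alpha$ after folding via the functional equation), and that $m\to\infty$ can be used interchangeably with $n\to\infty$ by \eqref{m and n connection for elliptic curve}. The limit $\lim_{\alpha\to 0^+}r_E(\alpha)=\tfrac14$ claimed in Theorem \ref{unconditional result for orthogonal family} then emerges from the explicit form of $K_{\mathrm{SO}(\mathrm{even}),2\pi}(0,0)$ worked out in \cite{CCM}.
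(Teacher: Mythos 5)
Your proposal matches the paper's approach exactly: the paper also obtains this corollary as a direct application of \cite[Theorem 2]{CCM} to the one-level density formula \eqref{one level density o}, using the $\mathrm{SO}(\mathrm{even})$ symmetry type identified from the preceding discussion and the reproducing kernel from \cite[Theorem 5, case (ii)]{CCM}. Your additional care about the normalization of the scaled eigenphase and the interchangeability of $m\to\infty$ and $n\to\infty$ via \eqref{m and n connection for elliptic curve} is correct and in fact fills in bookkeeping that the paper leaves implicit.
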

For positive real $\alpha$, we set 
\begin{align}\label{r_E}	
r_E(\alpha) :=1-\frac{1}{k(\alpha, \alpha)+|k(\alpha, -\alpha)|}.
\end{align}
Then the maximum value of $r_E(\alpha)$ is $1/4$ which is attained at $\alpha=0$, and its limit as $\alpha\to \infty$ is $0$. We refer the reader to~\cite[Figure 3]{CCM} for more details.
\begin{proof}[Proof of Theorem \ref{unconditional result for orthogonal family}]
Corollary \ref{no-vanishing result o} provides an $r_E(\alpha)$ proportion of the polynomials $D\in  \CMcal{H}_{n}^{\Delta, +}$ for which $L(1/2, E\otimes \chi_D)\neq 0$. Using this and Proposition \ref{logL-as-PX}, the proof follows similarly to the proof of Theorem \ref{mild conditioning lower bound}.
\end{proof}


\subsection{Upper Bounds in Theorems \ref{mild conditioning lower bound} and \ref{unconditional result for orthogonal family}}
Following the computation in \eqref{eq:at sigma0 at onehalf}, we obtain 
\begin{align*}
&\log \big|L\big(\tfrac{1}{2}+it, \mathcal{F}\big)\big|
- \log \big|L\big(\sigma_0+it, \mathcal{F}\big)\big|\\
&=\frac{\kappa_\cF (\sigma_0-\frac12) \log q}{2} -\frac{1}{2}\sum_{j=1}^{\kappa_\cF}
\log\bigg(\frac{q^{\frac12-\sigma_0}+q^{\sigma_0-\frac12} -2\cos(2\pi\theta_{j}+t\log q)}{2-2 \cos(2\pi\theta_{j}+t\log q)}\bigg)+O_{\eta_\cF}(1),
\end{align*}
where $\theta_j:=\theta_{j, D}(\mathcal{F})$'s are as defined in \eqref{eigen phases}.
Note that the summands in the second term are all nonnegative which leads to the negativity of the second term of the above expression. Therefore, 
\[
\log \big|L\big(\tfrac{1}{2}+it, \mathcal{F}\big)\big|
\leq \log \big|L\big(\sigma_0+it, \mathcal{F}\big)\big|+ \frac{\kappa_\cF (\sigma_0-\frac12) \log q}{2}+O_{\eta_\cF}(1).
\]
From the hypothesis that $\kappa_\cF \big(\sigma_0-\tfrac12\big)=o\big(\sqrt{\log(\kappa_\cF)}\big)$, we obtain
\[
\frac{\log \big|L\big(\tfrac{1}{2}+it, \mathcal{F}\big)\big|}{\sqrt{\log(\kappa_\cF)}}
\leq \frac{\log \big|L\big(\sigma_0+it, \mathcal{F}\big)\big|}{\sqrt{\log(\kappa_\cF)}}+ o(1).
\]
Hence, the proof of Theorem~\ref{mild conditioning lower bound} is completed using Theorem~\ref{Unconditional CLT for real near to half line:re}. 
The proof of Theorem~\ref{unconditional result for orthogonal family} proceeds in the same way, relying on the arguments in Section~\ref{proof of main theorems}, after establishing an analogue of Theorem~\ref{mild conditioning lower bound} for the orthogonal family.


\subsection{Proof of Corollary \ref{mesoscopic fluctuation}}
For a fixed $\delta\in [0, 1)$, let  
\[
\theta_{j}=\frac{\alpha_j}{\kappa^\delta}, \quad \vec{u}=(\theta_1, \theta_2), \quad \vec{v}=(\theta_3, \theta_4), \quad \vec{a}=(1, -1) \quad \text{for } \, j\in \{1, 2, 3, 4\}.
\]
With these choices, we have for the symplectic family
\[
\mathcal{V}_{\operatorname{Im}} (\vec{a}, \vec{u}, n)
=\mathcal{V}_{\operatorname{Re}}(\vec{a}, \vec{v}, n)=\sqrt{(1-\delta)\log n},
\]
and for the orthogonal family
\[
\mathcal{V}_{\operatorname{Im}}(\vec{a}, \vec{u}, m)
=\mathcal{V}_{\operatorname{Re}}(\vec{a}, \vec{v}, m)=\sqrt{(1-\delta)\log m}.
\]
By applying Theorem \ref{main theorem conditional mesoscopic and macroscopic} or Theorem \ref{Uncon mes or micro} according to the family, we conclude that the stochastic process $\mathcal{W}$ converges to a Gaussian process as $\kappa_\cF \to \infty$. For the covariance computation, we follow the approach in Section~\ref{subsection 6.2}. We find that, up to a negligible error term,
\begin{align*}
\operatorname{Cov}(\Delta(\theta_1, \theta_2, \mathcal{F}), \Delta(\theta_3, \theta_4, \mathcal{F}))
&=\mathbb{E}\left(\Delta(\theta_1, \theta_2, \mathcal{F}) \Delta(\theta_3, \theta_4, \mathcal{F})\right)-\mathbb{E}\left(\Delta(\theta_1, \theta_2, \mathcal{F})\right)\mathbb{E}\left(\Delta(\theta_3, \theta_4, \mathcal{F})\right)\\
&= \frac{1}{2}\bigg(\log \min\Big\{\kappa_\cF, \tfrac{1}{|\theta_1 -\theta_3|}\Big\}-\log \min\Big\{\kappa_\cF, \tfrac{1}{|\theta_1 +\theta_3|}\Big\}
\\
&\quad \quad \quad+\log \min\Big\{\kappa_\cF, \tfrac{1}{|\theta_2 -\theta_4|}\Big\}
 -\log \min\Big\{\kappa_\cF, \tfrac{1}{|\theta_2 +\theta_4|}\Big\}
 \\
&\quad \quad   \quad -\log \min\Big\{\kappa_\cF, \tfrac{1}{|\theta_1 -\theta_4|}\Big\}+\log \min\Big\{\kappa, \tfrac{1}{|\theta_1 +\theta_4|}\Big\}
\\
&\quad \quad \quad-\log \min\Big\{\kappa_\cF, \tfrac{1}{|\theta_2 -\theta_3|}\Big\}+\log \min\Big\{\kappa_\cF, \tfrac{1}{|\theta_2+\theta_3|}\Big\}\bigg),
\end{align*}
since $\mathbb{E}\left(\Delta(\theta_1, \theta_2, \mathcal{F})\right)
=\mathbb{E}\left(\Delta(\theta_3, \theta_3, \mathcal{F})\right)=0$.

With the choices of $\theta_j$'s as specified in the hypothesis, we complete the proof.


\section*{Acknowledgments}
 
The first author was partially supported by a postdoctoral fellowship of the Pacific Institute for the Mathematical Sciences. The second author acknowledges financial support from the Australian Research Council Grant DP230100534 and the Max Planck Institute for Mathematics, Bonn. The authors all thank Jon P. Keating for his quick comments on this paper and also Paul Bourgade for suggesting a study of the FHK conjecture as in Section \ref{subsubsec:future}.


\bibliographystyle{amsplain}

\end{document}